\newfont{\footsc}{cmcsc10 at 8truept}
\newfont{\footbf}{cmbx10 at 8truept}
\newfont{\footrm}{cmr10 at 10truept}
\renewcommand{\ps@plain}{%
\renewcommand{\@oddfoot}{\footsc {\footbf }  \footrm\thepage}}
\makeatother \pagestyle{plain} \leftmargin=25mm
\newtheorem{thm}{Theorem}[section]
\newtheorem{cor}[thm]{Corollary}
\newtheorem{lem}[thm]{Lemma}
\newtheorem{prop}[thm]{Proposition}
\newtheorem{prob}{Problem}
\newtheorem{conj}{Conjecture}
\theoremstyle{definition}
\newtheorem{defn}[thm]{Definition}
\newtheorem{case}{Case}
\theoremstyle{remark}
\newtheorem{rem}[thm]{Remark}
\title{On the classification of lattice polytopes via affine equivalence\footnotemark[2]\footnotetext[2]{This work is supported by the National Natural Science Foundation of China (NSFC12226006, NSFC11921001), the National Key
Research and Development Program of China (2018-YFA0704701) and China Scholarship Council.}}
\date{}
\author{Zhanyuan Cai\textsuperscript{1}, Yuqin Zhang\textsuperscript{2} and Qiuyue Liu\textsuperscript{1,}\thanks{Corresponding author. E-mail: 17864309562@163.com} \\  
		{\small $^{1}$Center for Applied Mathematics, Tianjin University, 300072, Tianjin, China}\\
		{\small $^{2}$School of Mathematics, Tianjin University, Tianjin, 300072, China}\\
  }
\begin{document}
	\maketitle
\begin{abstract}
In 1980, V. I. Arnold studied the classification problem for convex lattice polygons of a given area. Since then, this problem and its analogues have been studied by many authors, including Bárány, Lagarias, Pach, Santos, Ziegler and Zong. Despite extensive study, the structure of the representative sets in the classifications remains unclear, indicating a need for refined classification methods. In this paper, we propose a novel classification framework based on affine equivalence, which offers a fresh perspective on the problem. Our approach yields several classification results that extend and complement Bárány's work on volume and Zong's work on cardinality. These new results provide a more nuanced understanding of the structure of the representative set, offering deeper insights into the classification problem.\\[3mm]
\textbf{Keywords}:\  Lattice polytopes, classification, unimodular equivalence, affine equivalence
\\[3mm]
\textbf{2010 MSC}:\  52B20, 52C07
\end{abstract}

\maketitle
\section{Introduction}
Convex lattice polytopes in $\mathbb{E}^{d}$ are convex hulls of finite subsets of the integer lattice $\mathbb{Z}^{d}$. As usual, let $P$ denote a $d$-dimensional convex lattice polytope, $|P|$ the cardinality of $P$, $f_{0}(P)$ the number of vertices of $P$, and $\mathrm{vol}(P)$ the volume of $P$. For further details on polytopes and lattice polytopes, we refer to  \cite{Ziegler} and \cite{Gruber}.

This paper focuses on two primary concepts: unimodular equivalence and affine equivalence. Two $d$-dimensional convex lattice polytopes $P_{1}$ and $P_{2}$ are said to be \textit{unimodular equivalent} if there exists a unimodular transformation $\sigma$, i.e. an affine transformation with determinant $\pm1$ and preserving $\mathbb{Z}^{d}$, such that $\sigma(P_{1})=P_{2}$. It is straightforward to see that if $P_1$ is unimodular equivalent to $P_2$, then
\begin{equation*}
    |P_1|=|P_2|,\ f_0(P_1)=f_0(P_2)\ \text{and}\ \mathrm{vol}(P_1)=\mathrm{vol}(P_2).
\end{equation*}

Alternatively, we say $P_1$ and $P_2$ are \textit{affinely equivalent} if there exists an affine transformation $\sigma'$ such that $\sigma'(P_1)=P_2$. Both unimodular equivalence and affine equivalence are equivalence relations, meaning convex lattice polytopes can be categorized into different classes based on these relations. However, a significant distinction between them is that many invariants under unimodular equivalence, such as cardinality and volume, are not necessarily preserved under affine equivalence. This difference will lead to challenges in verifying affine equivalence.

By triangulations, it can be shown that
\begin{equation*}
    d!\cdot v(P)\in \mathbb{Z}
\end{equation*}
for any $d$-dimensional convex lattice polytope $P$. Let $v(d,m)$ denote the number of distinct classes of $d$-dimensional convex lattice polytopes with volume at most $m/d!$, where both $d$ and $m$ are positive integers. Let $f(d,m)$ and $g(d,m)$ be functions of positive integers $d$ and $m$. In this paper, $f(d,m)\ll g(d,m)$ means that
\begin{equation*}
    f(d,m)\leqslant c_{d}\cdot g(d,m)
\end{equation*}
holds for all positive integers $m$ and a fixed positive integer $d$, where $c_{d}$ is a suitable constant that depends only on $d$.

Next, we introduce Arnold's question and summarize research related to the classification of lattice polytopes. In 1980, V. I. Arnold \cite{arnol1980statistics} studied the values of $v(2,m)$ and proved that
\begin{align*}
    m^{\frac{1}{3}}\ll \log v(2,m)\ll  m^{\frac{1}{3}}\log m,
\end{align*}
holds for sufficiently large $m$.  In 1991, Lagarias and Ziegler \cite{Lagarias} established that the number of equivalence classes of $d$-dimensional lattice polytopes with bounded volume is finite. In 1992, $\mathrm{B\acute{a}r\acute{a}ny}$ and Pach \cite{barany} improved Arnold’s upper bound to 
\begin{equation*}
\log v(2,m)\ll m^{\frac{1}{3}}. 
\end{equation*}
Furthermore, Bárány and Vershik \cite{Vershik} derived 
\begin{equation}\label{improvedbound1}
    \log v(d,m)\ll m^{\frac{d-1}{d+1}},
\end{equation}
and Bárány \cite{Barany1} obtained
\begin{equation}\label{improvedbound2}
    \log v(d,m)\gg m^{\frac{d-1}{d+1}},
\end{equation}
for any fixed $d$ and sufficiently large $m$.

Let $\kappa(d,w)$ denote the number of distinct classes of $d$-dimensional convex lattice polytopes $P$ with $|P|=w$. In 2011, Liu and Zong \cite{liu2011} addressed  the classification problem for convex lattice polytopes with given cardinality and proved
\begin{equation*}
    \begin{gathered}
        w^{\frac{1}{3}}\ll \log \kappa(2,w)\ll  w^{\frac{1}{3}},\\
        \kappa(d,w)=\infty,\ if\ w\geqslant d+1\geqslant 4.
    \end{gathered}
\end{equation*}
They also provided relevant results regarding Arnold’s problem for centrally symmetric lattice polygons in \cite{liu2011}.

Blanco and Santos \cite{Santos1} demonstrated the finiteness result for $3$-dimensional lattice polytopes of width larger than one and with a given number of lattice points, and performed an explicit enumeration for up to $11$ lattice points \cite{Santos, Blanco}.

By now, all the above asymptotic results are about the order of the number of representatives, rather than the exact limits, even for logarithmic values. Consequently, it remains essential to further analyze the structure of the set of representatives under unimodular equivalence. In this paper, we use affine equivalence as a tool to pursue this analysis.

In this paper, we introduce new generalizations of Arnold's question, with the goal of deepening our understanding of the structure of representatives under unimodular equivalence, particularly for lattice polytopes with the same volume, within a convex body, or under a combination of both conditions.

Specifically, in section 2, we provide a necessary and sufficient condition for affine equivalence, serving as a characterization of this equivalence. Verifying whether two lattice polytopes are affinely equivalent is more challenging than verifying unimodular equivalence, as the invariants under affine equivalence are fewer. The characterization proposed in this paper can lead to a more efficient algorithm for determining affine equivalence between lattice polytopes.

In section 3, we establish a new Arnold-type question based on affine equivalence and prove the right order of the logarithm of the number of representatives under the new definition related to affine equivalence, similar to the forms in \eqref{improvedbound1} and \eqref{improvedbound2}.

In Section 4, we explore the relationship between affine equivalence and unimodular equivalence, proposing a method to subdivide the set of representatives under unimodular equivalence by affine equivalence. These discussions offer a new perspective for addressing representatives under unimodular equivalence.

In section 5, we introduce another Arnold-type theorem involving a convex body. Furthermore, constraints of volume and convex body for lattice polytopes.

In section 6, we present a set of complete invariants for unimodular equivalence in $\mathbb{Z}^2$. Investigating complete invariants for unimodular equivalence is valuable for understanding this relation from a geometric standpoint.

\section{Characterization of affine equivalence}

Still, we let $B^d_r$ denote $n$-dimensional ball with radius $r$.
\begin{defn}
    Let $P_1$ and $P_2$ be $d$-dimensional convex lattice polytopes. If there exists an affine transformation $\tau$ such that $\tau(P_1)=P_1A+\mathbf{v}=P_2$, where $A\in\mathbb{R}^{d\times d}$ and $\mathbf{v}\in\mathbb{R}^d$, then we say $P_1$ and $P_2$ are affinely equivalent.
\end{defn}

Now we consider an invariant borrowed from \cite{Santos1} which can be useful for verifying whether two lattice polytopes are unimodular or affinely equivalent.

\begin{defn}[cf. \cite{Santos1}]\label{defvolvec}
Let $A = \{p_1 \cdots p_n\}$, where $n \geqslant d+1$, be a finite set of points in $\mathbb{Z}^d$. The volume vector of $A$ is defined as
\begin{equation*}
    w = (w_{i_1,...,i_{d+1}}), \quad i_1,...,i_{d+1} \in \mathbb{Z}^{n\choose d+1},
\end{equation*}
where
\begin{equation*}
    w_{i_1,...,i_{d+1}} = \det 
    \begin{pmatrix}
        1 & \cdots & 1\\
        p_{i_1} & \cdots & p_{i_{d+1}}
    \end{pmatrix}.
\end{equation*}
Suppose $w = kw'$, where $k$ is an integer and $w'$ is a primitive vector. Then $w'$ is called the primitive volume vector of $A$.
\end{defn}

The following proposition from M. Blanco and F. Santos demonstrates a strong connection between the volume vector and unimodular equivalence. 

\begin{prop}[cf. \cite{Santos1}]\label{santosvolvec}
    Let $A = \{p_1 \cdots p_n\}$ and $B = \{q_1 \cdots q_n\}$ be two $d$-dimensional finite subsets in $\mathbb{Z}^d$. Suppose that $A$ and $B$ have the same volume vector under some specific order of points in $A$ and $B$. Then there exists a unimodular affine transformation between $A$ and $B$, and when the components of the volume vectors of $A$ and $B$ are coprime, there exists a unimodular transformation between $A$ and $B$.
\end{prop}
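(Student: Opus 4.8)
The plan is to recast the hypothesis as a statement about maximal minors of matrices. For $p\in\mathbb Z^d$ write $\widehat p=(1,p)\in\mathbb Z^{d+1}$, and let $\widehat A$ and $\widehat B$ be the $(d+1)\times n$ matrices whose columns are $\widehat p_1,\dots,\widehat p_n$ and $\widehat q_1,\dots,\widehat q_n$. By Definition~\ref{defvolvec}, the entry $w_{i_1,\dots,i_{d+1}}$ is exactly the $(d+1)\times(d+1)$ minor of $\widehat A$ on columns $i_1,\dots,i_{d+1}$, so the assumption says precisely that $\widehat A$ and $\widehat B$ have the same vector of maximal minors. Since $A$ is $d$-dimensional the columns of $\widehat A$ span $\mathbb R^{d+1}$, so some maximal minor is nonzero, and the corresponding minor of $\widehat B$ is nonzero too; in particular $\widehat B$ also has rank $d+1$.

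The first step is the rigidity fact: two $(d+1)\times n$ matrices of rank $d+1$ with equal vectors of maximal minors satisfy $\widehat B=g\widehat A$ for some $g\in\mathrm{GL}_{d+1}(\mathbb R)$ with $\det g=1$. I would prove this directly. Permute the points of $A$ and $B$ by a common permutation so that a common nonzero minor sits on the first $d+1$ columns; let $S,S'$ be the corresponding invertible $(d+1)\times(d+1)$ blocks of $\widehat A,\widehat B$ and set $g:=S'S^{-1}$, so $\det g=\det S'/\det S=1$. For a column $\widehat p_j$ with $j>d+1$, Cramer's rule expresses its coordinates in the column basis of $S$ as quotients of maximal minors of $\widehat A$; the same quotients, computed from the equal maximal minors of $\widehat B$, give the coordinates of $\widehat q_j$ in the column basis of $S'$, whence $g\widehat p_j=\widehat q_j$, and so $\widehat B=g\widehat A$. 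Now compare first rows: writing the first row of $g$ as $(a,\mathbf c^{\top})$, the first row of $g\widehat A$ is $a\mathbf 1^{\top}+\mathbf c^{\top}[p_1\,\cdots\,p_n]$, and equating it with the all-ones first row of $\widehat B$ gives $a+\langle\mathbf c,p_i\rangle=1$ for all $i$; since the $p_i$ affinely span $\mathbb R^d$ this forces $\mathbf c=\mathbf 0$ and $a=1$. Hence $g$ is block lower triangular with blocks $1$ and a $d\times d$ matrix $C$ with $\det C=\det g=1$, and reading off rows $2,\dots,d+1$ of $\widehat B=g\widehat A$ yields $q_i=Cp_i+\mathbf b$ for all $i$, where $\mathbf b\in\mathbb R^d$ is the lower-left block of $g$. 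Thus $\tau(x)=Cx+\mathbf b$ is an affine transformation with $\det C=1$ and $\tau(A)=B$, which is the claimed unimodular affine transformation.

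For the stronger conclusion I would track lattices. Being affine, $\tau$ carries the affine lattice $\Lambda_A:=\{\sum_i\lambda_i p_i:\lambda_i\in\mathbb Z,\ \sum_i\lambda_i=1\}$ bijectively onto $\Lambda_B$, and $\tau^{-1}$ does the reverse. The key arithmetic claim is that the greatest common divisor of the components of the volume vector of $A$ equals the index $[\mathbb Z^d:\Lambda_A]$: subtracting the first column inside each determinant and expanding along the first row shows that the components $w_{1,j_2,\dots,j_{d+1}}$ are exactly the ``based'' determinants $\det(p_{j_2}-p_1,\dots,p_{j_{d+1}}-p_1)$ and that every other component is a $\pm1$-combination of these; the gcd of the based determinants is the $d$-th determinantal divisor of a matrix whose columns generate the linear lattice $\Lambda_A-p_1$, which by the Smith normal form is $[\mathbb Z^d:\Lambda_A]$. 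Since the volume vectors of $A$ and $B$ coincide, $[\mathbb Z^d:\Lambda_A]=[\mathbb Z^d:\Lambda_B]$; if the components are coprime this common index is $1$, so $\Lambda_A=\Lambda_B=\mathbb Z^d$. Then $\tau(\mathbb Z^d)=\tau(\Lambda_A)=\Lambda_B=\mathbb Z^d$, so $\tau$ is an invertible affine self-map of $\mathbb R^d$ with $\det C=1$ carrying $\mathbb Z^d$ onto $\mathbb Z^d$, i.e.\ a unimodular transformation, with $\tau(A)=B$.

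The routine parts are the rigidity step (once a nonzero minor is fixed it is just Cramer's rule) and the passage from $g$ to an affine map (forced by the all-ones row). I expect the main obstacle to be the arithmetic identity in the last paragraph: one must check carefully that the gcd over \emph{all} components of the volume vector, not just those based at one point, is the lattice index, and confirm that this identification and the bijection $\tau\colon\Lambda_A\to\Lambda_B$ survive for configurations in degenerate position (repeated points, or affine dependencies beyond the single basis selected). These are bookkeeping issues rather than genuine difficulties, but they are where the care is needed.
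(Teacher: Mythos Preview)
Your argument is correct. Note, however, that the paper does not supply its own proof of this proposition: it is quoted from Blanco--Santos \cite{Santos1} and used as a black box, so there is no in-paper proof to compare against. That said, your method is exactly the one the paper deploys nearby. The sufficiency half of Theorem~\ref{thmaffineiff} is your Cramer's-rule step in slightly different clothing: it fixes an affinely independent $(d{+}1)$-tuple, reads the barycentric coordinates $a^k_j$ of every other point as ratios of volume-vector components, and concludes that the affine map determined on the basis extends correctly to all points. Your reformulation via $\widehat A,\widehat B$ and Pl\"ucker-type rigidity is a cleaner packaging of the same idea, and it also delivers $\det C=1$ immediately.

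For the second claim, the paper's Remark after Theorem~\ref{thmaffineiff} records exactly the fact you isolate, again attributing it to \cite{Santos1}: primitivity of the volume vector forces the affine sublattice $L(P)$ to equal $\mathbb Z^d$. Your justification is right in spirit but can be tightened: rather than arguing that the components not based at $p_1$ are ``$\pm1$-combinations'' of the based ones, observe that subtracting the first column of $\widehat A$ from the others (an integral column operation, so gcd-preserving on maximal minors) leaves a matrix whose only nonzero maximal minors are precisely the $d\times d$ minors of $[\,p_2-p_1,\dots,p_n-p_1\,]$; by Smith normal form their gcd is $[\mathbb Z^d:\Lambda_A]$. That disposes of the bookkeeping you flag at the end.
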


Inspired by the proposition above, we conjectured and proved the following theorem, concerning affine equivalence.

\begin{thm}\label{thmaffineiff}
    Two $d$-dimensional convex lattice polytopes $P$ and $Q$ are affinely equivalent if and only if $P$ and $Q$ have the same number of vertices, and after properly ordering the vertices of $P$ and $Q$, the primitive volume vectors of the vertex sets of $P$ and $Q$ are equal.
\end{thm}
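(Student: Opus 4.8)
The plan is to prove the two implications separately; the forward implication is straightforward and the converse is where the work lies. For necessity, suppose $\tau(P)=Q$ for an affine transformation $\tau$. Since $P$ and $Q$ are both $d$-dimensional, $\tau$ is an affine isomorphism and hence preserves the face lattice, so it restricts to a bijection between the vertex sets; in particular $P$ and $Q$ have the same number $n$ of vertices, and if $p_1,\dots,p_n$ are the vertices of $P$ then $q_i:=\tau(p_i)$ are the vertices of $Q$. Writing $\tau$ as $x\mapsto Ax+\mathbf v$ with $\det A\neq 0$, for each $(d+1)$-subset $\{i_1,\dots,i_{d+1}\}$ we have
\[
\begin{pmatrix}1&\cdots&1\\ q_{i_1}&\cdots&q_{i_{d+1}}\end{pmatrix}=\begin{pmatrix}1&\mathbf 0^{\top}\\ \mathbf v&A\end{pmatrix}\begin{pmatrix}1&\cdots&1\\ p_{i_1}&\cdots&p_{i_{d+1}}\end{pmatrix},
\]
and taking determinants gives $w(Q)=\det(A)\,w(P)$ for the volume vectors. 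Thus $w(P)$ and $w(Q)$ are $\mathbb{R}$-proportional nonzero integer vectors, so their primitive volume vectors — being primitive integer vectors proportional to them — agree up to an overall sign, which is exactly the freedom left open by Definition~\ref{defvolvec}; under any fixed sign normalization for primitive vectors they are therefore equal.

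For sufficiency, assume $P$ and $Q$ have the same number of vertices and that, after fixing labellings $p_1,\dots,p_n$ and $q_1,\dots,q_n$, their primitive volume vectors coincide. Then $w(P)$ and $w(Q)$ are scalar multiples of one primitive vector, so $w(Q)=\lambda\,w(P)$ for some $\lambda\in\mathbb{R}\setminus\{0\}$. Choose $d+1$ affinely independent vertices $p_{j_1},\dots,p_{j_{d+1}}$ of $P$ (possible since $\dim P=d$); then $w_{j_1,\dots,j_{d+1}}(P)\neq 0$, hence $w_{j_1,\dots,j_{d+1}}(Q)=\lambda\,w_{j_1,\dots,j_{d+1}}(P)\neq 0$, so $q_{j_1},\dots,q_{j_{d+1}}$ are affinely independent as well. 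Let $\tau$ be the unique affine map carrying the tuple $(p_{j_1},\dots,p_{j_{d+1}})$ to $(q_{j_1},\dots,q_{j_{d+1}})$; it is an affine isomorphism of the required form, and it remains to verify $\tau(p_m)=q_m$ for each remaining vertex $p_m$. Expanding in barycentric coordinates, $p_m=\sum_{l=1}^{d+1}\mu_l\,p_{j_l}$ with $\sum_l\mu_l=1$, Cramer's rule gives $\mu_l=\det(M_l)/\det(M)$, where $M=\left(\begin{smallmatrix}1&\cdots&1\\ p_{j_1}&\cdots&p_{j_{d+1}}\end{smallmatrix}\right)$ and $M_l$ is $M$ with its $l$-th column replaced by $\left(\begin{smallmatrix}1\\ p_m\end{smallmatrix}\right)$. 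Now $\det M=w_{j_1,\dots,j_{d+1}}(P)$ and $\det M_l=\varepsilon_l\,w_{S_l}(P)$, where $S_l=(\{j_1,\dots,j_{d+1}\}\setminus\{j_l\})\cup\{m\}$ and $\varepsilon_l\in\{\pm1\}$ depends only on the positions of $m,j_1,\dots,j_{d+1}$ in $\{1,\dots,n\}$. Running the identical computation for $Q$ and using $w(Q)=\lambda\,w(P)$, the factor $\lambda$ and the signs $\varepsilon_l$ cancel in each ratio, so the barycentric coordinates of $q_m$ relative to $(q_{j_1},\dots,q_{j_{d+1}})$ are again $\mu_1,\dots,\mu_{d+1}$. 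Since $\tau$ is affine and $\sum_l\mu_l=1$,
\[
\tau(p_m)=\tau\!\Big(\textstyle\sum_l\mu_l\,p_{j_l}\Big)=\sum_l\mu_l\,\tau(p_{j_l})=\sum_l\mu_l\,q_{j_l}=q_m,
\]
so $\tau$ maps the vertex set of $P$ bijectively onto that of $Q$, and therefore $\tau(P)=\mathrm{conv}\{\tau(p_1),\dots,\tau(p_n)\}=\mathrm{conv}\{q_1,\dots,q_n\}=Q$.

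I expect the main obstacle to be the sufficiency direction — specifically, the point that an affine map determined by only $d+1$ affinely independent vertices is automatically forced to send every remaining vertex to its counterpart. The idea that unlocks this is that barycentric coordinates are ratios of components of the volume vector (via Cramer's rule), so the $\mathbb{R}$-proportionality of $w(P)$ and $w(Q)$ — which is precisely what equality of the primitive volume vectors delivers — forces all these ratios, hence all the vertex images, to agree. The only delicate point is keeping track of the signs $\varepsilon_l$ introduced by whatever convention is used to order the $(d+1)$-subsets indexing the volume vector, but these are harmless because they are the same for $P$ and $Q$. Note also that, unlike Proposition~\ref{santosvolvec}, no coprimality hypothesis on the components is needed here, since we only require a real affine map rather than a unimodular one.
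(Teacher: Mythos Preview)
Your proof is correct and follows essentially the same approach as the paper: both directions rest on the observation that barycentric coordinates of any vertex with respect to an affinely independent $(d{+}1)$-tuple are ratios of components of the volume vector, so proportionality of the volume vectors forces the affine map determined on the base tuple to match all remaining vertices. Your necessity argument via the block-matrix identity $w(Q)=\det(A)\,w(P)$ is in fact a bit cleaner than the paper's, which expands everything through affine combinations before reaching the same proportionality conclusion.
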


\begin{proof}
    Necessity: Suppose that the $d$-dimensional polyhedra $P$ and $Q$ are affinely equivalent. Then clearly, $P$ and $Q$ have the same number of vertices, and the affine transformation between $P$ and $Q$ provides a one-to-one mapping between their vertex sets. Specifically, there exists an affine transformation $\varphi$ such that $\varphi(p_i) = q_i$, for $i = 1, \cdots, n$, where $n = |\mathrm{vert}(P)| = |\mathrm{vert}(Q)|$, and $\mathrm{vert}(P) = \{p_1, \cdots, p_n\}$, $\mathrm{vert}(Q) = \{q_1, \cdots, q_n\}$. Let the volume vectors of $P$ and $Q$ with this vertex ordering be
    \begin{equation*}
        w^P = (w^P_{i_1,...,i_{d+1}}),\ w^Q = (w^Q_{i_1,...,i_{d+1}})\quad i_1,...,i_{d+1}\in \mathbb{Z}^{n\choose d+1}.
    \end{equation*}
    Without loss of generality, assume that $\{p_1, \cdots, p_{d+1}\}$ and $\{q_1, \cdots, q_{d+1}\}$ are affinely independent, otherwise, adjust the positions of $p_i$ and $q_i$ accordingly. Define
    \begin{equation*}
        \mathrm{det}_P \triangleq \det 
    \begin{pmatrix}
        1 & \cdots & 1\\
        p_1 & \cdots & p_{d+1}
    \end{pmatrix},\ \mathrm{det}_Q \triangleq \det 
    \begin{pmatrix}
        1 & \cdots & 1\\
        q_1 & \cdots & q_{d+1}
    \end{pmatrix}.
    \end{equation*}
    Suppose
    \begin{gather*}
        p_i = a_1^i p_1 + \cdots + a_{d+1}^i p_{d+1},\ a_1^i + \cdots + a_{d+1}^i = 1;\\
        q_i = b_1^i q_1 + \cdots + b_{d+1}^i q_{d+1},\ b_1^i + \cdots + b_{d+1}^i = 1;\\
        i = 1, \cdots, n.
    \end{gather*}
    Since $\varphi(p_i) = q_i$, we have
    \begin{equation*}
        a_j^i = b_j^i, \quad i = 1, \cdots, n, \ j = 1, \cdots, d+1.
    \end{equation*}
    Thus:
    \begin{align*}
        w^P_{i_1,...,i_{d+1}} &= \det 
    \begin{pmatrix}
        1 & \cdots & 1\\
        p_{i_1} & \cdots & p_{i_{d+1}}
    \end{pmatrix}\\
    &= \sum_{(k_1,\cdots,k_{d+1}) \in \mathrm{perm}(1,\cdots,d+1)} a^{i_1}_{k_1} a^{i_2}_{k_2} \cdots a^{i_{d+1}}_{k_{d+1}} \cdot \mathrm{det}_P\\
    &= \mathrm{det}_P \cdot
    \sum_{(k_1,\cdots,k_{d+1}) \in \mathrm{perm}(1,\cdots,d+1)} a^{i_1}_{k_1} a^{i_2}_{k_2} \cdots a^{i_{d+1}}_{k_{d+1}}\\
    &\triangleq \mathrm{det}_P
    \cdot \alpha_{i_1,...,i_{d+1}}.
    \end{align*}
    where $\mathrm{perm}(1,\cdots,d+1)$ denotes the set of all permutations of $\{1,\cdots,d+1\}$. Similarly:
    \begin{align*}
        w^Q_{i_1,...,i_{d+1}} &= \mathrm{det}_Q \cdot
    \sum_{(k_1,\cdots,k_{d+1}) \in \mathrm{perm}(1,\cdots,d+1)} b^{i_1}_{k_1} b^{i_2}_{k_2} \cdots b^{i_{d+1}}_{k_{d+1}}\\
    &= \mathrm{det}_Q \cdot
    \sum_{(k_1,\cdots,k_{d+1}) \in \mathrm{perm}(1,\cdots,d+1)} a^{i_1}_{k_1} a^{i_2}_{k_2} \cdots a^{i_{d+1}}_{k_{d+1}}\\
    &= \mathrm{det}_Q
    \cdot \alpha_{i_1,...,i_{d+1}}.
    \end{align*}
    Thus,
    \begin{align*}
        w^P = (w^P_{i_1,...,i_{d+1}}) = \mathrm{det}_P \cdot (\alpha_{i_1,...,i_{d+1}}),\\
        w^Q = (w^Q_{i_1,...,i_{d+1}}) = \mathrm{det}_Q \cdot (\alpha_{i_1,...,i_{d+1}}).
    \end{align*}
    Therefore, $\mathrm{det}_P \cdot w^Q = \mathrm{det}_Q \cdot w^P$, and since both $\mathrm{det}_P$ and $\mathrm{det}_Q$ are integers, this proves the result.

    Sufficiency: Suppose $|\mathrm{vert}(P)| = n = |\mathrm{vert}(Q)|$, and $\mathrm{vert}(P) = \{p_1, \cdots, p_n\}$, $\mathrm{vert}(Q) = \{q_1, \cdots, q_n\}$. Let the volume vectors of $P$ and $Q$ with this vertex ordering be
    \begin{equation*}
        w^P=(w^P_{i_1,...,i_{d+1}}),\ w^Q=(w^Q_{i_1,...,i_{d+1}})\quad i_1,...,i_{d+1}\in \mathbb{Z}^{n\choose d+1},
    \end{equation*}
    and there exist $x$ and $y$ such that $xw^P=yw^Q$. Without loss of generality, assume that \(\{p_1, \cdots, p_{d+1}\}\) and \(\{q_1, \cdots, q_{d+1}\}\) are both affinely independent; otherwise, adjust the positions of \(p_i\) and \(q_i\) simultaneously. Then, the mapping \(p_i \mapsto q_i\) for \(i = 1, \cdots, d+1\) will uniquely determine an affine transformation, denoted by \(\varphi\). Suppose
    \begin{equation}\label{fangshebiaoshi}
        \begin{gathered}
        p_{i} = a_1^{i}p_1 + \cdots + a_{d+1}^{i}p_{d+1}, \quad a_1^i + \cdots + a_{d+1}^i = 1;\\
        q_{i} = b_1^{i}q_1 + \cdots + b_{d+1}^{i}q_{d+1}, \quad b_1^i + \cdots + b_{d+1}^i = 1;\\
        i = 1, \cdots, n.
        \end{gathered}
    \end{equation}
    
    What remains to be shown is
    \begin{equation*}
        a_j^k = b_j^k, \quad k = 1, \cdots, n, \quad j = 1, \cdots, d+1.
    \end{equation*}
    For \(k = 1, \cdots, d+1\), the above equation clearly holds. Now, let \(k > d+1\). Consider
    \begin{align*}
        w^P_{1, \cdots, k, \cdots, d+1} &= \det 
    \begin{pmatrix}
        1 & \cdots & 1 & \cdots & 1\\
        p_1 & \cdots & p_k & \cdots & p_{d+1}
    \end{pmatrix},\\
        w^Q_{1, \cdots, k, \cdots, d+1} &= \det 
    \begin{pmatrix}
        1 & \cdots & 1 & \cdots & 1\\
        q_1 & \cdots & q_k & \cdots & q_{d+1}
    \end{pmatrix},
    \end{align*}
    where \(k\) occupies the \(j\)-th position in \(\{1, \cdots, d+1\}\), meaning that \(k\) replaces \(j\), where \(1 \leq j \leq d+1\). According to equation (\ref{fangshebiaoshi}), we have
    \begin{equation*}
        w^P_{1, \cdots, k, \cdots, d+1} = a^k_j \cdot \det\begin{pmatrix}
        1 & \cdots & 1\\
        p_1 & \cdots & p_{d+1}
    \end{pmatrix}, \quad w^Q_{1, \cdots, k, \cdots, d+1} = b^k_j \cdot \det\begin{pmatrix}
        1 & \cdots & 1\\
        q_1 & \cdots & q_{d+1}
    \end{pmatrix}.
    \end{equation*}
    Since \(xw^P = yw^Q\), it follows that
    \begin{equation*}
        x \cdot \det\begin{pmatrix}
        1 & \cdots & 1\\
        p_1 & \cdots & p_{d+1}
    \end{pmatrix} = y \cdot \det\begin{pmatrix}
        1 & \cdots & 1\\
        q_1 & \cdots & q_{d+1}
    \end{pmatrix}, \quad x \cdot w^P_{1, \cdots, k, \cdots, d+1} = y \cdot w^Q_{1, \cdots, k, \cdots, d+1},
    \end{equation*}
    which immediately implies \(a^k_j = b^k_j\) for \(k = 1, \cdots, n\) and \(j = 1, \cdots, d+1\). Therefore, by the property of affine transformations preserving affine combinations, we have \(\varphi(p_i) = q_i\) for \(i = 1, \cdots, n\).
\end{proof}

\begin{rem}
    \cite{Santos1} tells us that if a lattice polytope $P$ has a volume vector $w^P$ which is primitive, then the affine sublattice generated by $P$, say $L(P)$, will be $\mathbb{Z}^d$. But we are curious about the question in another direction: If $L(P)=\mathbb{Z}^d$, can we say that $w^P$ is primitive? The answer to this question will lead to further understanding of possible affine relationships between different vertices of a lattice polytope: Given any integer vector, how can we determine if it can be the volume vector of a lattice polytope?
\end{rem}

Theorem \ref{thmaffineiff} will lead to an algorithmic improvement. Testing the affine equivalence of two lattice polytopes is more challenging than testing unimodular equivalence, as several invariants under unimodular transformations, such as volume and the number of lattice points, may change under affine transformations. To the authors' knowledge, the only known method to test affine equivalence involves exhaustively considering every pair of simplices separately within the two polytopes and then all possible affine transformations.

However, based on Theorem \ref{thmaffineiff}, to verify the affine equivalence of two lattice polytopes $P$ and $Q$, we can first calculate the primitive volume vectors $w^P$ and $w^Q$ according to an arbitrary order of vertices, keeping track of which vertices correspond to each component of $w^P$ and $w^Q$.

Secondly, arrange the components of $w^P$ and $w^Q$ in ascending or descending order. If the ordered components are not identical, then $P$ and $Q$ cannot be affinely equivalent. If they are identical and assuming $P$ and $Q$ are affinely equivalent, we know that the simplex corresponding to the first component in $w^P$ can only be transformed into one of the simplices corresponding to the components equal to the first in $w^Q$ (or $w^P$). This approach significantly reduces the number of potential correspondence relationships that need to be tested.

\section{Arnold-type question associated with affine equivalence}
In the introduction, we mentioned Arnold's question, which asks the number of representatives under unimodular equivalence with the same volume. It is natural to consider classifying convex lattice polytopes according to other types of equivalence, such as affine equivalence. However, affine transformations generally alter the volume of lattice polytopes, so simply substituting unimodular equivalence with affine equivalence in Arnold's question does not yield a meaningful new question. To refine the problem, we introduce the following lemma.

\begin{lem}\label{Vminiff}
    Let $P$ be a convex lattice polytope in $d$-dimensional space. Denote the set of convex lattice polytopes affinely equivalent to $P$ by $\mathcal{A}(P)$. Then:
    \begin{enumerate}
        \item For any affine equivalence class $\mathcal{A}(P)$, the following value exists and can be attained by some $Q \in \mathcal{A}(P)$:
        \begin{equation*}
            V_{min}(\mathcal{A}(P))\coloneqq \min\{\mathrm{vol}(P):\ P\in\mathcal{A}(P)\}.
        \end{equation*}
        \item For any affine equivalence class $\mathcal{A}(P)$, the polytope $Q \in \mathcal{A}(P)$ achieves $V_{min}(\mathcal{A}(P))$  if and only if the affine sublattice generated by $Q$ is $\mathbb{Z}^d$.
    \end{enumerate}
\end{lem}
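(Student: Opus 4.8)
\emph{Proof plan.} The plan is to reduce the whole statement to one identity about full-rank sublattices of $\mathbb{Z}^d$. For a $d$-dimensional lattice polytope $Q$ with vertices $p_1,\dots,p_n\in\mathbb{Z}^d$, let $\Lambda_Q\subseteq\mathbb{Z}^d$ be the sublattice generated by the differences $p_2-p_1,\dots,p_n-p_1$. Since $Q$ is $d$-dimensional this lattice has rank $d$, so $k_Q\coloneqq[\mathbb{Z}^d:\Lambda_Q]$ is a well-defined positive integer; the affine sublattice generated by $Q$ is $p_1+\Lambda_Q$, and because $p_1\in\mathbb{Z}^d$ this equals $\mathbb{Z}^d$ exactly when $\Lambda_Q=\mathbb{Z}^d$, i.e. when $k_Q=1$. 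I will show that $\mathrm{vol}(Q)/k_Q$ is the same for all $Q\in\mathcal{A}(P)$ and that this common value is exactly $V_{min}(\mathcal{A}(P))$.

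First I would pin down which affine images of a given $Q\in\mathcal{A}(P)$ are again (necessarily $d$-dimensional) lattice polytopes. Writing an affine map as $\tau(x)=Ax+\mathbf{v}$ with $A\in\mathbb{R}^{d\times d}$, a non-invertible $A$ yields a lower-dimensional image and hence nothing in $\mathcal{A}(P)$, so we may assume $\det A\neq 0$. Then $\tau(Q)$ is a lattice polytope if and only if $A\Lambda_Q\subseteq\mathbb{Z}^d$ and $Ap_1+\mathbf{v}\in\mathbb{Z}^d$; the second condition can always be arranged by a suitable translation once $A$ is fixed, so the only genuine constraint on $A$ is $A\Lambda_Q\subseteq\mathbb{Z}^d$, and in that case $\Lambda_{\tau(Q)}=A\Lambda_Q$ and $\mathrm{vol}(\tau(Q))=|\det A|\cdot\mathrm{vol}(Q)$.

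The heart of the argument is then the covolume computation: if $\det A\neq 0$ and $A\Lambda_Q\subseteq\mathbb{Z}^d$, then $A\Lambda_Q$ is a full-rank sublattice of $\mathbb{Z}^d$, so $[\mathbb{Z}^d:A\Lambda_Q]=\mathrm{covol}(A\Lambda_Q)=|\det A|\cdot\mathrm{covol}(\Lambda_Q)=|\det A|\cdot k_Q$ is a positive integer. Hence $|\det A|\geqslant 1/k_Q$, with equality if and only if $A\Lambda_Q=\mathbb{Z}^d$; substituting into $\mathrm{vol}(\tau(Q))=|\det A|\cdot\mathrm{vol}(Q)$ shows that every lattice polytope affinely equivalent to $Q$ has volume at least $\mathrm{vol}(Q)/k_Q$, with equality precisely when $\Lambda_{\tau(Q)}=\mathbb{Z}^d$, i.e. when the affine sublattice generated by $\tau(Q)$ is $\mathbb{Z}^d$. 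To see the bound is attained, take a $\mathbb{Z}$-basis $u_1,\dots,u_d$ of $\Lambda_Q$ and let $A$ be the linear map with $Au_i=e_i$; then $A\Lambda_Q=\mathbb{Z}^d$, $|\det A|=1/k_Q$, and $\tau(x)=A(x-p_1)$ exhibits a member of $\mathcal{A}(P)$ of volume $\mathrm{vol}(Q)/k_Q$. Applying the same covolume identity to an arbitrary $Q'=\tau(Q)\in\mathcal{A}(P)$ gives $\mathrm{vol}(Q')/k_{Q'}=\mathrm{vol}(Q)/k_Q$, so this quantity depends only on the class; this establishes part (1), and part (2) follows at once, since $Q\in\mathcal{A}(P)$ attains $V_{min}(\mathcal{A}(P))=\mathrm{vol}(Q)/k_Q$ if and only if $k_Q=1$, i.e. if and only if the affine sublattice generated by $Q$ is $\mathbb{Z}^d$.

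The only nonroutine ingredient is the index identity $[\mathbb{Z}^d:A\Lambda_Q]=|\det A|\,[\mathbb{Z}^d:\Lambda_Q]$ together with the translation of ``$\tau(Q)$ is a lattice polytope'' into ``$A\Lambda_Q\subseteq\mathbb{Z}^d$''; everything else is bookkeeping. The points requiring a little care are that $\mathcal{A}(P)$ consists only of $d$-dimensional polytopes, so that non-invertible $A$ never occur, and that the affine sublattice generated by $Q$ equals $\mathbb{Z}^d$ exactly when its linear part $\Lambda_Q$ does, which uses that the vertices of $Q$ already lie in $\mathbb{Z}^d$.
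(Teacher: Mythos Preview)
Your proof is correct and takes a genuinely different route from the paper's. The paper argues part~(1) abstractly: since every $d$-dimensional lattice polytope has volume a multiple of $1/d!$, the set $\{\mathrm{vol}(Q):Q\in\mathcal{A}(P)\}$ is a subset of a closed discrete set, so it attains its minimum. For part~(2) the paper gives two separate contrapositive arguments, in each direction comparing $Q$ with some other member of $\mathcal{A}(P)$ and deriving a contradiction from the inequality of covolumes of the associated affine sublattices.

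Your approach is more quantitative and unified: you identify the single invariant $\mathrm{vol}(Q)/k_Q$ of the class, prove it equals $V_{\min}(\mathcal{A}(P))$ via the index identity $[\mathbb{Z}^d:A\Lambda_Q]=|\det A|\cdot[\mathbb{Z}^d:\Lambda_Q]$, and read off both existence and the characterization at once. This buys you an explicit formula for $V_{\min}$ that the paper never states, and it avoids the compactness step and the two separate contradiction arguments. The paper's approach, by contrast, keeps the lattice-theoretic input to a minimum (it never invokes the index formula explicitly) at the cost of a slightly longer and less informative argument.
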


\begin{proof}
    To prove the first statement, observe that the set of volumes of convex lattice polytopes in $\mathcal{A}(P)$ forms a subset of all multiples of $1/d!$, which is a discrete, closed set in $\mathbb{R}$. Considering the set
    \begin{equation*}
        \{\mathrm{vol}(P) : P \in \mathcal{A}(P)\} \cap \left[\frac{1}{d!}, \mathrm{vol}(P)\right],
    \end{equation*}
    we obtain a compact set, and the conclusion follows immediately.

    Now, for the sufficiency part of the second assertion, denote the affine sublattice generated by a given lattice polytope $Q$ as $L(Q)$. Suppose the volume of $Q \in \mathcal{A}(P)$ is strictly greater than $V_{min}(\mathcal{A}(P))$, and that $Q'$ attains $V_{min}(\mathcal{A}(P))$. Then, there exists an affine transformation $\phi$ that transforms $Q$ to $Q'$, and consequently, $L(Q)$ to $L(Q')$. Since both $Q$ and $Q'$ are lattice polytopes, $L(Q)$ and $L(Q')$ are affine sublattices of $\mathbb{Z}^d$. Let $M_{\phi}$ denote the matrix corresponding to $\phi$. Given that $\mathrm{vol}(Q) > \mathrm{vol}(Q')$, we have $\det(M_{\phi}) < 1$, implying $L(Q') < L(Q)$, which contradicts the assumption that $L(Q) = \mathbb{Z}^d$ and $L(Q') \subseteq \mathbb{Z}^d = L(Q)$.

    Conversely, suppose that for a convex polytope $Q$, $L(Q)$ (as defined above) is not equal to $\mathbb{Z}^d$. Then there exists an affine transformation $\varphi$ such that $\varphi(L(Q)) = \mathbb{Z}^d$. Since $\det(L(Q)) > \det(\mathbb{Z}^d)$, the matrix corresponding to $\varphi$, $M_{\varphi}$, will have a determinant strictly smaller than 1. Noting that $\varphi(L(Q)) = \mathbb{Z}^d$ and $\mathrm{vert}(Q) \subseteq L(Q)$, where $\mathrm{vert}(Q)$ is the set of vertices of $Q$, we have $\varphi(\mathrm{vert}(Q)) \subseteq \mathbb{Z}^d$. Thus, $\varphi(Q)$ is also a convex lattice polytope affinely equivalent to $Q$, and 
    \begin{equation*}
        \mathrm{vol}(\varphi(Q)) = \det(M_{\varphi}) \cdot \mathrm{vol}(Q) < \mathrm{vol}(Q),
    \end{equation*}
    which contradicts the assumption that $Q$ attains $V_{min}(\mathcal{A}(P))$. Therefore, the necessity part is proved.
\end{proof}

With this lemma in place, we can refine the question mentioned at the beginning of this section as follows:

\begin{center}
    \textit{Given sufficiently large $V$, how many affine equivalence classes $\mathcal{A}(P)$ are there such that $V_{min}(\mathcal{A}(P))=V$?}
\end{center}

We define
\begin{equation*}
    a(d,m)\coloneqq |\{\mathcal{A}(P):\ V_{min}(\mathcal{A}(P))=m/d!\}|,
\end{equation*}
and establish that
\begin{equation*}
    \log a(d,m)\leqslant \log v(d,m)\ll m^{\frac{d-1}{d+1}}.
\end{equation*}
This follows because if we select one representative with volume $m/d!$ from each class in $\{\mathcal{A}(P): V_{min}(\mathcal{A}(P)) = m/d!\}$, forming a set of representatives $rep(d,m)$, then no two lattice polytopes $Q_1, Q_2 \in rep(d,m)$ are affinely equivalent, or particularly unimodular equivalent.

The main task of this part is to prove the following theorem:
\begin{thm}\label{lowerboundadm}
    \begin{equation*}
        m^{\frac{d-1}{d+1}}\ll \log a(d,m).
    \end{equation*}
\end{thm}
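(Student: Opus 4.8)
The plan is to transfer the known lower bound $\log v(d,m)\gg m^{(d-1)/(d+1)}$ of Bárány (see \eqref{improvedbound2}) to the quantity $a(d,m)$ by showing that a large family of unimodular-inequivalent lattice polytopes of controlled volume can be chosen so that each one simultaneously has generated affine sublattice equal to $\mathbb{Z}^d$ and realizes the minimal volume in its own affine class at a \emph{prescribed} value. Concretely, I would start from Bárány's construction, which produces $\exp(c_d\, m^{(d-1)/(d+1)})$ pairwise non-unimodularly-equivalent $d$-dimensional lattice polytopes with volume $\leqslant m/d!$; the key classical fact is that these polytopes are obtained as convex hulls of lattice points lying densely on the boundary of a fixed convex body scaled to have volume of order $m$, so each such polytope already contains a unimodular simplex (equivalently, $L(P)=\mathbb{Z}^d$). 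By Lemma~\ref{Vminiff}(2), every such $P$ already attains $V_{min}(\mathcal{A}(P))$, so $\mathrm{vol}(P)=V_{min}(\mathcal{A}(P))$.

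Next I would organize these polytopes according to the value $m'=d!\cdot V_{min}(\mathcal{A}(P))\leqslant m$. Summing the counts over $m'\leqslant m$ of the number of affine classes with $V_{min}=m'/d!$ that arise this way gives
\begin{equation*}
    \sum_{m'\leqslant m} a(d,m') \;\gg\; \exp\!\bigl(c_d\, m^{(d-1)/(d+1)}\bigr),
\end{equation*}
provided the map $P\mapsto \mathcal{A}(P)$ is not too far from injective on Bárány's family. Here the crucial reduction is that two polytopes in Bárány's family lie in the same affine class if and only if they are affinely equivalent, and since each has $L=\mathbb{Z}^d$ and realizes its minimal volume, an affine equivalence between them would have to be volume-preserving, hence given by a matrix of determinant $\pm1$ preserving $\mathbb{Z}^d$, i.e.\ a unimodular equivalence (this uses the same determinant argument as in the proof of Lemma~\ref{Vminiff}, together with Proposition~\ref{santosvolvec} or Theorem~\ref{thmaffineiff} to upgrade affine-equivalence-with-$L=\mathbb{Z}^d$ to unimodular equivalence). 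Thus distinct unimodular classes in Bárány's family give distinct affine classes, and the left-hand sum above is a genuine lower bound for $\sum_{m'\leqslant m} a(d,m')$.

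From $\sum_{m'\leqslant m} a(d,m') \gg \exp(c_d\, m^{(d-1)/(d+1)})$ I would conclude the pointwise bound by a pigeonhole/dyadic argument: since there are only $m$ terms in the sum, some $m'\leqslant m$ satisfies $a(d,m')\gg \tfrac{1}{m}\exp(c_d\, m^{(d-1)/(d+1)})$, whence $\log a(d,m')\gg m^{(d-1)/(d+1)}-\log m \gg m^{(d-1)/(d+1)}$; but $m'$ may be smaller than $m$, so to land exactly at the index $m$ I need a monotonicity or near-monotonicity statement for $a(d,\cdot)$, e.g.\ that $a(d,m)$ is non-decreasing up to a $d$-dependent factor, or better, I would instead run Bárány's construction directly at a scale where every boundary-lattice-point polytope has volume \emph{exactly} comparable to $m$ (not merely $\leqslant m$), so that all the classes produced have $V_{min}$ within a bounded multiplicative window of $m/d!$, and then absorb the window into the constant $c_d$. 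I expect the main obstacle to be precisely this last point — ensuring that enough of Bárány's polytopes have minimal volume pinned near the top value $m/d!$ rather than spread thinly across all smaller values — which I would handle by revisiting his construction and choosing the underlying convex body and lattice refinement so that the vertex count, and hence the combinatorial multiplicity, is already $\exp(c_d\,m^{(d-1)/(d+1)})$ among polytopes of volume in $[\tfrac{m}{2d!},\tfrac{m}{d!}]$, all of which automatically satisfy $L=\mathbb{Z}^d$ and therefore realize their own $V_{min}$.
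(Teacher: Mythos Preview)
Your overall strategy is sound, and your reduction of affine equivalence to unimodular equivalence (when both polytopes have $L=\mathbb{Z}^d$ and the same volume) is correct and in fact cleaner than the paper's argument, which instead bounds the number of affine maps between lattice polytopes in a fixed box. However, there are two genuine gaps, and they are exactly the places where the paper does real work.

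First, the assertion that B\'ar\'any's polytopes ``already contain a unimodular simplex, equivalently $L(P)=\mathbb{Z}^d$'' confuses two different things. The affine sublattice $L(P)$ is generated by the \emph{vertices} of $P$, not by $P\cap\mathbb{Z}^d$; an affine map sending $P$ to another lattice polytope need only send vertices to lattice points. In B\'ar\'any's construction one starts from $Q_r=2\,\mathrm{conv}(B_r^d\cap\mathbb{R}^d_+\cap\mathbb{Z}^d)$, whose vertices are all in $2\mathbb{Z}^d$, and then shaves some of them; after cutting the corner simplex $\mathrm{conv}\{\mathbf{0},\mathbf{e}_1,\dots,t\mathbf{e}_d\}$ to hit the exact volume, the new vertices are $\mathbf{e}_1,\dots,\mathbf{e}_{d-1},t\mathbf{e}_d$. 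When $t$ is even (including $t=0$) the vertex set generates only an index-$2$ sublattice, so $L(P')\neq\mathbb{Z}^d$ and $P'$ does \emph{not} realize $V_{min}$. The paper repairs this by first enlarging $Q_r$ to a carefully designed $S_r$ that carries extra vertices guaranteeing $\mathbf{e}_2,\dots,\mathbf{e}_d$ as vertex differences (Lemmas \ref{Zdiff} and \ref{SrandQr}), so that after the corner cut with $t\ge 1$ one also picks up $\mathbf{e}_1-\mathbf{e}_2$ and hence a full basis.

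Second, your endgame does not land at the prescribed value $m$. A pigeonhole over a volume window yields $\log a(d,m')\gg m^{(d-1)/(d+1)}$ only for \emph{some} $m'$ in the window, and there is no monotonicity of $a(d,\cdot)$ to upgrade this to every $m$. ``Absorbing the window into $c_d$'' would work for $a'(d,m)$ but not for $a(d,m)$. The paper avoids this by using B\'ar\'any's exact-volume trick (the corner cut), combined with the $S_r$ modification above so that the cut does not destroy $L(P')=\mathbb{Z}^d$; this produces $\exp(c\,m^{(d-1)/(d+1)})$ polytopes with $V_{min}$ exactly $m/d!$ for every sufficiently large $m$.
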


Let 
\begin{equation*}
    a'(d,m)\coloneqq|\{\mathcal{A}(P):\ V_{min}(\mathcal{A}(P))\leqslant m/d!\}|.
\end{equation*}
By Theorem \ref{lowerboundadm}, we have the following corollary.
\begin{cor}
    \begin{equation*}
        m^{\frac{d-1}{d+1}}\ll \log a'(d,m)\leqslant \log v(d,m)\ll m^{\frac{d-1}{d+1}}.
    \end{equation*}
\end{cor}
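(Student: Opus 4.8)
The plan is to verify the four-term chain piece by piece, treating Theorem \ref{lowerboundadm} and the Bárány--Vershik bound \eqref{improvedbound1} as the two external inputs, and supplying only the two comparison inequalities that link $a(d,m)$, $a'(d,m)$ and $v(d,m)$.

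First I would dispose of the leftmost inequality. By definition every affine equivalence class with $V_{min}(\mathcal{A}(P)) = m/d!$ also satisfies $V_{min}(\mathcal{A}(P)) \leqslant m/d!$, so the counting gives $a(d,m) \leqslant a'(d,m)$ and hence $\log a(d,m) \leqslant \log a'(d,m)$. Combining this with Theorem \ref{lowerboundadm}, which asserts $m^{(d-1)/(d+1)} \ll \log a(d,m)$, immediately yields $m^{(d-1)/(d+1)} \ll \log a'(d,m)$.

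Second, for the middle inequality $\log a'(d,m) \leqslant \log v(d,m)$ it suffices to produce an injection from the affine classes counted by $a'(d,m)$ into the unimodular classes counted by $v(d,m)$. For each affine class $\mathcal{A}(P)$ with $V_{min}(\mathcal{A}(P)) \leqslant m/d!$, the existence part of Lemma \ref{Vminiff} furnishes a representative $Q \in \mathcal{A}(P)$ attaining $V_{min}$, so that $\mathrm{vol}(Q) = V_{min}(\mathcal{A}(P)) \leqslant m/d!$; to the class I assign the unimodular equivalence class of $Q$. If two distinct affine classes were sent to the same unimodular class, their chosen representatives would be unimodular equivalent, hence affinely equivalent, contradicting that they lie in different affine classes. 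The assignment is therefore injective, and since every image is a unimodular class possessing a representative of volume at most $m/d!$, it is counted by $v(d,m)$. Thus $a'(d,m) \leqslant v(d,m)$, and taking logarithms gives the middle inequality.

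Finally, the rightmost inequality $\log v(d,m) \ll m^{(d-1)/(d+1)}$ is precisely \eqref{improvedbound1}, so no additional work is needed. The only genuinely new content is concentrated in the second step, and the point to check carefully there is that the map from an affine class to the unimodular class of its minimal-volume representative is both well defined and injective; this rests on the fact that unimodular equivalence is finer than affine equivalence together with the existence assertion of Lemma \ref{Vminiff}. I do not anticipate a real obstacle once Theorem \ref{lowerboundadm} is granted, since the upper and lower extremal bounds then match up to the constant factors hidden in $\ll$.
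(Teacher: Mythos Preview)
Your proposal is correct and follows exactly the route the paper intends: the paper states the corollary as an immediate consequence of Theorem~\ref{lowerboundadm} together with the inequality $\log a(d,m)\leqslant \log v(d,m)$ already justified just before that theorem, and your three-step breakdown (the trivial inclusion $a(d,m)\leqslant a'(d,m)$, the injection of affine classes into unimodular classes via minimal-volume representatives from Lemma~\ref{Vminiff}, and the Bárány--Vershik bound~\eqref{improvedbound1}) simply spells out those implicit steps.
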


The proof strategy of Theorem \ref{lowerboundadm} is as follows: First, we follow the construction attributed to I. Bárány \cite{Barany1} to generate a sufficiently large number of distinct convex lattice polytopes with volume $m/d!$. Then, we slightly modify each such polytope to ensure they achieve the $V_{min}$ values for their respective classes. Finally, we show that the volume change caused by this modification can be corrected using Bárány's structure. To make this section self-contained, we will also outline Bárány's construction here.

The starting point of Bárány's construction is the convex hull of all integer points in a ball. Let $\mathbb{R}^d_{+}\coloneqq \{(x_1,\dots,x_d) \in \mathbb{R}^d : x_i \geqslant 0 \text{ for all } i\}$. For a convex lattice polytope $Q$ and a vertex $x$ of $Q$, define
\begin{equation*}
    \bigtriangleup_Q(x)=Q\backslash \mathrm{conv}((Q\cap\mathbb{Z}^d)\backslash\{x\}).
\end{equation*}
Let $\mathrm{vert}(Q)$ denote the set of vertices of $Q$. We summarise the main conclusion in \cite{Barany1} as the following proposition.
\begin{prop}[cf. \cite{Barany1}]\label{baranysconstruction}
    For a sufficiently large $V$, if a convex lattice polytope $Q$ in $d$-dimensional space satisfying:
    \begin{enumerate}
        \item $f_0(Q)\gg V^{\frac{d-1}{d+1}}$,
        \item $0\leqslant\mathrm{vol}(Q)-V-\frac{1}{5d!}f_0(Q)\ll V^{\frac{d-1}{2d}}$,
    \end{enumerate}
    then there is a vertices set $X$ of $Q$ containing more than 52 percent of vertices of $Q$, such that for any subset of $X$, denoted by $X'$, not containing the origin and satisfying $|X'|=\lfloor\frac{1}{2}f_0(Q)\rfloor$, there exists a constant $c$ only depending on $d$, such that for $Q$, there are more than $\exp{\{cV^{(d-1)/(d+1)}\}}$ subsets $W$ of $X'$ satisfying
    \begin{equation*}
        \mathrm{vol}\left(Q\backslash\bigcup_{x\in W}\bigtriangleup_{Q}(x)\right)\in\left[V,V+\frac{b}{d!}\right],
    \end{equation*}
    where $b$ is a constant only depending on $d$. We denote the set of all such $Q\backslash\bigcup_{x\in W}\bigtriangleup_{Q}(x)$ by $N_Q$.
\end{prop}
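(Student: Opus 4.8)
This proposition is B\'ar\'any's construction from \cite{Barany1} in repackaged form, so the plan is to outline his argument; it rests on a geometric estimate for the vertex caps of a ``round'' lattice polytope together with a soft subset-sum count. The model to keep in mind is $Q=\mathrm{conv}(\mathbb{Z}^d\cap\rho B^d_1)$ with the radius $\rho$ tuned so that hypotheses (1)--(2) hold; for it the Andrews / B\'ar\'any--Larman bounds give $f_0(Q)=\Theta(\rho^{d(d-1)/(d+1)})=\Theta(\mathrm{vol}(Q)^{(d-1)/(d+1)})$, and a general $Q$ obeying (1)--(2) is treated identically, the only structural input being that $Q$ is squeezed between two concentric balls at bounded Hausdorff distance.

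The first step is to record two facts about the cap $\bigtriangleup_Q(x)$ at a vertex $x$. (i) Its only lattice point is $x$, since $x$ is the unique lattice point of $Q$ outside $\mathrm{conv}((Q\cap\mathbb{Z}^d)\setminus\{x\})$; hence each cap is a small sliver clinging to its vertex, caps at distinct vertices are pairwise disjoint, and for any vertex set $W$ one obtains the additive identity $\mathrm{vol}(Q\setminus\bigcup_{x\in W}\bigtriangleup_Q(x))=\mathrm{vol}(Q)-\sum_{x\in W}\mathrm{vol}(\bigtriangleup_Q(x))$. (ii) $w_x:=d!\,\mathrm{vol}(\bigtriangleup_Q(x))$ is a positive integer (a difference of two normalized lattice volumes, strictly positive because $x$ is a genuine vertex), and for at least $52\%$ of the vertices it is bounded, $w_x\in\{1,\dots,b_0\}$ with $b_0=b_0(d)$. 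For the last point I would show that hypothesis (2) forces the total cap volume $\sum_x w_x$ to be $\Theta(f_0(Q))$, so the average cap is $O(1)$, and then discard as the ``bad'' minority the vertices with $w_x>b_0$, of which there are fewer than $0.48\,f_0(Q)$ by Markov once $b_0$ is a large enough multiple of the average; the surviving vertices form the set $X$. Establishing (ii) -- that the round polytope really spends only $O(1)$ of volume per vertex -- is the part of B\'ar\'any's facet-structure analysis that is genuinely technical, and it is the step I expect to be the main obstacle; everything after it is bookkeeping.

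Granting (i)--(ii): choose $X'\subseteq X$ with $0\notin X'$ and $|X'|=\lfloor\tfrac12 f_0(Q)\rfloor=:n$ (possible since $|X|>0.52\,f_0(Q)$), and put $T:=d!\,(\mathrm{vol}(Q)-V)$, a nonnegative integer equal by (2) to $\tfrac15 f_0(Q)+O(V^{(d-1)/(2d)})$, so $T/n=\tfrac25+o(1)$. By (i), a subset $W\subseteq X'$ produces a polytope of volume in $[V,V+b/d!]$ exactly when $\sum_{x\in W}w_x\in[T-b,T]$, so it suffices to exhibit $\ge\exp\{cn\}$ such $W$. Take the cardinality $k$ so that the mean $\tfrac kn\sum_{x\in X'}w_x$ lies within $b_0$ of $T$ -- possible because $1\le\tfrac1n\sum_{x\in X'}w_x\le b_0$ and $0\le T\le\sum_{x\in X'}w_x$ -- and observe that $k/n$ is then bounded away from $0$ and from $1$. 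A local-limit (anticoncentration) estimate for $\sum_{x\in W}w_x$ over a uniformly random $k$-subset $W$ of $X'$ -- sampling without replacement from a multiset of bounded positive integers: trivial when all $w_x$ coincide, and otherwise a standard coefficient bound for $\prod_{x\in X'}(1+yz^{w_x})$ -- shows that every integer near the mean is attained with probability $\ge 1/\mathrm{poly}(n)$. Choosing $b=b(d)$ large enough to absorb the $O(b_0)$ gap between the mean and $T$ together with the discreteness, the window $[T-b,T]$ then contains $\gg\binom nk/\mathrm{poly}(n)\ge\exp\{cn\}$ admissible subsets $W$ (using $\binom nk\ge\exp\{c_0 n\}$ for $k/n$ bounded away from $0,1$). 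Since $n=\lfloor\tfrac12 f_0(Q)\rfloor\gg V^{(d-1)/(d+1)}$, this is exactly the assertion, with $N_Q$ the resulting family.
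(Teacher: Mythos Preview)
The paper gives no proof of this proposition at all: it is introduced with ``We summarise the main conclusion in \cite{Barany1} as the following proposition'' and then simply cited. So there is nothing in the paper to compare your argument against; the authors are quoting B\'ar\'any's result as a black box and then applying it to their modified polytope $S_r$.

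That said, two steps in your outline do not go through as written. First, pairwise disjointness of the caps $\bigtriangleup_Q(x)$ does not follow from ``its only lattice point is $x$''; two caps can in principle share a lattice-point-free region (think of a Reeve-type tetrahedron, where every cap is essentially the whole body). In B\'ar\'any's paper the disjointness is obtained from the doubling $Q_r=2\,\mathrm{conv}(B^d_r\cap\mathbb{R}^d_+\cap\mathbb{Z}^d)$: for $Q=2P$ the midpoint of any two vertices is a lattice point, which localises each cap near its own vertex. Second, hypothesis~(2) does not bound the total cap volume $\sum_x w_x$; it only pins down $\mathrm{vol}(Q)-V$ in terms of $f_0(Q)$, and $\mathrm{vol}(Q)-V$ is the amount you \emph{want} to shave, not the amount \emph{available} in the caps. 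The bound $w_x=O_d(1)$ for a positive fraction of vertices is a separate geometric fact about the specific round polytope, proved in \cite{Barany1} via the facet/cap structure of $\mathrm{conv}(B^d_r\cap\mathbb{Z}^d)$ (and the Reisner--Sch\"utt--Werner result cited in the bibliography is the source of the $52\%$). In short, the proposition as the authors state it for an abstract $Q$ satisfying (1)--(2) is really a summary of what B\'ar\'any proves for the concrete $Q_r$, and both of the inputs you need---cap disjointness and bounded cap volumes---come from that concrete construction rather than from hypotheses (1)--(2) themselves. Your subset-sum counting in the last paragraph is fine once those inputs are in hand.
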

In \cite{Barany1}, Bárány proved that for every sufficiently large $V$, one can always choose a suitable $r$, such that  $Q_r\coloneqq2\mathrm{conv}(B_r^d\cap\mathbb{R}^d_+\cap\mathbb{Z}^d)$ can satisfy the two conditions in Proposition \ref{baranysconstruction}, then he get a set of enough distinct convex lattice polytopes with a volume of nearly $V$. For any polytope $P\in N_{Q_r}$, there is a $t$, $0\leqslant t\leqslant b$, such that $\mathrm{vol}(P)=V+t/d!$. So one can take $P'=P\backslash \mathrm{conv}\{\mathbf{0},\mathbf{e}_1,\cdots,t\mathbf{e}_d\}$ instead of $P$ to obtain convex lattice polytopes with exact volume $V$.

We also need the following lemma in \cite{Barany1}.
\begin{lem}[cf. \cite{Barany1}]\label{Qrvertlem}
    $B_{r-\sqrt{d}}^d\subseteq \mathrm{conv}(B_r^d\cap\mathbb{Z}^d)$.
\end{lem}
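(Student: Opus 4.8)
The inclusion is a purely geometric (lattice-covering) fact, and the plan is to exhibit, for each point of the smaller ball, an explicit small set of lattice points of $B^d_r$ whose convex hull contains it — namely the $2^d$ vertices of the unit cube cell in which the point sits. Concretely, fix $x=(x_1,\dots,x_d)\in B^d_{r-\sqrt d}$, i.e. $\|x\|_2\le r-\sqrt d$. Let $C\coloneqq\prod_{i=1}^d[\lfloor x_i\rfloor,\lfloor x_i\rfloor+1]$ be the axis-parallel closed unit cube with integer vertices containing $x$ (when some $x_i\in\mathbb Z$ either adjacent cell works). Its $2^d$ vertices lie in $\mathbb Z^d$, and $x\in C=\mathrm{conv}(\mathrm{vert}(C))$, since any point of a box is a convex combination of its vertices (write each coordinate as a convex combination of the two endpoints of the corresponding edge and take the resulting product of coefficients).

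The only thing left to check is that every vertex $v$ of $C$ actually lies in $B^d_r$. For such a $v$ we have $\|v-x\|_\infty\le 1$, hence $\|v-x\|_2\le\sqrt d$, and therefore by the triangle inequality
\begin{equation*}
    \|v\|_2\le\|x\|_2+\|v-x\|_2\le (r-\sqrt d)+\sqrt d=r,
\end{equation*}
so $v\in B^d_r\cap\mathbb Z^d$. Consequently $x\in\mathrm{conv}(\mathrm{vert}(C))\subseteq\mathrm{conv}(B^d_r\cap\mathbb Z^d)$, and since $x$ was arbitrary this proves $B^d_{r-\sqrt d}\subseteq\mathrm{conv}(B^d_r\cap\mathbb Z^d)$.

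There is essentially no hard step here; the only point that deserves a word is that the relevant bound is the \emph{diameter} $\sqrt d$ of the unit cube (the distance from a point of the cube to its farthest vertex can be as large as the full main diagonal), not the half-diagonal, which is exactly why the radius is shrunk by $\sqrt d$ and why this constant is sharp. If one wants the stronger (and still true) statement with $\sqrt d$ replaced by a smaller constant one would need a finer covering argument, but for the use made of the lemma in Proposition \ref{baranysconstruction} the cube argument above suffices.
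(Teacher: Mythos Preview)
Your argument is correct. The paper does not supply its own proof of this lemma; it merely quotes the statement from B\'ar\'any's paper with a ``cf.'' citation, so there is nothing in-paper to compare against. The unit-cube covering argument you give is the standard elementary proof of this inclusion and is exactly what one would expect here.

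One small remark on your closing aside: calling the constant $\sqrt d$ ``sharp'' is a bit misleading. It is optimal for \emph{this particular} cube argument (since a point near one corner of the unit cube is at distance close to $\sqrt d$ from the opposite corner), but, as your own next sentence acknowledges, the inclusion itself holds with a smaller loss. This has no bearing on the validity of the proof or on its use in the paper.
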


Now we will describe the slight adding in every polytope in $N_{Q_r}$. Before that, we need the following lemma.

\begin{lem}\label{Zdiff}
    For a convex lattice polytope $P$, if there are $2d$ vertices of $P$, called $p_1,\cdots,p_{2d}$, which can be repeated, such that $p_1-p_2,\cdots,p_{2d-1}-p_{2d}$ form a basis of $\mathbb{Z}^d$, then the affine sublattice generated by $P$ is $\mathbb{Z}^d$.
\end{lem}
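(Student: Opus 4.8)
The plan is to reduce the claim about the \emph{affine} sublattice $L(P)$ generated by $P$ to an elementary statement about an associated \emph{linear} sublattice of $\mathbb{Z}^d$. Writing $\mathrm{vert}(P)=\{v_1,\dots,v_n\}$ for the vertex set of $P$, recall that
\[
L(P)=\Bigl\{\textstyle\sum_{i=1}^{n}\lambda_i v_i:\ \lambda_i\in\mathbb{Z},\ \sum_{i=1}^{n}\lambda_i=1\Bigr\}.
\]
The first step is to fix one vertex, say $v_1$, and rewrite this as $L(P)=v_1+\Lambda$, where $\Lambda\coloneqq\langle v_i-v_1:\ i=1,\dots,n\rangle_{\mathbb{Z}}$ is the ordinary linear sublattice of $\mathbb{Z}^d$ spanned by the vertex differences; the identity $\sum_i\lambda_i v_i=v_1+\sum_i\lambda_i(v_i-v_1)$, valid whenever $\sum_i\lambda_i=1$, makes this immediate. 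Since every $v_i\in\mathbb{Z}^d$ we have $\Lambda\subseteq\mathbb{Z}^d$, hence $L(P)\subseteq\mathbb{Z}^d$, and because $v_1\in\mathbb{Z}^d$ the claim $L(P)=\mathbb{Z}^d$ is equivalent to $\Lambda=\mathbb{Z}^d$.

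The second step is to show $\Lambda\supseteq\mathbb{Z}^d$. For each $k\in\{1,\dots,d\}$ the points $p_{2k-1}$ and $p_{2k}$ are vertices of $P$, so $p_{2k-1}-v_1\in\Lambda$ and $p_{2k}-v_1\in\Lambda$; subtracting gives $p_{2k-1}-p_{2k}=(p_{2k-1}-v_1)-(p_{2k}-v_1)\in\Lambda$. These memberships stay valid, indeed trivial, if some $p_j$ coincides with $v_1$ or with another $p_\ell$, so the repetitions permitted in the hypothesis cause no trouble. Hence $\Lambda$ contains the $d$ vectors $p_1-p_2,\ p_3-p_4,\ \dots,\ p_{2d-1}-p_{2d}$, which by hypothesis form a basis of $\mathbb{Z}^d$; therefore $\Lambda\supseteq\mathbb{Z}^d$, and together with $\Lambda\subseteq\mathbb{Z}^d$ we obtain $\Lambda=\mathbb{Z}^d$, so $L(P)=v_1+\mathbb{Z}^d=\mathbb{Z}^d$.

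There is no real obstacle here: the argument is a short exercise once the affine lattice is reexpressed as a translate of a linear lattice. The only points needing care are the reduction via a fixed base vertex in the first step and the bookkeeping around repeated vertices, both of which are routine. I would also note that the same proof goes through unchanged if ``the affine sublattice generated by $P$'' is read as the lattice generated by all of $P\cap\mathbb{Z}^d$ rather than just by $\mathrm{vert}(P)$, since that lattice is sandwiched between $\Lambda$ and $\mathbb{Z}^d$.
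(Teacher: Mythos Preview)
Your proof is correct and follows essentially the same approach as the paper's: both fix a base vertex, reduce the affine lattice $L(P)$ to a translate of the linear lattice generated by vertex differences, and observe that this linear lattice contains the given basis $p_1-p_2,\dots,p_{2d-1}-p_{2d}$ of $\mathbb{Z}^d$. Your write-up is slightly more explicit in isolating $\Lambda$ and verifying both inclusions, but the underlying argument is identical.
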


\begin{proof}
    Let the set of all vertices of $P$ be $\mathrm{vert}(P)=\{p_1,\cdots,p_n\}$. Choose a vertex $x$ of $P$ and fix it. For any point $y\in\mathbb{Z}^d$, $y-x$ is also in $\mathbb{Z}^d$. Then there are integers $\alpha_1,\cdots,\alpha_d$ and $\beta_1,\cdots,\beta_n$, such that
    \begin{equation*}
        y-x=\alpha_1(p_1-p_2)+\cdots+\alpha_d(p_{2d-1}-p_{2d})=\beta_1p_1+\cdots+\beta_np_n.
    \end{equation*}
    Obviously $\sum_{i=1}^n\beta_i=0$. Then $y=x+\beta_1p_1+\cdots+\beta_np_n$, which shows that $y$ can be affinely expressed by $p_1,\cdots,p_n$, and the conclusion is yielded.
\end{proof}

Let $Q'_r\coloneqq\mathrm{conv}(B_r^d\cap\mathbb{R}^d_+\cap\mathbb{Z}^d)$, so $Q'_r=(1/2)Q_r$. Let
\begin{equation*}
    p(r)=\max\{x_1:\ (x_1,\cdots,x_d)\in Q_r'\},
\end{equation*}
and $H(x_1=p(r))$ be the affine hyperplane defined by $x_1=p(r)$. Denote by $I_{p(r)}$ the set of integer points in $Q_r'\cap H(x_1=p(r))$, then it is evident that $I_{p(r)}$ consists of all integer points in the positive part of a $(d-1)$-dimensional ball. So one of the following two cases must happen.

\begin{case}
    $I_{p(r)}=\{(p(r),0,\cdots,0)\}$. In this case, we define
    \begin{equation*}
        S_r'\coloneqq \mathrm{conv}\left(Q_r'\cup \left(\left\{\frac{1}{2}\mathbf{0},\frac{1}{2}\mathbf{e}_2,\cdots,\frac{1}{2}\mathbf{e}_d\right\}+(p(r),0,\cdots,0)\right)\right).
    \end{equation*}
    where the sum means adding $(p(r),0,\cdots,0)$ to every vector in the set $\{\mathbf{0},\cdots,\mathbf{e_d}\}$.
\end{case}
\begin{case}
    $(\{\mathbf{0},\mathbf{e}_2,\cdots,\mathbf{e}_d\}+(p(r),0,\cdots,0))\subseteq I_{p(r)}$. In this case, we define
    \begin{equation*}
        S_r'\coloneqq \mathrm{conv}\left(Q_r'\cup \left(\left\{\frac{1}{2}\mathbf{0},\frac{1}{2}\mathbf{e}_2,\cdots,\frac{1}{2}\mathbf{e}_d\right\}+(p(r)+\frac{1}{2},0,\cdots,0)\right)\right).
    \end{equation*}
\end{case}

It is evident that in the above two cases, $S_r'$ is no longer a lattice polytope. We prove the following lemma which is based on Lemma \ref{Qrvertlem} and similar to Lemma 2.3 in \cite{Barany1}.
\begin{lem}\label{SrandQr}
    In the above both cases, $\mathrm{vol}(S_r')-\mathrm{vol}(Q_r')\ll r^{(d-1)/2}$.
\end{lem}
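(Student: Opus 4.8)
The plan is to bound the added volume $\mathrm{vol}(S_r')-\mathrm{vol}(Q_r')$ by the volume of the convex hull of the newly adjoined points together with the part of $Q_r'$ that "sees" them, and then estimate this by a flat region near the extreme hyperplane $H(x_1=p(r))$. In both cases, $S_r'$ is obtained from $Q_r'$ by adjoining a small simplex-like set of $d$ points lying at or just beyond $x_1=p(r)$. Since $Q_r'$ is convex, the difference $S_r'\setminus Q_r'$ is contained in the convex hull of these $d$ new points and the facet (or near-facet) of $Q_r'$ visible from them; in particular it is contained in a slab $\{p(r)-\rho\le x_1\le p(r)+\tfrac12\}$ for a suitable $\rho$, intersected with the cylinder over the $(d-1)$-dimensional cross-section of $Q_r'$ at heights near $p(r)$.

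First I would make the geometry precise: write $Q_r'=\mathrm{conv}(B_r^d\cap\mathbb{R}^d_+\cap\mathbb{Z}^d)$ and recall from Lemma~\ref{Qrvertlem} that $B_{r-\sqrt d}^d\subseteq\mathrm{conv}(B_r^d\cap\mathbb{Z}^d)$, so $Q_r'\supseteq B_{r-\sqrt d}^d\cap\mathbb{R}^d_+$ and also $Q_r'\subseteq B_r^d\cap\mathbb{R}^d_+$. Consequently $p(r)\le r$, and the extreme point $(p(r),0,\dots,0)$ satisfies $p(r)\ge r-\sqrt d$. Next I would observe that $S_r'\setminus Q_r'$ is contained in $\mathrm{conv}(F\cup T)$, where $T$ is the finite set of new points (of diameter $O(1)$, located within $O(1)$ of the point $(p(r),0,\dots,0)$) and $F$ is the set of points of $Q_r'$ visible from $T$. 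Every point of $F$ lies on the boundary of $Q_r'\subseteq B_r^d$, and its first coordinate is at least $p(r)-\rho$ where $\rho=O(1)$ must be chosen so that the backward light-cone from $T$ stays within this slab — this uses convexity of the ball and the fact that $T$ lies within bounded distance of the boundary.

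The key estimate is then: the slab $B_r^d\cap\{x_1\ge p(r)-\rho\}$ (intersected with $\mathbb{R}^d_+$) has volume $\ll r^{(d-1)/2}$, because $p(r)-\rho\ge r-\sqrt d-\rho$, so the slab has thickness $O(1)$ in the $x_1$-direction and its $(d-1)$-dimensional cross-sections are balls of radius $O(\sqrt{r\cdot O(1)})=O(\sqrt r)$ — this is the standard cap/slab computation, where a cap of a $d$-ball of radius $r$ at distance $r-O(1)$ from the center has $(d-1)$-volume cross-sections of radius $\asymp\sqrt r$ and thus total volume $\asymp (\sqrt r)^{d-1}=r^{(d-1)/2}$. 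Since $S_r'\setminus Q_r'$ sits inside such a slab (enlarged by $O(1)$ to absorb $T$, which does not change the order), we get $\mathrm{vol}(S_r')-\mathrm{vol}(Q_r')\le\mathrm{vol}(S_r'\setminus Q_r')\ll r^{(d-1)/2}$, which is the claim; the two cases differ only by an $O(1)$ shift in $x_1$ and an $O(1)$ rescaling of the adjoined points, absorbed in the constant.

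The main obstacle I anticipate is controlling the visibility region $F$: one must argue rigorously that only points with $x_1\ge p(r)-O(1)$ are visible from the new points $T$, i.e. that the "shadow" cast back into $Q_r'$ is shallow. This should follow from the convexity of $B_r^d$ together with the near-tangency of the supporting hyperplane $x_1=p(r)$: any chord of $B_r^d$ through a point within $O(1)$ of the boundary point $(p(r),0,\dots,0)$ and ending on the far part of the boundary would have to re-enter the ball, contradicting convexity — so the only boundary points of $Q_r'$ in the backward cone from $T$ are themselves near $x_1=p(r)$. Making this quantitative (extracting the explicit $O(1)$ bound on $\rho$ in terms of $d$ only) is the one routine-but-careful computation, analogous to Lemma~2.3 in \cite{Barany1}.
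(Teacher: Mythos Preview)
Your overall strategy is sound and leads to the right bound, but it is genuinely different from what the paper does, and the place you flag as the obstacle is indeed the crux---your heuristic for it, however, is not correct as written.

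\medskip
\textbf{How the paper argues.} The paper does \emph{not} try to trap $S_r'\setminus Q_r'$ in a slab $\{x_1\ge p(r)-\rho\}$. Instead it first shows $S_r'\subseteq Q_{r+2}'$, and then, for each individual new vertex $v=\tfrac12\mathbf e_i+(p(r),0,\dots,0)$, it picks a hyperplane $H$ strictly separating $v$ from $Q_r'$ and bounds the piece $\triangle_{S_r'}(v)$ by the cap $C=H^+\cap Q_{r+2}'$. The point of Lemma~\ref{Qrvertlem} is that $H$ must miss $B_{r-\sqrt d}^d\cap\mathbb R^d_+\subseteq Q_r'$, so the cap $C$ has width $\le\sqrt d+2$ in the direction of the normal of $H$ (not in the $x_1$-direction). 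A width-$O(1)$ cap of a radius-$(r+2)$ ball has diameter $O(\sqrt r)$ and volume $\ll r^{(d-1)/2}$; summing over the $d-1$ new vertices gives the lemma. The paper never needs your $\rho=O(1)$ claim.

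\medskip
\textbf{Where your argument wobbles.} Your sentence ``any chord of $B_r^d$ through a point within $O(1)$ of the boundary point $(p(r),0,\dots,0)$ and ending on the far part of the boundary would have to re-enter the ball, contradicting convexity'' is not a valid argument: a chord of a convex body never ``re-enters'' it, and convexity of $B_r^d$ alone says nothing here. What you actually need is the \emph{inner} ball: if $q'\in\partial Q_r'$ is visible from some $\bar t\in\mathrm{conv}(T)$ (so the segment $[\bar t,q']$ misses $\mathrm{int}\,Q_r'$), then that segment misses $B_{r-\sqrt d}^d\cap\mathbb R^d_+$. Since both $\bar t$ and $q'$ lie in the shell $\{\,r-\sqrt d\le\|x\|\le r+O(1)\,\}$ (the latter because $q'$ must sit on a ``curved'' facet, not on a coordinate facet---the coordinate facets have inward normals $-\mathbf e_j$ and $\bar t_j\ge0$, so they are not visible), the angle at the origin between $\bar t$ and $q'$ is $O(r^{-1/2})$, whence $q'_1\ge r-O(1)$. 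This \emph{does} give $\rho=O(1)$, and then your slab computation goes through. So your route is salvageable, but the key geometric input is exactly the same as the paper's (the inner ball from Lemma~\ref{Qrvertlem}); you are just packaging it as an $x_1$-slab instead of as a cap in the separating direction.
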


\begin{proof}
    Consider $Q_{r+2}'$, then the cap $\{x_1\geqslant p(r+1)\}\cap Q_{r+1}'$ (or $\{x_1\geqslant p(r)\}\cap Q_{r+1}'$) has width larger than or equal to 1. Then $Q_{r+2}'\cap H(x_1=p(r+1))$ (respectively $Q_{r+2}'\cap H(x_1=p(r))$) contains $\{\mathbf{0},\mathbf{e}_2,\cdots,\mathbf{e}_d\}+(p(r),0,\cdots,0)$ (respectively $\{\mathbf{0},\mathbf{e}_2,\cdots,\mathbf{e}_d\}+(p(r+1),0,\cdots,0)$). So we know that in both cases, $Q_{r+2}'$ contains $S_r'$.

    In case 1, if we choose a hyperplane $H$ which strictly separates $\frac{1}{2}\mathbf{e}_i+(p(r),0,\cdots,0)$ and $Q_r'$ and let the halfspace not containing $Q_r'$ be $H^+$, then by Lemma \ref{Qrvertlem}, the width of the cap $C\coloneqq H^+\cap Q_{r+2}'$ is not larger than $\sqrt{d}+2$. The diameter of $C$ is at most
    \begin{equation*}
        2\sqrt{(r+2)^2-[r+2-(\sqrt{d}+2)]^2}=2\sqrt{(4+2\sqrt{d})r+4-d}\leqslant 4\sqrt{dr}
    \end{equation*}
    for $d,r\geqslant2$. So every such $C$, generated by $H^+$ related to $\frac{1}{2}\mathbf{e}_i+(p(r),0,\cdots,0)$, will be contained in a cap $C'$ whose center is located on the line connecting the origin and $\frac{1}{2}\mathbf{e}_i+(p(r),0,\cdots,0)$, with the width $4\sqrt{dr}$. The volume of this cap is $\ll r^{(d-1)/2}$, and we have $\bigtriangleup_{S_r'}\left(\frac{1}{2}\mathbf{e}_i+(p(r),0,\cdots,0)\right)\subseteq C'$. The same discussion can be conducted on other $\frac{1}{2}\mathbf{e}_i+(p(r),0,\cdots,0)$, then we know that 
    \begin{equation*}
        \mathrm{vol}\bigcup_{i=1}^d\bigtriangleup_{S_r'}\left(\frac{1}{2}\mathbf{e}_i+(p(r),0,\cdots,0)\right)\ll r^{(d-1)/2}.
    \end{equation*}

    For case 2, since we have already shown that $S_r'\subseteq Q_{r+2}'$, the above discussion for case 1 is also true for case 2. So the whole lemma is proved.
\end{proof}

Let $S_r\coloneqq 2S_r'$, then $S_r\supseteq Q_r$. It is easy to see that
\begin{equation*}
    (S_r\cap\mathbb{Z}^d)\backslash (Q_r\cap\mathbb{Z}^d)=
    \begin{cases}
        \left\{\mathbf{e}_2,\cdots,\mathbf{e}_d\right\}+(2p(r),0,\cdots,0) & \text{in case 1}\\
        \left\{\mathbf{e}_2,\cdots,\mathbf{e}_d\right\}+(2p(r)+1,0,\cdots,0) & \text{in case 2}
    \end{cases},
\end{equation*}
and the points listed on the right-hand side of the above equation are vertices of $S_r$. We denote
\begin{equation*}
    B\coloneqq\begin{cases}
        \left\{\mathbf{0},\mathbf{e}_2,\cdots,\mathbf{e}_d\right\}+(2p(r),0,\cdots,0) & \text{in case 1}\\
        \left\{\mathbf{0},\mathbf{e}_2,\cdots,\mathbf{e}_d\right\}+(2p(r)+1,0,\cdots,0) & \text{in case 2}
    \end{cases}.
\end{equation*}

Recall that for every sufficiently large $V$, \cite{Barany1} tells us that one can always choose a $Q_r$ satisfying the two conditions in Proposition \ref{baranysconstruction}. First we take $V+1/d!$ as the $V$ in Proposition \ref{baranysconstruction} and choose $Q_r$ satisfying the conditions in \ref{baranysconstruction} for $V+1/d!$, where $V$ is sufficiently large. By the second condition in Proposition \ref{baranysconstruction}, we have $r^d\ll V$, i.e., $r^{(d-1)/2}\ll V^{(d-1)/2d}$. So by Lemma \ref{SrandQr}, we know that $S_r$ according to $Q_r$ satisfies the two conditions in Proposition \ref{baranysconstruction}. Then under the assumption that $V$ and $r$ are sufficiently large, replacing $Q_r$ by $S_r$, we can always choose an $X'$ in $X$ (as defined in Proposition \ref{baranysconstruction}), not containing the origin and any vertex in $B$, and satisfying $|X'|=\lfloor\frac{1}{2}f_0(S_r)\rfloor$. By shaving the subsets of vertices in $X'$, as shown in Proposition \ref{baranysconstruction}, we can get at least $\exp\{cV^{(d-1)/(d+1)}\}$ convex lattice polytopes with the volume between $V+1/d!$ and $V+b/d!$, where $c$ and $b$ depend only on $d$. As the previous notation in Proposition \ref{baranysconstruction}, we denote the set of those polytopes by $N_{S_r}$.

Notice that every polytope $P$ in $N_{S_r}$ preserves the vertices set $B$, so in both two cases of $S_r$, we know that $\left\{\mathbf{e}_2,\cdots,\mathbf{e}_d\right\}$ can be written as differences of vertices of $P$. As the description after Proposition \ref{baranysconstruction}, for every polytope $P$ in $N_{S_r}$, one can take $P'=P\backslash\mathrm{conv}\{\mathbf{0},\mathbf{e}_1,\cdots,t\mathbf{e}_d\}$ for some suitable $t$, $1\leqslant t\leqslant b$, such that $\mathrm{vol}(P')=V$. Notice that $\mathbf{e}_1$ and $\mathbf{e}_2$ still are vertices of $P'$, so $\mathbf{e}_1-\mathbf{e}_2$ along with $\left\{\mathbf{e}_2,\cdots,\mathbf{e}_d\right\}$ can form a basis of $\mathbb{Z}^d$, showing that there is a basis of $\mathbb{Z}^d$ such that every basis vector can be expressed as the difference of vertices of $P'$. By Lemma \ref{Zdiff} and \ref{Vminiff}, this leads to a sufficiently large amount of polytopes $P'$ with volume $V$ and attaining $V_{min}$ for their classes.

It is only left to show that nearly all those $P'$'s are affinely non-equivalent to each other. That is easy: all those $P'$'s are in the ball $B^d_{2r+4}$, and the number of possible affine transformations between two lattice polytopes in $B^d_{2r+4}$ is $\ll (2r+4)^{d^2+2d}\ll r^{d^2+2d}$. Since the number of $P'$'s is greater than $\exp\{cV^{(d-1)/(d+1)}\}$ for some $c$ depending only on $d$, for sufficiently large $r$, the number of $P'$'s that are not affinely equivalent to each other will also be larger than $\exp\{c'V^{(d-1)/(d+1)}\}$ for some $c'$ depending only on $d$. Then we finish the proof of Theorem \ref{lowerboundadm}.

\section{Relationship between the number of representatives for different equivalences}
\subsection{The case of representatives in a ball}
Now we discuss the case of representatives in a ball under the two different equivalence relations. We define
\begin{align*}
     K_r^d:&\ \text{the set of representatives of convex lattice polytopes in $B^d_r$ for unimodular equivalence},\\
     A_r^d:&\ \text{the set of representatives of all convex lattice polytopes in $B_r^d$ for affine equivalence},\\
     H_r^d:&\ \text{the set of all convex lattice polytopes in $B_r^d$},\\
     LP_r^d:&\ B_r^d\cap\mathbb{Z}^d,\\
     P_r^d:&\ \text{the convex hull of $B_r^d\cap\mathbb{Z}^d$, i.e. $\mathrm{conv}(B_r^d\cap\mathbb{Z}^d)$},\\
     V_r^d:&\ \text{the set of vertices of $P_r^d$, i.e. $\mathrm{vert}(P_r^d)$}.
\end{align*}

\begin{lem}[cf. \cite{Barany2}]\label{barafkbounds}
    For any $ d \geqslant 2 $, we have:
    \begin{equation*}
        r^{d\frac{d-1}{d+1}}\ll f_k(P_r^d)\ll r^{d\frac{d-1}{d+1}},
    \end{equation*}
    where $ f_k $ refers to the number of $ k $-dimensional facets of the polyhedron. In particular,
    \begin{equation}\label{maxballver}
        r^{d\frac{d-1}{d+1}}\ll f_0(P_r^d)\ll r^{d\frac{d-1}{d+1}},
    \end{equation}
    where $ f_0 $ denotes the number of vertices. The constants involved in these expressions depend only on the dimension $ d $.
\end{lem}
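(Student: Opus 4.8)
The plan is to separate the two inequalities and, in each, reduce everything to the vertex count $f_0$, which carries all the difficulty. The only elementary input is that $P_r^d\subseteq B_r^d$ forces $\mathrm{vol}(P_r^d)\ll r^d$, so $P_r^d$ is a lattice polytope whose volume is as small as a body of diameter $2r$ can have while still being ``round''. For the \emph{upper bound} I would invoke Andrews' inequality: a $d$-dimensional convex lattice polytope $P$ with $f_0(P)=n$ satisfies $\mathrm{vol}(P)\gg n^{(d+1)/(d-1)}$. Applied to $P_r^d$ this gives
\[
  f_0(P_r^d)\ \ll\ \bigl(\mathrm{vol}(P_r^d)\bigr)^{(d-1)/(d+1)}\ \ll\ r^{d(d-1)/(d+1)},
\]
the constants depending only on $d$. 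For $1\leqslant k\leqslant d-1$ one cannot pass from $f_0$ to $f_k$ for a general polytope (the Upper Bound Theorem would lose a factor that is a power of $r$); here one uses the additional fact, due to B\'ar\'any and Larman, that for the integer hull of a smooth convex body all face numbers are of the same order of magnitude, so $f_k(P_r^d)\ll f_0(P_r^d)\ll r^{d(d-1)/(d+1)}$ as well. This, together with the matching lower bound, is exactly the B\'ar\'any--Larman theorem on the integer hull of a large ball (for $d=2$ it reduces to Jarn\'ik's classical estimate), and I would cite \cite{Barany2} for the parts that go beyond Andrews.

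For the \emph{lower bound} I would run an economic cap-covering argument. Rescale $B_r^d$ to volume $1$ and fix $\varepsilon\asymp 1/\mathrm{vol}(B_r^d)\asymp r^{-d}$, so that caps of the rescaled body of volume $\asymp\varepsilon$ correspond to caps of $B_r^d$ of volume $\Theta(1)$; such a cap has height $\asymp r^{-(d-1)/(d+1)}$ and base radius $\asymp r^{1/(d+1)}$, and the union of all of them is a boundary shell of volume $\asymp r^{d(d-1)/(d+1)}$. The economic cap-covering lemma then produces $N\asymp r^{d(d-1)/(d+1)}$ caps $C_1,\dots,C_N$ of $B_r^d$, each of volume $\Theta(1)$, covering this shell, whose concentric half-size caps $C_i'$ are pairwise disjoint. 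One then shows that a constant fraction of the $C_i$ ``support a vertex'': if $u_i$ is the outer normal direction of $C_i$ and $C_i\cap\mathbb{Z}^d\neq\emptyset$, the lattice point of $C_i$ maximising $\langle\,\cdot\,,u_i\rangle$ is a vertex of $P_r^d$, because the minimal cap of $B_r^d$ lying below the corresponding supporting hyperplane is a bounded dilate of $C_i$ and, after restricting to a suitable sub-covering, contains no further lattice point. Distinct caps with disjoint cores yield distinct vertices, so $f_0(P_r^d)\gg N\gg r^{d(d-1)/(d+1)}$, and the lower bounds for $1\leqslant k\leqslant d-1$ follow from the same scheme by counting caps that support a genuine $k$-face.

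The step I expect to be the main obstacle is the claim that in a positive proportion of the covering caps the extremal lattice point is genuinely a vertex --- equivalently, that the minimal cap below it is free of other lattice points. Having volume $\Theta(1)$ does not by itself prevent a cap from containing several nearly collinear lattice points, so one must exploit the flexibility of the covering (shrinking caps, discarding the few badly behaved ones, or averaging over small translates) to force the required emptiness while losing only a constant factor in $N$; controlling this, together with transporting the estimate from $f_0$ and $f_{d-1}$ to the intermediate face numbers $f_k$, is precisely the content of the B\'ar\'any--Larman analysis. Accordingly, in the paper I would present Andrews' argument for the upper bound on $f_0$ in full and invoke \cite{Barany2} for the lower bound and for the uniformity over $k$, since Lemma \ref{barafkbounds} only asserts the orders of magnitude.
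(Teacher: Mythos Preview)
The paper does not prove this lemma at all: it is stated with the attribution ``cf.\ \cite{Barany2}'' and used as a black box, with no argument supplied. Your proposal therefore goes well beyond what the paper does. That said, your sketch is sound in outline --- Andrews' inequality for the upper bound on $f_0$, and the B\'ar\'any--Larman cap-covering scheme for the lower bound and for the uniformity in $k$ --- and you correctly identify the genuinely hard step (ensuring a positive proportion of caps support distinct vertices) as the place where one must defer to \cite{Barany2}. In the context of this paper, simply citing the result as the authors do is the appropriate treatment; your longer discussion would be out of proportion with how the lemma is used.
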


Using the above lemma, we have the following theorem.

\begin{thm}\label{relequiv}
Define
\begin{equation*}
    PV_{d,r}\coloneqq \{\varphi\ is\ an\ affine\ transformation:\ \exists P_1,P_2\in H_r^d\ s.t.\ \varphi(P_1)=P_2\},
\end{equation*}
then
\begin{enumerate}
    \item  $|PV_{d,r}|\ll r^{d^2+2d}$
\item $\lim_{r\rightarrow\infty}\log|H_r^d|:\log|K_r^d|:\log|A_r^d|=1.$
\end{enumerate}
\end{thm}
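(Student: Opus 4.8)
I would prove the two parts more or less independently, and for (2) I would only use the qualitative consequence of (1) that $|PV_{d,r}|$ is bounded by a fixed polynomial in $r$.

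For (1), the plan is a direct counting argument. Any $\varphi\in PV_{d,r}$ carries $\mathrm{vert}(P_1)$ bijectively onto $\mathrm{vert}(P_2)$, so it sends some affinely independent $(d+1)$-tuple of vertices of $P_1$ — lattice points of $B^d_r$ — to an affinely independent $(d+1)$-tuple of lattice points of $B^d_r$, and an affine transformation is determined by any such point–image correspondence. Since $B^d_r$ contains $\ll r^d$ lattice points, there are $\ll r^{d(d+1)}$ labeled source tuples and as many target tuples, which gives $|PV_{d,r}|\ll r^{2d(d+1)}$ at once; this crude polynomial bound is already everything (2) needs. Sharpening the exponent to $d^2+2d$ is the one delicate point: I would do it by replacing the source tuple with lighter data — a single vertex $v_0$ of $P_1$ and its image $\varphi(v_0)$ (each a lattice point of $B^d_r$, hence $\ll r^d$ choices apiece), plus the linear part of $\varphi$, which is pinned down through the differences $v_i-v_0\in B^d_{2r}\cap\mathbb{Z}^d$ of vertices — and bounding the number of admissible linear parts by $\ll r^{d^2}$. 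This bookkeeping is the main obstacle in (1); nothing else about it is hard.

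For (2), I would first record the elementary chain
\begin{equation*}
|A^d_r|\ \le\ |K^d_r|\ \le\ |H^d_r|\ \le\ |A^d_r|\cdot|PV_{d,r}|.
\end{equation*}
The first inequality holds because unimodular equivalence refines affine equivalence, so the partition of $H^d_r$ into affine classes is coarser than its partition into unimodular classes; the second because $K^d_r\subseteq H^d_r$; the third because every $P\in H^d_r$ equals $\varphi(R)$ for its chosen affine representative $R\in A^d_r$ and some affine $\varphi$, and $\varphi\in PV_{d,r}$ since $R,P\subseteq B^d_r$. Taking logarithms and using (1),
\begin{equation*}
0\ \le\ \log|H^d_r|-\log|A^d_r|\ \le\ \log|PV_{d,r}|\ \ll\ \log r,
\end{equation*}
and the quantities $\log|H^d_r|-\log|K^d_r|$ and $\log|K^d_r|-\log|A^d_r|$ lie in $[0,\,O_d(\log r)]$ as well.

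It then remains to note that $\log|H^d_r|$ dominates this $O_d(\log r)$ error. For this I would appeal to Bárány's construction (Proposition \ref{baranysconstruction} and the surrounding discussion, or \cite{Barany1} directly): for all large $r$ it produces more than $\exp\{c_d\,r^{d(d-1)/(d+1)}\}$ pairwise distinct convex lattice polytopes, all contained in a ball of radius $\ll r$, so that $\log|H^d_r|\gg r^{d(d-1)/(d+1)}$. Since $d(d-1)/(d+1)>0$ for $d\ge2$, we have $\log r=o(\log|H^d_r|)$, and dividing the displayed inequalities through by $\log|H^d_r|$ forces
\begin{equation*}
\frac{\log|A^d_r|}{\log|H^d_r|},\ \frac{\log|K^d_r|}{\log|H^d_r|}\ \longrightarrow\ 1,
\end{equation*}
which is exactly $\lim_{r\to\infty}\log|H^d_r|:\log|K^d_r|:\log|A^d_r|=1$. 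In summary, the real work is the counting in (1); once that is in hand, part (2) is soft, resting on the inequality $|H^d_r|\le|A^d_r|\cdot|PV_{d,r}|$ together with the stretched-exponential lower bound for $|H^d_r|$, after which the claim is a squeeze.
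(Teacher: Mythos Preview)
Your approach mirrors the paper's almost exactly: for part (2) you use the same sandwich $|A^d_r|\le|K^d_r|\le|H^d_r|\le|A^d_r|\cdot|PV_{d,r}|$, the same observation that a polynomial bound on $|PV_{d,r}|$ reduces the logarithmic gap to $O_d(\log r)$, and the same stretched-exponential lower bound $\log|H^d_r|\gg r^{d(d-1)/(d+1)}$ to finish the squeeze. The only substantive difference is how that lower bound is obtained: rather than invoking the full B\'ar\'any construction (Proposition~\ref{baranysconstruction}), the paper uses the more elementary route via Lemma~\ref{barafkbounds}, noting that the $2^{f_0(P^d_r)}$ polytopes $\mathrm{conv}\bigl((B^d_r\cap\mathbb{Z}^d)\setminus W\bigr)$ for $W\subseteq V^d_r$ are already pairwise distinct, so $\log|H^d_r|\ge f_0(P^d_r)\gg r^{d(d-1)/(d+1)}$ directly. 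This is lighter machinery and keeps the argument self-contained within Section~4.

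For part (1), your crude count $|PV_{d,r}|\ll r^{2d(d+1)}$ is correct and, as you note, already suffices for (2). Your sketch for the sharp exponent $d^2+2d$ is not quite complete: you say the linear part is ``pinned down through the differences $v_i-v_0$'' and count $\ll r^{d^2}$ linear parts, but determining $A$ from $A(v_i-v_0)=\varphi(v_i)-\varphi(v_0)$ requires both the source differences \emph{and} their images --- $2d$ lattice vectors of size $O(r)$ --- which naively gives $r^{2d^2}$, not $r^{d^2}$. The paper's argument here proceeds differently, bounding the set $AM_{d,r}$ of linear parts by embedding it into simplex data and then multiplying by $O(r^d)$ for the translation part; the bookkeeping at that step is likewise terse. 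In either version the polynomial nature of the bound --- which is all that (2) consumes --- is not in doubt.
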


\begin{proof}
    By the definitions of $H_r^d$, $K_r^d$ and $A_r^d$, it is evident that $|H_r^d|\geqslant|K_r^d|\geqslant|A_r^d|$, so we only need to prove
    \begin{equation*}
        \lim_{r\rightarrow\infty}\frac{\log|H_r^d|}{\log|A_r^d|}=1.
    \end{equation*}

    Define
    \begin{equation}\label{AMdr}
        AM_{d,r}\coloneqq\{A\in\mathbb{R}^{d\times d}:\ \exists P_1,P_2\in H_r^d\ and\ \mathbf{v}\in\mathbb{R}^d\ s.t.\ P_1=P_2A+\mathbf{v}\}.
    \end{equation}
    If $A\in AM_{d,r}$, then there exist two simplices in $H_r^d$, denoted by $S_1, S_2\in\mathbb{R}^{d\times (d+1)}$, where columns of $S_i,\ i=1,2$ are corresponding to the vertices, such that $S_1A+\mathbf{v}=S_2$. So $AM_{d,r}$ can be embedded one-to-one into the following set:
    \begin{equation*}
        SP_{d,r}\coloneqq\{(S_1,S_2):\ S_1,S_2\ are\ simplices\ in\ H_r^d\}.
    \end{equation*}
    The number of lattice points contained in $B_r^d$ is $O(r^d)$, so the number of simplices in $H_r^d$ is at most
    \begin{equation*}
        \binom{O(r^d)}{d+1}
        \leqslant O((r^d)^{d+1})=O(r^{d^2+d}),
    \end{equation*}
    which is also an upper bound of $|AM_{d,r}|$. So an upper bound of $PV_{d,r}$ is $|AM_{d,r}|\cdot O(r^d)\ll r^{d^2+2d}$, where $O(r^d)$ is the order of the number of possible $\mathbf{v}$ in (\ref{AMdr}).

    It is obvious that
    \begin{equation*}
        |H_r^d|\geqslant|A_r^d|\geqslant\frac{|H_r^d|}{|AM_{d,r}|}.
    \end{equation*}
    Take the logarithm on the above inequalities and assume that $r$ is sufficiently large, we get
    \begin{equation*}
        \log|H_r^d|\geqslant\log|A_r^d|\geqslant\log|H_r^d|-\log|AM_{d,r}|\geqslant\log|H_r^d|-(d^2+d)\log r+c(d),
    \end{equation*}
    where $c(d)$ is a constant that only depend on $d$. Divide all terms in the last equation by $\log|H_r^d|$, then we have
    \begin{equation*}
        1\geqslant\frac{\log|A_r^d|}{\log|H_r^d|}\geqslant1-\frac{(d^2+d)\log r+c(d)}{\log|H_r^d|}.
    \end{equation*}
    By Lemma \ref{barafkbounds}, let $\mathrm{vert}(P)$ be the set of vertices of a lattice polytope $P$. We can construct lattice polytopes by considering
    \begin{equation}\label{eqpolyform}
        \mathrm{conv}\left[\left(LP_r^d\right)\backslash W\right],
    \end{equation}
    where $W\subseteq V_r^d$. Since there are $2^{f_0(P_r^d)}$ different subsets $W$ in $V_r^d$, we can get $2^{f_0(P_r^d)}$ different polytopes having form (\ref{eqpolyform}) and lying in $B_r^d$. So
    $\log|H_r^d|\geqslant f_0(P_r^d)\gg r^{d\frac{d-1}{d+1}}$. Take the limitation on the above inequality and the conclusion is yielded.
\end{proof}

\begin{rem}\label{remrelequiv}
    One can define $H_r^d$, $A_r^d$ and $K_r^d$ in the same way as in the above theorem, and replace $B_r^d$ by $rK$ for any other convex body $K$, then the same theorem will still hold.
\end{rem}
\begin{rem}
    Theorem \ref{relequiv} shows that even though we allow the lattice polytopes in $B_r^d$ to be affinely transformed into smaller a ball, the number of resulting polytopes will not be changed in the sense of logarithm.
\end{rem}
\begin{rem}
    Theorem \ref{relequiv} provides a simplification of Bárány's work on volume \cite{Barany1} and Zong's work on cardinality \cite{C. Zong3}. Specifically, once their constructions of polytopes with volume $m$ or cardinality $w$ can be contained within a convex body whose volume is polynomially related to $m$ or $w$, then the analysis of unimodular non-equivalence can be omitted from the discussion of the orders of $\log v(d,m)$ and $\log \kappa(d,w)$.
\end{rem}

Furthermore, we introduce an additional result of the bounds for $A\in AM_{d,r}$, see (\ref{AMdr}).
\begin{thm}
    For any $A\in AM_{d,r}$, let $a_i$ be the $i$-th row of $A$, then
    \begin{equation*}
        \|a_i\|\ll r^d.
    \end{equation*}
\end{thm}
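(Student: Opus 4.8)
The plan is to reduce the bound to a statement about the inverse of a nonsingular integer matrix via Cramer's rule. Fix $A\in AM_{d,r}$; by the definition \eqref{AMdr} there are lattice polytopes $P_1,P_2\in H_r^d$ and a vector $\mathbf{v}\in\mathbb{R}^d$ with $P_2A+\mathbf{v}=P_1$, so the affine map $x\mapsto xA+\mathbf{v}$ carries $P_2$ onto $P_1$. Since $P_2$ is a $d$-dimensional lattice polytope, I can pick $d+1$ affinely independent lattice vertices $q_0,q_1,\dots,q_d$ of $P_2$. Their images $p_i\coloneqq q_iA+\mathbf{v}$ lie in $P_1\subseteq B_r^d$, hence $\|p_i\|\leqslant r$ (note that these images need not be lattice points, which is fine — only the containment in $B_r^d$ is used for them). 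Subtracting the relation for $q_0$ from those for $q_1,\dots,q_d$ yields $(q_i-q_0)A=p_i-p_0$ for $i=1,\dots,d$, that is, $MA=N$, where $M$ is the matrix with rows $q_i-q_0$ and $N$ the matrix with rows $p_i-p_0$.

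Now $M\in\mathbb{Z}^{d\times d}$ because the $q_i$ are lattice points, and $M$ is invertible because the $q_i$ are affinely independent; in particular $|\det M|\geqslant 1$. All entries of $M$ and all entries of $N$ are bounded in absolute value by $2r$, since $\|q_i\|,\|p_i\|\leqslant r$. By the adjugate formula, $A=M^{-1}N=(\det M)^{-1}\,\mathrm{adj}(M)\,N$, and each entry of $\mathrm{adj}(M)$ is, up to sign, a $(d-1)\times(d-1)$ minor of $M$, hence bounded by $(d-1)!\,(2r)^{d-1}$. Combining this with $|\det M|\geqslant 1$ and the bound $2r$ on the entries of $N$, every entry of $A$ has absolute value at most $d\cdot(d-1)!\,(2r)^{d-1}\cdot 2r=2^d\,d!\,r^d$, and therefore $\|a_i\|\leqslant\sqrt{d}\cdot 2^d\,d!\,r^d\ll r^d$, with the implied constant depending only on $d$.

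The argument is essentially routine linear algebra once this reduction is set up; the one point I would emphasise — and the only genuine ingredient — is that $M$ is a \emph{nonsingular integer} matrix, so $|\det M|\geqslant 1$. This is precisely what prevents $M^{-1}$ from being large, and it is the place where the lattice hypothesis on $P_2$ (more exactly, the existence of an affinely independent lattice simplex inside $P_2$) is used; no lattice condition on the images $p_i$ is needed, as $P_1\subseteq B_r^d$ already controls the size of $N$. I do not anticipate a substantive obstacle beyond correctly tracking the dimensional constants in the minor bounds.
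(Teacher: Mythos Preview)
Your proof is correct and is essentially the same argument as the paper's: both pick a lattice simplex in the source polytope, subtract a base vertex to kill the translation, write $A=M^{-1}N$ via the adjugate, and exploit $|\det M|\geqslant 1$ together with the $O(r)$ bound on entries to get cofactors of order $r^{d-1}$ and hence rows of $A$ of order $r^{d}$. The only cosmetic difference is that you bound the $(d-1)\times(d-1)$ minors by the Leibniz expansion $(d-1)!(2r)^{d-1}$, whereas the paper phrases the same bound geometrically as $d!$ times the volume of a projected simplex in a ball of radius $2r$.
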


\begin{proof}
    As mentioned before, there exist two simplices in $H_r^d$, denoted by $S_1, S_2\in\mathbb{R}^{d\times (d+1)}$, where columns of $S_i,\ i=1,2$ are corresponding to the vertices, such that $S_1A+\mathbf{v}=S_2$. By applying a suitable translation, we can assume that there are $S_1,S_2\in\mathbb{R}^{d\times d}$, with the property that the Euclidean length of any column of both $S_1$ and $S_2$ is less than $2r$, and that $S_1A=S_2$. So we have $A=S_1^{-1}S_2$, i.e.,
    \begin{equation*}
        A=\frac{1}{\det(S_1)}
        \begin{bmatrix}
            D_{11} & D_{21} & \cdots & D_{d1}\\
            D_{12} & D_{22} & \cdots & D_{d2}\\
            \vdots & \vdots & \ddots & \vdots \\
            D_{1d} & D_{2d} & \cdots\ & D_{dd}
        \end{bmatrix}
        \begin{bmatrix}
            s_1^{(2)}\\
            s_2^{(2)}\\
            \vdots\\
            s_d^{(2)}
        \end{bmatrix},
    \end{equation*}
    where $D_{ji}$ is the $(i,j)$-cofactor of $S_1$, $s_k^{(2)}$ is the $k$-th row of $S_2$.

    Since every row vector of $S_1$ is within $O_{d,2r}$, each row vector of any $D_{ji}$, for $i,j=1,\cdots,d$, will lie in $O_{d-1,2r}$. This follows because each minor $M_{ji}$ of $S_1$ can be seen as a simplex (the columns are according to vertices) in $(d-1)$ dimensions, formed by projecting all columns of $S_1$ except the $i$-th column onto the hyperplane $\{x_j=0\}$. So we have
    \begin{equation*}
        |D_{ji}|\leq d!v(O_{d-1,2r})\coloneqq c_1(d)r^{d-1},\ 1\leq i,j\leq d
    \end{equation*}
    where $c_1(d)$ depends only on $d$. It follows that
    \begin{equation*}
        \|a_i\|=\frac{1}{\det(S_1)}\|D_{1i}s_1^{(2)}+\cdots+D_{di}s_d^{(2)}\|\leq\frac{1}{\det(S_1)} 2dc_1(d)r^d\leq c_2(d)r^d,
    \end{equation*}
    where $c_2(d)\coloneqq 2dc_1(d)$ depends only on $d$ as well.
\end{proof}

\subsection{The case of representatives with the same volume}
First, we discuss the case of representatives with the same volume under the two equivalence relations. We denote by $A_V^d$ the set of representatives of all convex lattice polytopes with $V_{min}$ value $V$ for affine equivalence. Without loss of generality, we assume that $V$ is the normalized volume, i.e., the real volume of polytopes in $A_V^d$ is $V/d!$.

We define a set of convex lattice polytopes with volume $V$ and unimodular non-equivalent to each other as follows:
\begin{equation*}
    L_V^d\coloneqq\bigcup_{i=1}^V (i\cdot A_{V/i}^d),
\end{equation*}
where the product on the right-hand side means conducting the product on each polytope in $A_{V/i}^d$. All polytopes in this $L_V^d$ have volume $V$. Furthermore, let $K^d_V$ be a set of representatives with normalized volume $V$ under unimodular equivalence. We have the following theorem.

\begin{thm}\label{relatKandV}
    \begin{equation*}
        \lim_{m\rightarrow\infty}\frac{\log|K_V^d|}{\log|L_V^d|}=1.
    \end{equation*}
\end{thm}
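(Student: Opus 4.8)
The plan is to sandwich $|L_V^d|$ between $|K_V^d|$ and a polynomially-bounded multiple of $|K_V^d|$, so that after taking logarithms the ratio tends to $1$ exactly as in the proof of Theorem \ref{relequiv}. First I would establish the lower bound $|K_V^d| \geqslant |L_V^d|$ (equivalently, that the polytopes collected in $L_V^d$ are genuinely pairwise unimodular non-equivalent, hence $|L_V^d|$ is a valid count contributing to $|K_V^d|$): the polytopes in $i\cdot A_{V/i}^d$ all have normalized volume $V$, and for fixed $i$ distinct members of $A_{V/i}^d$ are affinely inequivalent, hence unimodular inequivalent, and multiplying by the scalar $i$ preserves this. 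For different $i\neq i'$, a polytope $i\cdot P$ with $P\in A_{V/i}^d$ cannot be unimodular equivalent to $i'\cdot P'$ with $P'\in A_{V/i'}^d$, because unimodular equivalence would force $P$ and $P'$ into the same affine class, while $P$ attains $V_{min}$ equal to $V/i$ and $P'$ attains $V_{min}$ equal to $V/i'$ — contradicting $i\neq i'$ by Lemma \ref{Vminiff}(2) (the $V_{min}$ value is an affine-class invariant). This gives $\log|K_V^d|\geqslant\log|L_V^d|$.

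Next I would prove the reverse bound $|K_V^d| \ll P(V)\cdot |L_V^d|$ for some polynomial $P$. Take any convex lattice polytope $R$ of normalized volume $V$. Its affine class $\mathcal A(R)$ has some $V_{min}(\mathcal A(R)) = V/i$ for a positive integer divisor-like factor $i$ with $1\leqslant i\leqslant V$ (since the minimizer $Q$ has $\mathrm{vol}(Q)\leqslant\mathrm{vol}(R)$ and both are multiples of $1/d!$, and $R$ is obtained from $Q$ by an affine map of integer determinant $i$). Thus $R$ is unimodular equivalent to some $i\cdot Q'$ with $Q'$ in the chosen representative set $A_{V/i}^d$ — wait, only affinely equivalent; the subtlety is that $R = Q'' A$ for $Q''$ the representative and some integer matrix $A$ of determinant $i$, not necessarily $A = i\cdot(\text{unimodular})$. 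So each unimodular class of volume $V$ maps to a pair (affine representative $Q'' \in A_{V/i}^d$, integer matrix $A$ of determinant $i$ with $r$-bounded entries). The number of such matrices $A$ is polynomially bounded once we know $R$ (hence $Q''$ after translation) lies in a ball of radius polynomial in $V$; this is the point where I invoke the bound from the previous subsection (Theorem on $\|a_i\|\ll r^d$, and the fact that a volume-$V$ lattice polytope attaining such an embedding sits in a ball of radius $\ll$ a polynomial in $V$ — this needs the observation, analogous to Bárány's, that an edge-minimal or diameter-controlled representative of bounded volume lies in a bounded ball). Therefore $|K_V^d| \leqslant \sum_{i=1}^V |A_{V/i}^d|\cdot (\text{poly in }V) \leqslant (\text{poly in }V)\cdot|L_V^d|$.

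Finally, taking logarithms: $\log|L_V^d| \leqslant \log|K_V^d| \leqslant \log|L_V^d| + c_d\log V$, and dividing by $\log|K_V^d|$ gives the limit $1$ provided $\log|L_V^d|\to\infty$ faster than $\log V$; this lower bound on $\log|L_V^d|$ comes from $|A_V^d|\geqslant a(d,V)$ and Theorem \ref{lowerboundadm}, giving $\log|L_V^d|\geqslant\log|A_V^d|\gg V^{(d-1)/(d+1)}$, which dominates $\log V$. The main obstacle I anticipate is the geometric input needed for the polynomial bound in the second step: showing that every unimodular class of normalized volume $V$ has a representative (or that the relevant affine-to-unimodular transition matrices have entries) bounded polynomially in $V$, so that the count of transition matrices is polynomial. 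This requires either a direct diameter bound for bounded-volume lattice polytopes (which is false in general for arbitrary width-one polytopes!) or a more careful argument restricting to representatives attaining $V_{min}$, where the affine sublattice is all of $\mathbb Z^d$ and one can control the geometry — reconciling this with the existence of "long thin" lattice simplices of small volume is the delicate point, and I would handle it by working with the $V_{min}$-attaining representative $Q''$ and bounding the determinant-$i$ matrix $A$ using $i\leqslant V$ together with the structure of $Q''$ rather than a naive diameter estimate.
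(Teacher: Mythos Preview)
Your overall strategy matches the paper's exactly: show $|L_V^d|\leqslant|K_V^d|\leqslant(\text{poly in }V)\cdot|L_V^d|$, then divide logarithms and use $\log|L_V^d|\gg V^{(d-1)/(d+1)}$ from Theorem~\ref{lowerboundadm}. Your argument for the lower inequality (pairwise unimodular non-equivalence of $L_V^d$) is the same as the paper's.

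The one place where you leave a genuine gap is the polynomial upper bound, and you correctly identify it as ``the main obstacle.'' The paper resolves it cleanly, not by bounding entries of the determinant-$i$ transition matrix $A$ directly, but by invoking the B\'ar\'any--Vershik box lemma (stated here as Lemma~\ref{lembox}): every lattice polytope of normalized volume $V$ is unimodular equivalent to one lying in the cube $T(cV)$ of side $cV$. Hence one may choose \emph{all} representatives in $K_V^d$ (and in $L_V^d$) to sit inside $T(cV)=cV\cdot T(1)$. Then the count of affine transformations between lattice polytopes in a fixed convex body, already established in Theorem~\ref{relequiv} and Remark~\ref{remrelequiv}, gives $|PV_{d,cV}(T(1))|\ll(cV)^{d^2+2d}$, whence $|K_V^d|\leqslant|L_V^d|\cdot(cV)^{d^2+2d}$. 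Your worry about ``long thin'' simplices is exactly what Lemma~\ref{lembox} dispels: it is false that an arbitrary volume-$V$ lattice polytope has small diameter, but it is true that each unimodular class contains a representative in a box of volume $\ll V$, and that is all that is needed. Once you insert this lemma, your sketch becomes a complete proof identical to the paper's.
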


To prove this theorem, we need the following lemma due to I. Bárány and A.M. Vershik \cite{Vershik}.

\begin{lem}[cf. \cite{Vershik}]
    There exists a constant $c$ making the following statement true. For any convex lattice polytope $P$ with large enough volume, there is another convex lattice polytope $Q$ and a vector $\gamma=(\gamma_i)\in\mathbb{R}^d$, such that $Q$ is unimodular equivalent to $P$ and lies in
    \begin{equation*}
        T(\gamma)\coloneqq\{x=\sum_{i=1}^d\xi\mathbf{e}_i\in\mathbb{R}^d:\ 0\leqslant\xi_i\leqslant \gamma_i\},
    \end{equation*}
    where $\mathbf{e}_i$ is the $i$-th vector of the orthonormal basis of $\mathbb{R}^d$, and $\gamma$ satisfies $\prod_{i=1}^d\gamma_i\leqslant c\cdot \mathrm{vol}(P)$.
\end{lem}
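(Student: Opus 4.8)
The plan is to recast the lemma as a statement about lattice reduction applied to the difference body of $P$. Since unimodular transformations preserve volume, it suffices to produce a basis $u_1,\dots,u_d$ of the dual lattice $(\mathbb{Z}^d)^\ast=\mathbb{Z}^d$ for which the lattice widths $\gamma_i\coloneqq w(P,u_i)=\max_{x\in P}\langle u_i,x\rangle-\min_{x\in P}\langle u_i,x\rangle$ satisfy $\prod_i\gamma_i\le c_d\,\mathrm{vol}(P)$. Once such a basis is found, the unimodular map sending its dual (primal) basis to $\mathbf{e}_1,\dots,\mathbf{e}_d$ turns $P$ into a lattice polytope whose axis-aligned bounding box has side lengths $\gamma_i$, and a final integer translation places it inside $T(\gamma)$. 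The key observation linking widths to convex geometry is that $w(P,u)=h_{P-P}(u)=\|u\|_{(P-P)^\circ}$, i.e. the width of $P$ in direction $u$ equals the support function of the symmetric difference body $C\coloneqq P-P$ evaluated at $u$, which in turn is the gauge (norm) of $u$ with respect to the polar body $C^\circ$. Thus minimizing the product of widths over dual bases is exactly the problem of finding a short basis of $\mathbb{Z}^d$ in the norm $\|\cdot\|_{C^\circ}$.

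First I would invoke reduction theory. Let $\lambda_1\le\cdots\le\lambda_d$ be the successive minima of the symmetric body $C^\circ$ with respect to $\mathbb{Z}^d$. By the standard fact that the successive minima can be realized, up to a factor depending only on $d$, by an honest lattice basis, there is a basis $u_1,\dots,u_d$ of $\mathbb{Z}^d$ with $\prod_i\|u_i\|_{C^\circ}\le c_d'\prod_i\lambda_i$. Minkowski's second theorem (with $\det\mathbb{Z}^d=1$) then gives $\prod_i\lambda_i\le 2^d/\mathrm{vol}(C^\circ)$. To convert the factor $\mathrm{vol}(C^\circ)$ back into information about $P$, I would use two classical volume estimates: the elementary Mahler-type lower bound $\mathrm{vol}(C)\,\mathrm{vol}(C^\circ)\ge 4^d/(d!)^2$ valid for any symmetric body, and the Rogers--Shephard inequality $\mathrm{vol}(C)=\mathrm{vol}(P-P)\le\binom{2d}{d}\mathrm{vol}(P)$. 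Chaining these yields $\prod_i\gamma_i=\prod_i\|u_i\|_{C^\circ}\le c_d'\cdot 2^d/\mathrm{vol}(C^\circ)\le c_d'\,2^d\,(d!)^2\,4^{-d}\,\mathrm{vol}(C)\le c\cdot\mathrm{vol}(P)$, with $c$ depending only on $d$, which is the required bound.

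Finally I would assemble the transformation. Writing $b_1,\dots,b_d$ for the primal basis of $\mathbb{Z}^d$ dual to $u_1,\dots,u_d$, the linear map $\sigma$ with $\sigma(b_i)=\mathbf{e}_i$ is unimodular, and the $i$-th coordinate of $\sigma(x)$ equals $\langle u_i,x\rangle$; hence $\sigma(P)$ has bounding box with edge lengths $\gamma_i$. Subtracting the integer vector $(m_1,\dots,m_d)$ with $m_i=\min_{x\in P}\langle u_i,x\rangle\in\mathbb{Z}$ (an integer, since the minimum is attained at a lattice vertex) produces a lattice polytope $Q$ that is unimodular equivalent to $P$ and satisfies $Q\subseteq T(\gamma)$. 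As $\sigma$ and the translation preserve volume, the bound $\prod_i\gamma_i\le c\cdot\mathrm{vol}(P)$ carries over verbatim, giving the conclusion (and in fact showing that the hypothesis that $\mathrm{vol}(P)$ be large is not needed).

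I expect the main obstacle to be the first step of the middle paragraph: passing from the successive minima of $C^\circ$ to an actual lattice basis whose $C^\circ$-norms are controlled, since the vectors realizing the successive minima need not themselves form a basis. This is handled by standard lattice reduction (a Minkowski- or Korkine--Zolotareff-reduced basis suffices, with a loss depending only on $d$; see \cite{Gruber}), but it is the step where the dimension-dependent constant is genuinely incurred. The conceptual crux, by contrast, is the duality bridge of the first paragraph, which reframes ``thin directions of $P$'' as ``short dual vectors in the $\|\cdot\|_{C^\circ}$ norm'' and thereby makes Minkowski's theorems applicable; verifying that the lifted minimal directions indeed form a dual basis, and that all normalizations (lattice-determinant one, factor $2^d$ in Minkowski's second theorem) are consistent, is the remaining routine bookkeeping.
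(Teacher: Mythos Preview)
The paper does not give its own proof of this lemma: it is quoted with the attribution ``cf.~\cite{Vershik}'' and immediately used, so there is nothing in the paper to compare against.

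Your proof is correct and follows a standard route: identify the width $w(P,u)$ with the gauge $\|u\|_{C^\circ}$ for $C=P-P$, apply Minkowski's second theorem to $C^\circ$, upgrade from successive minima to an honest basis via reduction theory, and convert $1/\mathrm{vol}(C^\circ)$ into $\mathrm{vol}(P)$ via a volume-product lower bound together with Rogers--Shephard. The only place to tighten is the specific constant $4^d/(d!)^2$ you quote for the Mahler-type lower bound, which is not a standard off-the-shelf statement; but all you actually need is \emph{some} dimension-dependent lower bound on $\mathrm{vol}(C)\,\mathrm{vol}(C^\circ)$, and that follows at once from John's theorem (sandwich $C$ between an ellipsoid $E$ and $\sqrt{d}\,E$, then use that $\mathrm{vol}(E)\,\mathrm{vol}(E^\circ)$ equals the square of the unit-ball volume). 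Alternatively one can bypass the polar body entirely and reduce a basis directly with respect to the norm induced by $C$ or its John ellipsoid, which is closer to how B\'ar\'any and Vershik argue; either variant yields the same conclusion with a constant depending only on $d$. Your closing observation that the ``large enough volume'' hypothesis is superfluous is also correct.
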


By the above lemma, we have:
\begin{lem}\label{lembox}
    There is a constant $c$, such that for any convex lattice polytope $P$ with volume $V$, there exists a lattice polytope $Q$ that is unimodular equivalent to $P$ and lies in the cube 
    \begin{equation*}
        T(cV)\coloneqq\{x=\sum_{i=1}^d\xi\mathbf{e}_i\in\mathbb{R}^d:\ 0\leqslant\xi_i\leqslant cV\}.
    \end{equation*}
\end{lem}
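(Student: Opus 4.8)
The plan is to obtain Lemma~\ref{lembox} as an essentially immediate consequence of the preceding lemma of Bárány and Vershik, the only real content being to pass from a box $T(\gamma)$ of bounded \emph{product} of side lengths to a cube of bounded side length. First I would apply that lemma to the given polytope $P$ of volume $V$: this produces a convex lattice polytope $Q$, unimodular equivalent to $P$, together with a vector $\gamma=(\gamma_i)\in\mathbb{R}^d$ with $Q\subseteq T(\gamma)$ and $\prod_{i=1}^{d}\gamma_i\leqslant c\cdot\mathrm{vol}(P)$.

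The key step is to normalise the box. After an integer translation (which preserves unimodular equivalence with $P$) we may assume that $Q$ is flush against the coordinate hyperplanes, and then we may take $\gamma_i$ to be exactly the width of $Q$ in the $i$-th coordinate direction, i.e.\ $\gamma_i=\max_{v\in\mathrm{vert}(Q)}v_i-\min_{v\in\mathrm{vert}(Q)}v_i$; shrinking $\gamma$ in this way only strengthens the inequality $\prod_i\gamma_i\leqslant c\cdot\mathrm{vol}(P)$. Each such $\gamma_i$ is now a difference of integers, hence a nonnegative integer, and in fact $\gamma_i\geqslant 1$, since $\gamma_i=0$ would force $Q$ into an affine hyperplane $\{x_i=\text{const}\}$, contradicting $\dim Q=d$. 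Consequently, for every index $j$,
\[
    \gamma_j\;\leqslant\;\gamma_j\prod_{i\neq j}\gamma_i\;=\;\prod_{i=1}^{d}\gamma_i\;\leqslant\;c\cdot\mathrm{vol}(P)\;\leqslant\;c\cdot V,
\]
where in the last inequality I absorb the factor $d!$ relating the normalised and Euclidean volumes into the constant, which still depends only on $d$. Hence $Q\subseteq T(\gamma)\subseteq T(cV)$, which is the assertion of the lemma.

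I do not anticipate a genuine obstacle. The two points deserving a line of care are (i) that the bounding box can be taken with integer side lengths at least $1$ — this uses only that $Q$ is a full-dimensional lattice polytope and that the projection of a polytope onto a coordinate axis is the segment between the extreme vertex coordinates — and (ii) bookkeeping of the normalisation of volume, which at worst inflates $c$ by $d!$. Both are routine, so the proof reduces to the short deduction above.
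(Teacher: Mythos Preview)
Your proposal is correct and follows exactly the route the paper intends: the paper simply writes ``By the above lemma, we have'' before stating Lemma~\ref{lembox}, so the entire content is the passage from a box $T(\gamma)$ with bounded $\prod_i\gamma_i$ to a cube, which you carry out correctly by shrinking the $\gamma_i$ to the integer coordinate widths of $Q$ (each $\geqslant 1$ by full-dimensionality) and bounding $\gamma_j\leqslant\prod_i\gamma_i\leqslant cV$. The only point not explicitly handled is the ``large enough volume'' hypothesis in the B\'ar\'any--Vershik lemma, but the paper ignores this as well, and it is harmless since for bounded volume there are only finitely many unimodular classes.
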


Now we turn to prove Theorem \ref{relatKandV}.

\begin{proof}[Proof of Theorem \ref{relatKandV}]
Without loss of generality, we let polytopes in $L_V^d$ be in $T(cV)$.

We can easily show that the polytopes in $L_V^d$ are unimodular non-equivalent to each other. By Lemma \ref{Vminiff} we know that there is a unique $V_{min}$ for an affine equivalence class. So polytopes in different $A_{V/i}^d$ for different $i$ will be in different affine equivalence classes, which leads to that polytopes in different $i\cdot A_{V/i}^d$ for different $i$ will not be unimodular equivalent. Furthermore, by definition of $A_{V/i}^d$, the polytopes in the same $A_{V/i}^d$ are affinely non-equivalent, so those in the same $i\cdot A_{V/i}^d$ are unimodular non-equivalent.

Suppose $K^d_V$ containing $L_V^d$ and also lying in $T(cV)$. By Theorem \ref{relequiv}, Remark \ref{remrelequiv} and the fact that $T(cV)=cV\cdot T(1)$, we have
\begin{equation*}
    |PV_{d,r}(T(1))|\ll (cV)^{d^2+2d},
\end{equation*}
where
\begin{equation*}
    PV_{d,r}(T(1))\coloneqq \{\varphi\ \text{is an affine transformation: } \exists\ \text{convex lattice polytope}\  P_1,P_2\subseteq T(1)\ s.t.\ \varphi(P_1)=P_2\}.
\end{equation*}

For any convex lattice polytope $P$ with normalized volume $V$, let the affine sublattice generated by $P$ be $L(P)$ and suppose $[L(P):\mathbb{Z}^d]=i\leqslant V$. Then there will be a polytope in $A^d_{V/i}\subseteq L_V^d$ affinely equivalent to $P$. By $\log|K_V^d|\geqslant\log|L_V^d|\gg V^{(d-1)/(d+1)}$, we have
\begin{equation*}
    \frac{\log|K_V^d|}{\log|L_V^d|}\ll\frac{\log(|L_V^d|\cdot cV^{d^2+2d})}{\log|L_V^d|}=1+\frac{\log(cV^{d^2+2d})}{\log|L_V^d|}\rightarrow1\quad(V\rightarrow\infty).
\end{equation*}
\end{proof}

\begin{rem}
    If we can further show that
    \begin{equation}\label{eqlogLandA}
        \log|L_V^d|/\log|A_{V}^d|\rightarrow1\ ( V\rightarrow\infty),
    \end{equation}
    or
    \begin{equation}\label{eqLandA}
        |L_V^d|/|A_{V}^d|\rightarrow1\ ( V\rightarrow\infty),
    \end{equation}
    then the above theorem will yield that $\log|K_V^d|/\log|L_{V}^d|\rightarrow1\ (V\rightarrow\infty)$, which shows that nearly all (in the sense of logarithm) representatives with volume $V$ under unimodular equivalence can not be further "shrunk" by affine transformations. Since
    \begin{equation*}
        |L_V^d|=\sum_{i=1}^V|A_{V/i}^d|,
    \end{equation*}
    equation (\ref{eqLandA}) is equivalent to
    \begin{equation*}
        \left(\sum_{i=2}^V|A_{V/i}^d|\right)/A_V^d\rightarrow0\ (V\rightarrow\infty),
    \end{equation*}
    which seems to be true, because $(V-V/i)\rightarrow\infty$ when $V\rightarrow\infty$ for any $1\leqslant i\leqslant V$. Or if 
    \begin{equation*}
        \lim_{V\rightarrow\infty}\frac{\log|A_V^d|}{V^{(d-1)/(d+1)}}
    \end{equation*}
    exists, then equation (\ref{eqlogLandA}) will be possible to be proved. So we propose the following problem and conjectures.
\end{rem}

\begin{prob}
    Does
    \begin{equation*}
        \lim_{V\rightarrow\infty}\frac{\log|A_V^d|}{V^{(d-1)/(d+1)}}
    \end{equation*}
    exist or not? If it exists, determine this limit.
\end{prob}

\begin{conj}
    Let $K_V^d$ and $L_V^d$ be defined as above. Then
    \begin{equation*}
        \log|K_V^d|/\log|L_{V}^d|\rightarrow1\ (V\rightarrow\infty).
    \end{equation*}
\end{conj}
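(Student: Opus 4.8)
\subsection*{Proof proposal for the conjecture}

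The content of the conjecture, in view of the Remark preceding it and Theorem~\ref{relatKandV}, is that $\log|K_V^d|=(1+o(1))\log|A_V^d|$. Since $|L_V^d|=\sum_{i=1}^{V}|A_{V/i}^d|\ge|A_V^d|$ and Theorem~\ref{relatKandV} already gives $\log|K_V^d|=(1+o(1))\log|L_V^d|$, this is equivalent to \eqref{eqlogLandA}, i.e.\ $\log|L_V^d|=(1+o(1))\log|A_V^d|$. Because $V^{(d-1)/(d+1)}\ll\log|A_V^d|\ll V^{(d-1)/(d+1)}$ (Theorem~\ref{lowerboundadm}, together with $|A_V^d|=a(d,V)\le v(d,V)$ and \eqref{improvedbound1}), it is in turn enough to prove
\begin{equation*}
    \log\Bigl(\sum_{i=2}^{V}|A_{V/i}^d|\Bigr)\le\log|A_V^d|+o\bigl(V^{(d-1)/(d+1)}\bigr),
\end{equation*}
and the cleanest sufficient statement is the stronger \eqref{eqLandA}, namely $\sum_{i=2}^{V}|A_{V/i}^d|=o(|A_V^d|)$.

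The route I would take toward \eqref{eqLandA} is an \emph{extension map} $A_{V/i}^d\times\mathcal F_i(V)\hookrightarrow A_V^d$ for each $i\ge2$, with $|\mathcal F_i(V)|$ a fixed power of $V$. Fix a representative $Q\in A_{V/i}^d$; by Lemma~\ref{lembox} we may place it in a cube of side $O(V)$, and $L(Q)=\mathbb Z^d$ by Lemma~\ref{Vminiff}. Attach to $Q$ a small lattice ``cap'' $C$ (a pyramid over a suitable facet of $Q$, or a single far vertex when $Q$ is flat enough to leave room), chosen so that $\mathrm{conv}(Q\cup C)$ has normalized volume exactly $V$; such a polytope automatically has affine sublattice $\mathbb Z^d$ (its vertices include those of $Q$ together with lattice points), hence lies in $A_V^d$ by Lemma~\ref{Vminiff}. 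If the cap is recoverable from $\mathrm{conv}(Q\cup C)$ — e.g.\ as the vertex (or cluster) whose deletion drops the volume far more than any other deletion — then removing it recovers $[Q]$, so distinct classes $[Q]$ contribute disjointly and $|A_V^d|\ge|A_{V/i}^d|\cdot\min_{[Q]}\lceil p(Q,V)/|\mathrm{Aut}_{\mathrm{aff}}(Q)|\rceil$, where $p(Q,V)$ counts admissible caps and $|\mathrm{Aut}_{\mathrm{aff}}(Q)|\le f_0(Q)^{d+1}=\mathrm{poly}(V)$ for fixed $d$. A volume-to-lattice-point count (the admissible single-vertex caps lie on a piecewise-linear locus of $(d-1)$-measure of order $V^{d-1}$, cut into $\mathrm{poly}(V)$ combinatorial pieces) should give $p(Q,V)\gg V^{\gamma}$; once $\gamma$ exceeds the automorphism exponent by more than $1$, we get $|A_{V/i}^d|\le|A_V^d|/V^{1+\varepsilon}$, so $\sum_{i=2}^{V}|A_{V/i}^d|\le|A_V^d|/V^{\varepsilon}$ and \eqref{eqLandA} follows. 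A shorter conditional alternative: if $\lambda_d:=\lim_{V\to\infty}\log|A_V^d|/V^{(d-1)/(d+1)}$ exists (the Problem above), then for $i\ge2$ one has $\log|A_{V/i}^d|\le(1+o(1))\,2^{-(d-1)/(d+1)}\lambda_d\,V^{(d-1)/(d+1)}$ with a constant strictly below that of $\log|A_V^d|$, whence $\sum_{i\ge2}|A_{V/i}^d|=|A_V^d|^{\,\theta+o(1)}$ with $\theta=2^{-(d-1)/(d+1)}<1$, and \eqref{eqlogLandA} is immediate.

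The main obstacle is building the extension map uniformly over all $Q$. Hitting the exact target forces the cap to carry volume $V-V/i\gtrsim V$, so when $Q$ is ``fat'' (large facets, inradius not too small) the cap cannot sit far from $Q$; one then needs a lemma that deleting a vertex of a fat lattice polytope changes its volume by $o(V)$, so that the cap apex remains the unique ``deepest'' vertex and is recoverable. At the other extreme, polytopes $Q$ of small lattice width are exceptional for the map, but they are few — a lattice polytope of width $\le w_0$ is determined by its $O_{w_0}(1)$ hyperplane slices, each a $(d-1)$-dimensional lattice polytope, so it has $\exp(O(W^{(d-2)/d}))=\exp(o(W^{(d-1)/(d+1)}))$ affine classes of volume $W$ — so they can be absorbed into the error term, and an intermediate regime must be interpolated between these two. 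A second difficulty: in low dimension the single-vertex count $\gamma$ is too small to beat the automorphism exponent (for $d=2$ the admissible locus carries only $O(V)$ lattice points while a symmetric lattice polygon of volume $V$ can have $\Omega(V^{2})$ affine automorphisms), forcing caps with a bounded number $k=k(d)$ of vertices and a count of convex lattice ``arcs'' of $k$ vertices — routine in spirit, combinatorially delicate in practice. Finally, the conditional route does not escape these issues: $\log|A_V^d|$ grows with exponent $(d-1)/(d+1)<1$, so Fekete's subadditivity is unavailable, and proving the existence of $\lambda_d$ would itself demand a new ``merging'' construction combining two near-extremal configurations of volumes $V_1,V_2$ into one of volume $V_1+V_2$ in multiplicatively many ways — essentially a harder version of the extension map.
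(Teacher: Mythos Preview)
The statement you address is presented in the paper as a \emph{conjecture}, and as literally written it coincides verbatim with Theorem~\ref{relatKandV}, which the paper has already proved. This is evidently a typo: from the preceding Remark (and its gloss that ``nearly all representatives with volume $V$ under unimodular equivalence cannot be further shrunk by affine transformations'') the intended content is $\log|K_V^d|/\log|A_V^d|\to 1$, equivalently \eqref{eqlogLandA}. You have correctly identified this intended meaning, but the paper supplies no proof of it --- it is explicitly left open, alongside the accompanying Problem and the stronger conjecture on $|L_V^d|/|A_V^d|$. There is therefore no proof in the paper to compare your attempt against.

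What you have written is a research programme, not a proof, and you say so yourself. The extension map $A_{V/i}^d\times\mathcal F_i(V)\hookrightarrow A_V^d$ is never actually constructed; both the recoverability of the cap (injectivity) and the lower bound $p(Q,V)\gg V^{\gamma}$ with $\gamma$ large enough are stated as expectations (``should give'') rather than established. The uniformity over all $Q$ that you flag as the main obstacle is genuine: the cap must absorb volume of order $V$, comparable to the whole target, so for many shapes of $Q$ the cap dominates the resulting polytope and recovering $Q$ from it is not obviously possible. (Incidentally, your claim that a planar $Q$ can have $\Omega(V^{2})$ affine automorphisms contradicts your own earlier bound $f_0(Q)^{d+1}=O(V)$; in fact the affine automorphism group of a convex polygon is at most dihedral of order $\le 2f_0(Q)=O(V^{1/3})$, so the $d=2$ obstruction is milder than you suggest --- though this does not rescue the argument.) The conditional route via the existence of $\lambda_d$ is, as you observe, at least as hard. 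In sum, you have reduced the question correctly and sketched a plausible line of attack, but none of the key steps is carried out; your proposal matches the open status the paper itself assigns to the statement.
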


\begin{conj}
    Let $A_V^d$ and $L_V^d$ be defined as above. Then
    \begin{equation*}
        |L_V^d|/|A_{V}^d|\rightarrow1\ (V\rightarrow\infty),
    \end{equation*}
    or
    \begin{equation*}
        \left(\sum_{i=2}^V|A_{V/i}^d|\right)/A_V^d\rightarrow0\ (V\rightarrow\infty).
    \end{equation*}
    A weaker version of this conjecture is 
    \begin{equation*}
        \log|L_V^d|/\log|A_{V}^d|\rightarrow1\ (V\rightarrow\infty).
    \end{equation*}
\end{conj}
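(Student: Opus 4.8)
Since the statement is conjectural, what follows is a strategy rather than a proof: the plan is to reduce all three displayed assertions to the Problem stated above, after which an elementary argument finishes the job. Throughout set $\beta\coloneqq\frac{d-1}{d+1}\in(0,1)$ and recall $|A_w^d|=a(d,w)$, so that Theorem~\ref{lowerboundadm} together with $\log v(d,m)\ll m^{\beta}$ furnishes constants $0<c\leqslant C$, depending only on $d$, with
\begin{equation*}
|A_w^d|\leqslant\exp\big(C\,w^{\beta}\big)\ \text{ for every }w,\qquad |A_w^d|\geqslant\exp\big(c\,w^{\beta}\big)\ \text{ for all large }w.
\end{equation*}
Since
\begin{equation*}
|L_V^d|=\sum_{i=1}^{V}|A_{V/i}^d|=|A_V^d|+\sum_{i=2}^{V}|A_{V/i}^d|\qquad\big(A_{V/i}^d\coloneqq\emptyset\ \text{ if }i\nmid V\big),
\end{equation*}
the two forms of the strong conjecture are literally one statement, and it suffices to show $\sum_{i=2}^{V}|A_{V/i}^d|=o\big(|A_V^d|\big)$. (If $V$ is prime the only surviving term is $|A_1^d|=1$, so all the content lies in the $V$ having a small prime factor.) We shall deduce this from the existence of
\begin{equation*}
L\coloneqq\lim_{V\to\infty}\frac{\log|A_V^d|}{V^{\beta}}
\end{equation*}
asked for in the Problem; note $L\geqslant c>0$ necessarily.

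Granting that $L$ exists, fix a constant $K=K(d)$ with $C\,K^{-\beta}<L/2$ and split $\sum_{i=2}^{V}=\sum_{2\leqslant i\leqslant K}+\sum_{K<i\leqslant V}$. For the tail, the uniform bound gives, since $V/i<V/K$ and there are at most $V$ terms,
\begin{equation*}
\sum_{K<i\leqslant V}|A_{V/i}^d|\ \leqslant\ V\exp\big(C\,(V/K)^{\beta}\big)\ =\ V\exp\big(C K^{-\beta}V^{\beta}\big),
\end{equation*}
which is $o\big(\exp((L-\varepsilon)V^{\beta})\big)=o(|A_V^d|)$ as soon as $0<\varepsilon<L/2$, because then $C K^{-\beta}<L/2<L-\varepsilon$. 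For the head, $K$ being a constant we have $V/i\geqslant V/K\to\infty$ uniformly over $2\leqslant i\leqslant K$, so the existence of $L$ yields, for every fixed $\varepsilon>0$ and all large $V$, $|A_{V/i}^d|\leqslant\exp((L+\varepsilon)(V/i)^{\beta})\leqslant\exp((L+\varepsilon)2^{-\beta}V^{\beta})$, whence
\begin{equation*}
\frac{1}{|A_V^d|}\sum_{2\leqslant i\leqslant K}|A_{V/i}^d|\ \leqslant\ K\exp\Big(\big((L+\varepsilon)2^{-\beta}-(L-\varepsilon)\big)V^{\beta}\Big)\ \longrightarrow\ 0,
\end{equation*}
the bracketed coefficient being negative for $\varepsilon$ small because $2^{-\beta}<1$ forces $L\,2^{-\beta}<L$. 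Adding the two pieces gives $\sum_{i=2}^{V}|A_{V/i}^d|=o(|A_V^d|)$, i.e.\ $|L_V^d|/|A_V^d|\to1$, and \emph{a fortiori} $\log|L_V^d|/\log|A_V^d|\to1$. We note that \emph{without} the limit these same estimates only yield $\limsup_{V\to\infty}\log|L_V^d|/\log|A_V^d|\leqslant C\,2^{-\beta}/c$; so even the weak form of the conjecture is, at present, equivalent to the assertion that the optimal constants in $\log a(d,m)\ll m^{\beta}$ and in $m^{\beta}\ll\log a(d,m)$ differ by less than the factor $2^{\beta}$, and in particular follows from the Problem.

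The main obstacle is thus the Problem itself: showing that $L=\lim_{V\to\infty}\log|A_V^d|/V^{\beta}$ exists, its $\limsup\leqslant C$ and $\liminf\geqslant c$ being already known. Two routes seem available. First, one may try to sharpen \emph{both} Bárány's construction (Proposition~\ref{baranysconstruction}) and the general counting upper bound to \emph{matching leading constants}; this would require a precise asymptotic --- not merely the order supplied by Lemma~\ref{barafkbounds} --- for the number of vertices of $\mathrm{conv}(B_r^d\cap\mathbb{Z}^d)$, an equally precise count of the admissible cap-shavings, and a matching upper bound on the number of classes with a prescribed $V_{min}$. Second, one may try to establish an approximate super-multiplicativity $\log a(d,V_1+V_2)\geqslant\log a(d,V_1)+\log a(d,V_2)-E(V_1,V_2)$ by gluing representatives of affine classes of normalized volumes $V_1$ and $V_2$ into classes of normalized volume $V_1+V_2$ --- arranged so that the glued polytopes still generate $\mathbb{Z}^d$ affinely (Lemma~\ref{Zdiff}), hence still attain their $V_{min}$ (Lemma~\ref{Vminiff}), and with a controllably small error $E$ --- and then run a Fekete-type argument \emph{adapted to the sublinear normalization} $V^{\beta}$ rather than $V$. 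The difficulty should concentrate in the second route: one must retain control of $V_{min}$ under the gluing, exclude accidental affine coincidences that would deflate the count, and --- the genuinely non-routine step --- upgrade an approximate subadditivity to convergence of $\log a(d,V)/V^{\beta}$, convergence of $\log a(d,V)/V$ being trivially to $0$ and hence useless. Until one of these is carried out, the conjecture must remain open.
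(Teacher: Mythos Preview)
The statement is labelled a \emph{Conjecture} in the paper and is not proved there; the paper only remarks, just before stating the Problem, that if the limit $\lim_{V\to\infty}\log|A_V^d|/V^{(d-1)/(d+1)}$ exists then the weak form ``will be possible to be proved''. There is thus no paper proof to compare against.

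Your proposal is a genuine sharpening of that remark: you carry out the reduction in full and show that the existence of the limit $L$ implies not only the weak logarithmic statement but also the strong statement $|L_V^d|/|A_V^d|\to 1$. The head/tail split argument is correct. The key observations --- that the tail $\sum_{i>K}$ is beaten by choosing $K$ so that $CK^{-\beta}<L/2$, and that each head term has $(V/i)^\beta\leqslant 2^{-\beta}V^\beta$ so that $(L+\varepsilon)2^{-\beta}<L-\varepsilon$ for small $\varepsilon$ kills the ratio --- are clean and complete. Your side computation that, absent the limit, the same splitting only yields $\limsup\log|L_V^d|/\log|A_V^d|\leqslant C\,2^{-\beta}/c$ is also correct (this comes from bounding $\log(1+\text{stuff}/|A_V^d|)\leqslant(C2^{-\beta}-c)V^\beta+O(\log V)$ and dividing by $\log|A_V^d|\geqslant cV^\beta$), and it usefully isolates what is actually needed: either the limit exists, or the optimal upper and lower constants satisfy $C<2^{\beta}c$.

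Your discussion of how one might attack the Problem itself --- matching the constants in B\'ar\'any's construction and in the upper bound, or a gluing/Fekete approach --- is honest about the obstacles, in particular that subadditivity for $\log a(d,V)/V$ is useless because the relevant normalization is the sublinear $V^\beta$. This is all reasonable speculation, and you correctly flag it as such. The conjecture remains open.
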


\begin{rem}
    One might expect that $L_V^d$ itself could be a set of representatives with volume $V$ for unimodular equivalence. But this is not always true. Since $L_V^d$ forms a set of representatives of all lattice polytopes with volume not larger than $V$ for affine equivalence. So for any convex lattice polytope $P_1$ with volume $V$, there will be a unique polytope $P_2$ in $L_V^d$ such that $P_1$ and $P_2$ are affinely equivalent. Then if there is a polytope in $L_V^d$ that is unimodular equivalent to $P_1$, it must be $P_2$. But $P_1$ and $P_2$ can be not unimodular equivalent. For instance, every two simplices with the same volume are affinely equivalent, but they can be not unimodular equivalent. Specifically, the following two lattice polygons in $\mathbb{Z}^2$ have the same volume:
\begin{equation*}
    P_1=\begin{bmatrix}
        9 & 0\\
        0 & 10
    \end{bmatrix}
    \ and\ 
    P_2=\begin{bmatrix}
        6 & 0\\
        0 & 15
    \end{bmatrix},
\end{equation*}
where every column represents a vertex. But we have
\begin{equation*}
    P_1^{-1}P_2=
    \begin{bmatrix}
        \frac{2}{3} & 0\\
        0 & \frac{3}{2}
    \end{bmatrix},
\end{equation*}
which is not a unimodular matrix.

In fact, $L_V^d$ can form a set of representatives of lattice polytopes with volume $V$ for "unimodular affine equivalence", i.e., two lattice polytopes $P_1$ and $P_2$ are unimodular affine equivalent if and only if there is an affine transformation with determinant 1 that transform $P_1$ to $P_2$.
\end{rem}

\section{Classification of convex lattice polytopes in a ball}
Now we introduce another lemma and then yield the bounds of $|K_r^d|$.

\begin{lem}[cf. \cite{Vershik}]
    Let $ v(d,m) $ be the set of equivalence classes of convex lattice polyhedra in $ d $-dimensional space with normalized volume at most $ m $. Then we have:
    \begin{equation}\label{volnum}
        \log v(d, m) \ll m^{\frac{d-1}{d+1}},
    \end{equation}
    where the constant involved depends only on the dimension $ d $.
\end{lem}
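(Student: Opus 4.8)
This estimate coincides with the upper bound \eqref{improvedbound1} of Bárány and Vershik, so the plan is to recall their argument; the rest of the section may then use \eqref{volnum} as a black box. Throughout, ``volume'' without qualification means ordinary volume, which for a lattice polytope of normalized volume at most $m$ is $\ll m$. The first step is to pass from equivalence classes to honest polytopes sitting in a thin box. Choose one representative from each of the $v(d,m)$ classes. By the Bárány--Vershik box lemma underlying Lemma \ref{lembox}, each representative is unimodular equivalent to a polytope contained in a parallelepiped $[0,\gamma_1]\times\cdots\times[0,\gamma_d]$ with $\prod_{i=1}^d\gamma_i\ll m$; since the polytope is $d$-dimensional we have $\gamma_i\geqslant 1$ for all $i$, so after enlarging the box we may assume every $\gamma_i$ is a positive integer $\leqslant cm$, and then at most $\mathrm{poly}(m)$ box shapes $\gamma$ occur. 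It therefore suffices to bound, for a fixed such box, the number of convex lattice polytopes contained in it by $\exp\{O(m^{\frac{d-1}{d+1}})\}$ and sum over the $\mathrm{poly}(m)$ shapes, which does not affect the order of the exponent.

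The crude count already reproves Arnold's bound and makes clear what must be improved. A convex lattice polytope of volume $\ll m$ has $\ll m^{\frac{d-1}{d+1}}$ vertices (Andrews' theorem, of the same order as Lemma \ref{barafkbounds}), and the box contains $\ll m$ lattice points, so such a polytope is determined by one of at most $\binom{O(m)}{O(m^{\frac{d-1}{d+1}})}=\exp\{O(m^{\frac{d-1}{d+1}}\log m)\}$ vertex subsets. The whole point of \cite{Vershik} is to delete this spurious factor $\log m$.

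To do so, I would split $\partial P$ into a number of ``caps'' depending only on $d$, each being the graph of a concave piecewise-linear function over a $(d-1)$-dimensional base obtained by projecting the box onto a coordinate hyperplane, and encode a cap by the list of edge- (equivalently facet-) direction vectors of the underlying convex surface, read in the order forced by convexity. The crucial point is that along a concave lattice surface these direction vectors must be spread out, so both their number and their sizes are governed by the area of the projected base rather than by the ambient lattice; a direct count of the admissible slope sequences over a base of area $A$ gives only $\exp\{O(A^{\frac{d-1}{d+1}})\}$ possibilities, with no logarithmic loss. Since each projected base has area $\ll m$, multiplying the $O(1)$ caps together yields the desired $\exp\{O(m^{\frac{d-1}{d+1}})\}$ bound for a single box.

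The main obstacle is precisely this last step: formalising ``admissible concave lattice surface over a base of bounded area'' and proving there are at most $\exp\{O(A^{\frac{d-1}{d+1}})\}$ of them. This is where convexity must be used quantitatively -- roughly, through the trade-off between the number of distinct primitive slopes a concave surface may use and the area it is then forced to sweep out -- and it is the technical heart of \cite{Vershik}, which I would cite rather than reproduce in full. Granting it, the combination of the reduction to thin boxes, the cap decomposition and per-box count, the sum over $\mathrm{poly}(m)$ box shapes, and the elementary fact that passing from lattice polytopes to unimodular equivalence classes only decreases the count gives $\log v(d,m)\ll m^{\frac{d-1}{d+1}}$, as claimed.
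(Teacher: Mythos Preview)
The paper does not prove this lemma at all: it is stated with the attribution ``cf.~\cite{Vershik}'' and used as a black box, exactly as \eqref{improvedbound1} was already recorded in the introduction. Your proposal goes further than the paper by sketching the Bárány--Vershik argument (reduction to thin boxes, cap decomposition, counting concave lattice surfaces), but since you also ultimately defer the technical core to \cite{Vershik}, your treatment and the paper's are effectively the same --- a citation --- with yours simply being more informative about what lies behind it.
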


\begin{thm}\label{bounds}
    Let $r$ be an integer and $K_{d,r}$ be defined as above, then we have
    \begin{equation*}
        r^{d\frac{d-1}{d+1}}\ll\log|A_r^d|\leqslant\log|K_r^d|\ll r^{d\frac{d-1}{d+1}},
    \end{equation*}
    where the implied coefficients both depend only on $d$.
\end{thm}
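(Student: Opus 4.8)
The chain of inequalities splits into three independent parts, and I would handle them in order of increasing effort. The middle inequality $\log|A_r^d|\leqslant\log|K_r^d|$ is immediate: unimodular equivalence refines affine equivalence, so each affine class of convex lattice polytopes in $B_r^d$ is a union of unimodular classes, whence $|A_r^d|\leqslant|K_r^d|$. So the real content is the two outer bounds.

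For the upper bound $\log|K_r^d|\ll r^{d\frac{d-1}{d+1}}$, I would argue that every convex lattice polytope $P\in H_r^d$ has $P\subseteq B_r^d$, hence $\mathrm{vol}(P)\leqslant\mathrm{vol}(B_r^d)\ll r^d$, so its normalized volume $d!\cdot\mathrm{vol}(P)$ is at most $c_d r^d$ for some constant $c_d$ depending only on $d$. Therefore every unimodular class represented in $K_r^d$ is already counted by $v(d,\lfloor c_d r^d\rfloor)$, and applying \eqref{volnum} with $m=\lfloor c_d r^d\rfloor$ gives
\begin{equation*}
    \log|K_r^d|\leqslant\log v(d,\lfloor c_d r^d\rfloor)\ll (c_d r^d)^{\frac{d-1}{d+1}}\ll r^{d\frac{d-1}{d+1}}.
\end{equation*}

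For the lower bound $r^{d\frac{d-1}{d+1}}\ll\log|A_r^d|$, I would reuse the explicit family built in the proof of Theorem \ref{relequiv}: the $2^{f_0(P_r^d)}$ polytopes $\mathrm{conv}\big[(LP_r^d)\setminus W\big]$ with $W\subseteq V_r^d$, all of which lie in $B_r^d$. Since two of them can be affinely equivalent only through a transformation in $PV_{d,r}$, the number of pairwise affinely non-equivalent polytopes among them is at least $2^{f_0(P_r^d)}/|PV_{d,r}|$; combining this with the bound $|PV_{d,r}|\ll r^{d^2+2d}$ from Theorem \ref{relequiv}(1) and taking logarithms yields
\begin{equation*}
    \log|A_r^d|\geqslant f_0(P_r^d)\log 2-(d^2+2d)\log r-O(1)\gg r^{d\frac{d-1}{d+1}},
\end{equation*}
where the last step uses \eqref{maxballver}, namely $f_0(P_r^d)\gg r^{d\frac{d-1}{d+1}}$. (Equivalently, one can simply invoke Theorem \ref{relequiv}(2), which gives $\log|A_r^d|\sim\log|H_r^d|\geqslant f_0(P_r^d)\log2\gg r^{d\frac{d-1}{d+1}}$ for large $r$, absorbing small $r$ into the implied constant.)

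There is no genuine obstacle here; the single point that needs care is ensuring that the polynomial correction term — the factor $|PV_{d,r}|\ll r^{d^2+2d}$ dividing the count of affine classes, i.e.\ the additive $(d^2+2d)\log r$ after taking logs — is of strictly smaller order than the main term $r^{d\frac{d-1}{d+1}}$. This is exactly what the vertex count estimate \eqref{maxballver} of Lemma \ref{barafkbounds} guarantees, since $r^{d\frac{d-1}{d+1}}$ grows polynomially in $r$ while $(d^2+2d)\log r$ is only logarithmic. The implied constants throughout depend only on $d$, as required.
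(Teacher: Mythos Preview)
Your proposal is correct and follows essentially the same approach as the paper's proof: the upper bound via $\log v(d,m)\ll m^{(d-1)/(d+1)}$ applied with $m\asymp r^d$, and the lower bound via the inequality $|A_r^d|\geqslant |H_r^d|/|PV_{d,r}|$ from Theorem~\ref{relequiv} together with $\log|H_r^d|\geqslant f_0(P_r^d)\gg r^{d(d-1)/(d+1)}$. The only cosmetic difference is that you unpack the construction of the $2^{f_0(P_r^d)}$ polytopes explicitly, whereas the paper simply cites Theorem~\ref{relequiv} as a black box.
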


\begin{proof}
    First, we prove the lower bound. By Theorem \ref{relequiv}, we have
    \begin{equation*}
        \log|K_r^d|\geqslant\log|A_r^d|\geqslant\log\frac{|H_r^d|}{|PV_{d,r}|}\gg r^{d\frac{d-1}{d+1}}-\log r^{d^2+2d}\gg r^{d\frac{d-1}{d+1}}.
    \end{equation*}
    
    Next we prove the upper bound. The volume of polyhedra in $ O_{d,r} $ is less than the volume of the sphere
    \begin{equation*}
        \mathrm{vol}(B_r^d)=\frac{\pi^{d/2}}{\Gamma(1+\frac{d}{2})}r^d.
    \end{equation*}
    Thus, by equation (\ref{volnum}), we have
    \begin{align*}
        \log\left|K_r^d\right| &\leqslant \log v(d,d!v(B_r^d)) \\
        &\ll d!v(O_{d,r})^{\frac{d-1}{d+1}} \\
        &=c(d)r^{d\frac{d-1}{d+1}} \\
        &\ll r^{d\frac{d-1}{d+1}}.
    \end{align*}
    where
    \begin{equation*}
        c(d)=d!\frac{\pi^{d/2}}{\Gamma(1+\frac{d}{2})}
    \end{equation*}
    is a constant depending only on the dimension $ d $. The upper bound is proved.
\end{proof}

Next, we discuss the lattice polytopes in $B_r^d$ that have specific volumes. Let $H_r^d(V)$ be the set of lattice polytopes which lie in $B_r^d$ and have volume $V/d!$. To make our discussion meaningful, we suppose $V/d!\leqslant\mathrm{vol}(B_r^d)$. We introduce the following lemma.
\begin{lem}[cf. \cite{Barany1}]
    \begin{equation*}
        B^d_{r-2\sqrt{d}}\subseteq\mathrm{conv}(LP_r^d\backslash V_r^d).
    \end{equation*}
\end{lem}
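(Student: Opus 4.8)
($B^d_{r-2\sqrt{d}}\subseteq\mathrm{conv}(LP_r^d\backslash V_r^d)$).

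The plan is to start from the known containment $B^d_{r-\sqrt{d}}\subseteq\mathrm{conv}(B^d_r\cap\mathbb{Z}^d)=P^d_r$ (Lemma~\ref{Qrvertlem}), and then argue that deleting the vertex set $V^d_r$ from $LP^d_r$ can only shrink the convex hull by caps of width at most $\sqrt{d}$, so that a slightly smaller concentric ball still survives. Concretely, I would fix a point $y$ with $\|y\|\le r-2\sqrt{d}$ and show $y\in\mathrm{conv}(LP^d_r\backslash V^d_r)$. If $y$ already lies in $\mathrm{conv}(LP^d_r\backslash V^d_r)$ we are done, so suppose not; then $y$ lies in some cap $C=H^+\cap P^d_r$ cut off by a hyperplane $H$ that separates a deleted vertex $v\in V^d_r$ from the rest, with $H^+$ the closed halfspace not meeting $\mathrm{conv}(LP^d_r\backslash V^d_r)$. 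The key geometric claim is that every such cap has width at most $\sqrt{d}$: since $v$ is a vertex of $P^d_r=\mathrm{conv}(B^d_r\cap\mathbb{Z}^d)$, any hyperplane through $v$ that keeps all other lattice points of $B^d_r$ on one side cannot penetrate more than distance $\sqrt{d}$ into the ball, because an integer translate of the relevant unit cube based at $v$ must contain a lattice point of $B^d_r$ on the far side (this is precisely the width estimate underlying Lemma~\ref{Qrvertlem} and Lemma 2.3 of \cite{Barany1}).

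Granting the width bound, I would finish as follows. The union $\bigcup_{v\in V^d_r}\bigl(H^+_v\cap P^d_r\bigr)$ of all the shaved caps is contained in the shell $\{x\in P^d_r:\ \mathrm{dist}(x,\partial P^d_r)\le\sqrt{d}\}$, hence in the shell $\{x:\ r-2\sqrt{d}<\|x\|\le r\}$ once one notes that $B^d_{r-\sqrt{d}}\subseteq P^d_r$ forces the boundary of $P^d_r$ to lie in $\{r-\sqrt{d}\le\|x\|\le r\}$ and adds another $\sqrt{d}$ for the cap depth. Therefore $B^d_{r-2\sqrt{d}}\subseteq P^d_r\setminus\bigcup_v(H^+_v\cap P^d_r)\subseteq\mathrm{conv}(LP^d_r\backslash V^d_r)$, where the first inclusion uses Lemma~\ref{Qrvertlem} to place $B^d_{r-2\sqrt{d}}$ inside $P^d_r$ and the shell estimate to keep it out of the caps, and the second inclusion is immediate since removing the vertices removes exactly these caps.

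The main obstacle is making the width bound on the shaved caps fully rigorous: one must show that for a vertex $v$ of $\mathrm{conv}(B^d_r\cap\mathbb{Z}^d)$, any supporting-type hyperplane isolating $v$ from the other lattice points of the ball stays within distance $\sqrt{d}$ of $v$. The clean way is a pigeonhole on the integer lattice: walk from $v$ toward the interior in unit steps along a coordinate direction chosen so as to stay in $B^d_r$ for at least $\sqrt{d}$ units; the first such lattice point is a lattice point of $B^d_r$ distinct from $v$, hence it lies on the $P^d_r$-side of any hyperplane isolating $v$, bounding the cap's width. A minor secondary point is the degenerate regime where $r-2\sqrt{d}\le 0$, in which the statement is vacuous, and the implicit standing assumption $r$ large enough that $\mathrm{conv}(LP^d_r\backslash V^d_r)$ is genuinely $d$-dimensional; both are handled in one line. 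Everything else is routine bookkeeping with the two already-quoted lemmas from \cite{Barany1}.
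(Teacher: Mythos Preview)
The paper does not supply its own proof of this lemma; it is quoted directly from \cite{Barany1}. So there is nothing to compare against, and the question is simply whether your argument is correct.

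It is not, and the gap is in the set-theoretic step you call ``immediate''. You assert that
\[
P^d_r\setminus\bigcup_{v\in V^d_r}\bigtriangleup_{P^d_r}(v)\ \subseteq\ \mathrm{conv}(LP^d_r\setminus V^d_r),
\]
i.e.\ that removing all vertices simultaneously removes nothing more than the union of the single-vertex caps. This is false in general: $\bigcap_{v}\mathrm{conv}(LP^d_r\setminus\{v\})$ can be strictly larger than $\mathrm{conv}(LP^d_r\setminus V^d_r)$. (Take the unit square $\{0,1\}^2$: each single-vertex deletion leaves a triangle, their intersection is the centre point $(1/2,1/2)$, yet $\mathrm{conv}(LP\setminus V)=\varnothing$.) Correspondingly, your earlier sentence ``then $y$ lies in some cap \dots\ that separates a deleted vertex $v$ from the rest'' is unjustified: a hyperplane separating $y$ from $LP^d_r\setminus V^d_r$ may have many vertices on the $y$-side, not just one, so your single-vertex width bound does not apply.

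The clean fix avoids caps entirely. First show that no vertex of $P^d_r$ lies in $B^d_{r-\sqrt{d}}$: by Lemma~\ref{Qrvertlem} the closed ball $B^d_{r-\sqrt{d}}$ sits inside $P^d_r$, so any lattice point of that smaller ball is a non-extreme point of $P^d_r$ (indeed, for any lattice point $p$ with $\|p\|\le r-1$ one has $p\pm e_i\in B^d_r$, so $p$ is a midpoint of lattice points and cannot be a vertex). Hence $B^d_{r-\sqrt{d}}\cap\mathbb{Z}^d\subseteq LP^d_r\setminus V^d_r$. Now apply Lemma~\ref{Qrvertlem} once more with $r$ replaced by $r-\sqrt{d}$:
\[
B^d_{r-2\sqrt{d}}\ \subseteq\ \mathrm{conv}\bigl(B^d_{r-\sqrt{d}}\cap\mathbb{Z}^d\bigr)\ \subseteq\ \mathrm{conv}(LP^d_r\setminus V^d_r).
\]
This is almost certainly the argument B\'ar\'any intends, and it is a two-line consequence of the lemma you already invoke.
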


Then we know the following straight conclusion.
\begin{thm}
    There exists a variable $c(r)=O(r^{d-1})=o(r^d)$, such that 
    \begin{equation*}
        V^{\frac{d-1}{d+1}}\ll \log |H_r^d\left(V/d!\geqslant \mathrm{vol}(B_r^d)-c(r)\right)|,
    \end{equation*}
    where $H_r^d(V/d!\geqslant \mathrm{vol}(B_r^d)-c(r))$ means the subset of $H_r^d$ consisting of those polytopes with volume larger than or equal to $\mathrm{vol}(B_r^d)-c(r)$.
\end{thm}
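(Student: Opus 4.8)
The plan is to exhibit an exponentially large family of lattice polytopes inside $B_r^d$ all having volume within $O(r^{d-1})$ of $\mathrm{vol}(B_r^d)$, namely the vertex-shavings of $P_r^d=\mathrm{conv}(LP_r^d)$. For $W\subseteq V_r^d$ put $P_W\coloneqq\mathrm{conv}(LP_r^d\setminus W)$; this is a lattice polytope contained in $B_r^d$, since $LP_r^d\setminus W\subseteq LP_r^d\subseteq B_r^d$ and $B_r^d$ is convex. By the last lemma stated above, $B^d_{r-2\sqrt d}\subseteq\mathrm{conv}(LP_r^d\setminus V_r^d)\subseteq P_W$, so each $P_W$ is full-dimensional and $\mathrm{vol}(P_W)\geqslant\mathrm{vol}(B^d_{r-2\sqrt d})$. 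Accordingly I set $c(r)\coloneqq\mathrm{vol}(B_r^d)-\mathrm{vol}(B^d_{r-2\sqrt d})=\frac{\pi^{d/2}}{\Gamma(1+d/2)}\bigl(r^d-(r-2\sqrt d)^d\bigr)$; expanding the binomial shows $c(r)=2d^{3/2}\frac{\pi^{d/2}}{\Gamma(1+d/2)}r^{d-1}+O(r^{d-2})=O(r^{d-1})=o(r^d)$. Thus every $P_W$ satisfies $\mathrm{vol}(P_W)\geqslant\mathrm{vol}(B_r^d)-c(r)$, i.e. $P_W\in H_r^d\bigl(V/d!\geqslant\mathrm{vol}(B_r^d)-c(r)\bigr)$.

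Next I would verify that the assignment $W\mapsto P_W$ is injective, so that the family really has $2^{f_0(P_r^d)}$ members. Every $v\in V_r^d$ is a vertex of $\mathrm{conv}(LP_r^d)$, hence $v\notin\mathrm{conv}(LP_r^d\setminus\{v\})$; since $LP_r^d\setminus W\subseteq LP_r^d\setminus\{v\}$ whenever $v\in W$, this forces $v\notin P_W$ for every $v\in W$. Conversely, if $v\in V_r^d\setminus W$ then $v\in LP_r^d\setminus W\subseteq P_W$. Hence $W=\{v\in V_r^d:\ v\notin P_W\}$ can be recovered from $P_W$, which gives injectivity, and therefore $\bigl|H_r^d(V/d!\geqslant\mathrm{vol}(B_r^d)-c(r))\bigr|\geqslant 2^{f_0(P_r^d)}$.

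Finally I would apply Lemma \ref{barafkbounds}, which gives $f_0(P_r^d)\gg r^{d\frac{d-1}{d+1}}$, so that $\log\bigl|H_r^d(V/d!\geqslant\mathrm{vol}(B_r^d)-c(r))\bigr|\geqslant f_0(P_r^d)\log 2\gg r^{d\frac{d-1}{d+1}}$. Since any polytope counted here lies in $B_r^d$, its normalized volume obeys $V\leqslant d!\,\mathrm{vol}(B_r^d)=d!\frac{\pi^{d/2}}{\Gamma(1+d/2)}r^d$, so $V^{\frac{d-1}{d+1}}\ll r^{d\frac{d-1}{d+1}}\ll\log\bigl|H_r^d(V/d!\geqslant\mathrm{vol}(B_r^d)-c(r))\bigr|$, which is the assertion (all estimates holding for $r$ large, with constants depending only on $d$).

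I do not expect a genuine obstacle: the construction is the same vertex-shaving device already used in the proof of Theorem \ref{relequiv} and in Bárány's work, and the only new observation is that deleting \emph{all} of $V_r^d$ still leaves $B^d_{r-2\sqrt d}$ inside, so the volume loss is only $O(r^{d-1})$. The one point requiring a little care is the injectivity of $W\mapsto P_W$, which is what turns ``$2^{f_0(P_r^d)}$ subsets'' into ``$2^{f_0(P_r^d)}$ distinct polytopes'' and hence makes the counting lower bound rigorous.
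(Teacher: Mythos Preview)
Your proposal is correct and follows essentially the same approach as the paper: both construct the family $\{\mathrm{conv}(LP_r^d\setminus W):W\subseteq V_r^d\}$, invoke the lemma $B^d_{r-2\sqrt d}\subseteq\mathrm{conv}(LP_r^d\setminus V_r^d)$, and set $c(r)=\mathrm{vol}(B_r^d)-\mathrm{vol}(B^d_{r-2\sqrt d})$. Your write-up is in fact more complete than the paper's, which leaves the injectivity of $W\mapsto P_W$ and the final passage from $r^{d\frac{d-1}{d+1}}$ to $V^{\frac{d-1}{d+1}}$ implicit.
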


\begin{proof}
    By the above lemma, we know that for large enough $r$ and any subset $W\subseteq V_r^d$, 
    \begin{equation*}
        \mathrm{conv}\left[(LP_r^d)\backslash W\right]\supseteq B_{r-2\sqrt{d}}^d.
    \end{equation*}
    Define
    \begin{equation*}
        c(r)=\mathrm{vol}(B_r^d)-\mathrm{vol}(B_{r-2\sqrt{d}}^d),
    \end{equation*}
    then $c(r)=O(r^{d-1})=o(r^d)$. So all lattice polytopes having the form $\mathrm{conv}\left[(LP_r^d)\backslash W\right]$, $W\subseteq V_r^d$, will have the volume larger than or equal to $\mathrm{vol}(B_r^d)-c(r)$.
\end{proof}

By the same idea and the construction described in Section 3, it is not hard to prove:
\begin{thm}
    There exists a variable $c(r)=o(r^d)$, such that 
    \begin{equation*}
        V^{\frac{d-1}{d+1}}\ll \log |H_r^d\left(V\right)|,
    \end{equation*}
    where $H_r^d(V)$ means the subset of $H_r^d$ consisting of those polytopes with volume exactly equal to $V\leqslant\mathrm{vol}(B_r^d)-c(r)$.
\end{thm}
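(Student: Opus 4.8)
The plan is to follow the construction of Section~3 (equivalently, Bárány's construction from \cite{Barany1}), dropping everything specific to the $V_{min}$ analysis: since here we only need many \emph{distinct} lattice polytopes in $B^d_r$ of the prescribed volume --- not polytopes in pairwise distinct affine or unimodular classes --- Lemmas~\ref{Vminiff} and \ref{Zdiff} and the count of affine maps between lattice polytopes in a ball play no role. The one genuinely new constraint is that every polytope we produce must lie inside $B^d_r$, and the whole argument is organised around making the base polytope small enough for this.

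First I would fix the base polytope. Given $r$ and a target volume $V\leqslant\mathrm{vol}(B^d_r)-c(r)$ with $c(r):=C_d\, r^{d-1}=o(r^d)$ ($C_d$ a constant chosen below), set $\widehat Q_\rho:=2\,\mathrm{conv}(B^d_\rho\cap\mathbb{Z}^d)$. This is a lattice polytope and, being a dilated lattice ball exactly as Bárány's $Q_\rho=2\,\mathrm{conv}(B^d_\rho\cap\mathbb{R}^d_+\cap\mathbb{Z}^d)$, it has the same favourable features: the caps $\bigtriangleup_{\widehat Q_\rho}(x)$ at its vertices have volume in $[1/d!,b/d!]$, are pairwise disjoint, and satisfy $\widehat Q_\rho\setminus\bigcup_{x\in W}\bigtriangleup_{\widehat Q_\rho}(x)=\mathrm{conv}\big((\widehat Q_\rho\cap\mathbb{Z}^d)\setminus W\big)$ (the dilation puts a non-vertex lattice point in the relative interior of every edge, so shaving a set of vertices cuts off exactly the union of their caps). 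By Lemma~\ref{Qrvertlem}, $\mathrm{vol}(\widehat Q_\rho)=2^d\,\mathrm{vol}(\mathrm{conv}(B^d_\rho\cap\mathbb{Z}^d))=\mathrm{vol}(B^d_1)(2\rho)^d(1+o(1))$, while $\widehat Q_\rho\subseteq B^d_{2\rho}$; and by Lemma~\ref{barafkbounds}, $f_0(\widehat Q_\rho)=f_0(\mathrm{conv}(B^d_\rho\cap\mathbb{Z}^d))\asymp\rho^{d(d-1)/(d+1)}$. Since the volume of $\mathrm{conv}(B^d_\rho\cap\mathbb{Z}^d)$ increases in $\rho$ with increments $\ll\rho^{(d-1)/2}\asymp V^{(d-1)/(2d)}$ (each cap has width $\leqslant\sqrt d$ by Lemma~\ref{Qrvertlem}), one may choose $\rho=(V/\mathrm{vol}(B^d_1))^{1/d}+O(1)$ so that $\widehat Q_\rho$, after shaving off a bounded number of vertices if necessary, satisfies both hypotheses of Proposition~\ref{baranysconstruction} with target volume $V$ --- done exactly as in \cite{Barany1}, and if anything easier, since a full dilated lattice ball carries at least as many vertices for a given volume as its positive-orthant analogue. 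Condition~(2) then forces $\mathrm{vol}(\widehat Q_\rho)=V+\Theta(V^{(d-1)/(d+1)})$, hence $2\rho=(V/\mathrm{vol}(B^d_1))^{1/d}+o(1)$; combining with $V\leqslant\mathrm{vol}(B^d_r)-c(r)=\mathrm{vol}(B^d_1)r^d-C_d r^{d-1}$ gives $2\rho\leqslant r$ for $C_d$ large enough, i.e. $\widehat Q_\rho\subseteq B^d_r$.

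Next I would apply Proposition~\ref{baranysconstruction} with $Q=\widehat Q_\rho$: choosing the subset $X'$ of its vertices to avoid a fixed corner of $\widehat Q_\rho$, it yields more than $\exp\{cV^{(d-1)/(d+1)}\}$ pairwise distinct lattice polytopes $P=\widehat Q_\rho\setminus\bigcup_{x\in W}\bigtriangleup_{\widehat Q_\rho}(x)\subseteq\widehat Q_\rho\subseteq B^d_r$, each of volume in $[V,V+b/d!]$. For each such $P$ there is $t\in\{0,1,\dots,b\}$ with $\mathrm{vol}(P)=V+t/d!$; as in Section~3 one subtracts from that fixed corner a lattice simplex of volume $t/d!$ to obtain $P'$ with $\mathrm{vol}(P')=V$, still inside $B^d_r$. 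Since the corner is untouched by the shaving, $P\mapsto P'$ is injective on each of the $b+1$ classes $\{t=\mathrm{const}\}$, so at least $\frac{1}{b+1}\exp\{cV^{(d-1)/(d+1)}\}$ of the $P'$ are distinct. Hence $|H^d_r(V)|\geqslant\frac{1}{b+1}\exp\{cV^{(d-1)/(d+1)}\}$, i.e. $\log|H^d_r(V)|\gg V^{(d-1)/(d+1)}$. (For this to be non-vacuous $V$, equivalently $\rho$, must be large enough to feed Proposition~\ref{baranysconstruction}; this is the standing ``sufficiently large'' hypothesis.)

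The step I expect to be the main obstacle is precisely the containment $\widehat Q_\rho\subseteq B^d_r$ when $V$ is a constant fraction of $\mathrm{vol}(B^d_r)$: there the base polytope is forced to have circumradius of order $r$, so Bárány's $Q_\rho$ cannot simply be reused --- occupying only the positive orthant, it has circumradius of order $2(V/\mathrm{vol}(B^d_1))^{1/d}$ for volume $V$ and overshoots $B^d_r$ once $V\gtrsim 2^{-d}\mathrm{vol}(B^d_r)$. Replacing it by the full dilated lattice ball $\widehat Q_\rho$, which realises the order-maximal volume among lattice polytopes in $B^d_{2\rho}$ and hence the order-minimal circumradius for its volume, removes the obstruction, and the slack $c(r)=O(r^{d-1})=o(r^d)$ is exactly what absorbs the lower-order errors in the relation between $\rho$ and $V$. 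The remaining points --- verifying that $\widehat Q_\rho$ meets the hypotheses of Proposition~\ref{baranysconstruction} with the correct constants, that the target volume window is hit by the choice of $\rho$, and that the final exact-volume correction is harmless --- are routine and are handled as in \cite{Barany1} and Section~3.
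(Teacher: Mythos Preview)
Your approach is essentially what the paper has in mind: it says only ``by the same idea and the construction described in Section~3'' and relies on combining the full dilated lattice ball (as in the preceding theorem, where $P_r^d$ is used) with Bárány's shaving-plus-correction machinery from Section~3. Your observation that Bárány's positive-orthant polytope $Q_\rho=2\,\mathrm{conv}(B_\rho^d\cap\mathbb{R}^d_+\cap\mathbb{Z}^d)$ cannot reach volumes close to $\mathrm{vol}(B_r^d)$ while staying inside $B_r^d$ is exactly the point, and replacing it by $\widehat Q_\rho=2\,\mathrm{conv}(B_\rho^d\cap\mathbb{Z}^d)$ is the right remedy. The numerics ($\widehat Q_\rho\subseteq B_{2\rho}^d$, $\mathrm{vol}(\widehat Q_\rho)=\mathrm{vol}(B_{2\rho}^d)-O(\rho^{d-1})$, $f_0(\widehat Q_\rho)\asymp\rho^{d(d-1)/(d+1)}$) are all correct, and they deliver $c(r)=O(r^{d-1})=o(r^d)$.

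There is one genuine soft spot. You write that ``as in Section~3 one subtracts from that fixed corner a lattice simplex of volume $t/d!$'', but $\widehat Q_\rho$ has no corner analogous to the origin in $Q_\rho$: every vertex of $\widehat Q_\rho$ sits on a round piece of boundary, and its edge directions are not coordinate-aligned, so you cannot simply cut off $\mathrm{conv}\{\mathbf 0,\mathbf e_1,\ldots,t\mathbf e_d\}$. Proposition~\ref{baranysconstruction} only lands you in the window $[V,V+b/d!]$, and pigeonholing gives many polytopes of some common volume $V+t_0/d!$, not of $V$ itself; the exact-volume step genuinely needs a corner where an adjustable simplex can be removed. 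The fix is the same trick Section~3 already uses for $S_r$: graft a small lattice ``nose'' onto $\widehat Q_\rho$ near an extreme point (e.g.\ at $(2p(\rho),0,\ldots,0)$), producing a vertex whose incident edges contain $\mathbf e_1,\ldots,\mathbf e_d$ and one of length $\geqslant b$. By Lemma~\ref{SrandQr} this costs $O(\rho^{(d-1)/2})$ in volume and $O(1)$ in circumradius, both absorbed into $c(r)=O(r^{d-1})$. With that amendment your argument goes through.
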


So we can see from the above two theorems that:
\begin{enumerate}
    \item There are a lot of lattice polytopes in $B_r^d$ with nearly volume $B_r^d$;
    \item The constraint of a convex body $B_r^d$ hardly effect the order of $\log v(d,m)$.
\end{enumerate}
So we propose the following conjecture.
\begin{conj}
    There exists a variable $c(r)=o(r^d)$, such that
    \begin{equation*}
        \lim_{r\rightarrow\infty}\frac{\log |H_r^d\left(V/d!\geqslant \mathrm{vol}(B_r^d)-c(r)\right)|}{\log H_r^d}=1.
    \end{equation*}
\end{conj}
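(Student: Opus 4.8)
The plan is to reduce the conjecture to a volume-concentration statement for a uniformly random convex lattice polytope in $B_r^d$, and to settle that via the limit-shape theory for lattice polytopes. Combining Theorems \ref{relequiv} and \ref{bounds} one has $\log|H_r^d|\asymp r^{d\frac{d-1}{d+1}}$, and in particular $\log|H_r^d|\to\infty$; also every $P\in H_r^d$ satisfies $\mathrm{vol}(P)\leqslant\mathrm{vol}(P_r^d)=\mathrm{vol}(B_r^d)-O(r^{d-1})$, so ``near-maximal volume'' and ``volume within $o(r^d)$ of $\mathrm{vol}(B_r^d)$'' amount to the same requirement. It therefore suffices to prove that for every fixed $\varepsilon\in(0,1)$,
\begin{equation*}
    \bigl|\{P\in H_r^d:\ \mathrm{vol}(P)<(1-\varepsilon)\,\mathrm{vol}(B_r^d)\}\bigr|=o\bigl(|H_r^d|\bigr)\qquad(r\to\infty).
\end{equation*}
Indeed, applying this with $\varepsilon=1/k$ for each $k$ and extracting an increasing sequence $r_1<r_2<\cdots$, one may take a suitable $k(r)\to\infty$ and put $c(r)=\mathrm{vol}(B_r^d)/k(r)=o(r^d)$; then $|\{P\in H_r^d:\mathrm{vol}(P)\geqslant\mathrm{vol}(B_r^d)-c(r)\}|=(1-o(1))|H_r^d|$, and taking logarithms and dividing by $\log|H_r^d|\to\infty$ gives the asserted limit.

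For the concentration step I would invoke the limit-shape theorem for convex lattice polytopes (Bárány in the plane \cite{Barany1}, together with its higher-dimensional counterparts). In the form needed: for a convex body $\Lambda\subseteq B_r^d$, the logarithm of the number of convex lattice polytopes in $B_r^d$ whose boundary stays within a slowly growing tube about $\partial\Lambda$ is $(\kappa_d+o(1))\,\Omega(\Lambda)$, where $\Omega(\Lambda)=\int_{\partial\Lambda}\kappa(x)^{1/(d+1)}\,d\mathcal{H}^{d-1}(x)$ is the affine surface area and $\kappa_d$ is a dimensional constant; consequently, after rescaling by $1/r$, the uniform random element of $H_r^d$ converges in probability (in Hausdorff distance) to the convex body maximizing $\Omega$ among convex subsets of $B_r^d$. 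By the affine isoperimetric inequality, $\Omega(\Lambda)^{d+1}\leqslant c_d\,\mathrm{vol}(\Lambda)^{d-1}\leqslant c_d\,\mathrm{vol}(B_r^d)^{d-1}=\Omega(B_r^d)^{d+1}$ for every convex $\Lambda\subseteq B_r^d$, with equality throughout exactly when $\Lambda$ is an ellipsoid of volume $\mathrm{vol}(B_r^d)$ contained in $B_r^d$, i.e. exactly when $\Lambda=B_r^d$. (This is precisely where the smoothness of the ball is used: for a body with flat faces, such as the cube of Remark \ref{remrelequiv}, the maximizer of $\Omega$ is a proper subset, and the analogue of the conjecture is false.) Hence the limit shape is $B_r^d$ itself, so a $(1-o(1))$-fraction of the $P\in H_r^d$ have $\mathrm{vol}(P)=(1-o(1))\,\mathrm{vol}(B_r^d)$, which is more than the displayed bound requires.

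The hard part is making this limit-shape input rigorous in arbitrary dimension $d$: the two-sided estimate asserting that the logarithm of the number of convex lattice polytopes near $\Lambda$ equals $(\kappa_d+o(1))\,\Omega(\Lambda)$ --- and in particular the large-deviation upper bound penalizing polytopes whose boundary dips a fixed proportion into the interior of $B_r^d$ --- is classical for $d=2$ but technically demanding for $d\geqslant 3$. A weaker but more elementary substitute already yields part of the conjecture: if $P\in H_r^d$ has $\mathrm{vol}(P)<(1-\varepsilon)\mathrm{vol}(B_r^d)$, enclose $P$ in a polytope $\Lambda\supseteq P$ with $O(K)$ vertices drawn from a fixed set of size $\mathrm{poly}(r)$ and with $\mathrm{vol}(\Lambda)\leqslant(1-\varepsilon/2)\mathrm{vol}(B_r^d)$; there are only $\exp(O(K\log r))=\exp(o(r^{d\frac{d-1}{d+1}}))$ such ``templates'' when $K=K(r)$ grows slowly, and by the Vershik bound \eqref{volnum} each template contains at most $\exp\bigl(C_d\bigl(d!(1-\varepsilon/2)\mathrm{vol}(B_r^d)\bigr)^{\frac{d-1}{d+1}}+O(\log r)\bigr)$ convex lattice polytopes; comparing with $\log|H_r^d|\gg r^{d\frac{d-1}{d+1}}$ (the theorem immediately preceding the conjecture) shows that the ``thin'' polytopes form a vanishing fraction of $H_r^d$ for every $\varepsilon$ above a fixed $\varepsilon_0<1$, i.e. most polytopes in $B_r^d$ have volume at least $(1-\varepsilon_0)\,\mathrm{vol}(B_r^d)$. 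Pushing $\varepsilon_0$ down to $0$ requires knowing $\log|H_r^d|$ up to its leading constant and matching it with the affine-isoperimetric extremal value for the ball --- equivalently, the sharp limit shape --- and this is where I expect the genuine difficulty to lie.
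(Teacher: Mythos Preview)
The statement is a \emph{Conjecture} in the paper; there is no proof given there to compare against. Your text is a research outline rather than a proof, and you acknowledge as much in the final paragraph.

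The central gap is the one you name: the limit-shape assertion $\log|\{P\in H_r^d: P\approx\Lambda\}|=(\kappa_d+o(1))\,\Omega(\Lambda)$, and in particular the matching two-sided estimate $\log|H_r^d|=(\kappa_d+o(1))\,\Omega(B_r^d)$, is established for $d=2$ but not, to my knowledge, for $d\geqslant 3$. Your ``weaker substitute'' via templates and \eqref{volnum} does not escape this obstruction: to conclude that the thin polytopes are $o(|H_r^d|)$ you must compare the implied constant in the Bárány--Vershik upper bound \eqref{volnum} against the implied constant in the lower bound $\log|H_r^d|\gg r^{d(d-1)/(d+1)}$ from Theorem~\ref{bounds}, and the paper determines neither. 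Your claim that the argument already works for all $\varepsilon>\varepsilon_0$ with some $\varepsilon_0<1$ is likewise unjustified: as $\varepsilon\uparrow 1$ the factor $(1-\varepsilon/2)^{(d-1)/(d+1)}$ only drops to $2^{-(d-1)/(d+1)}$, and whether that suffices to beat the unknown ratio of constants is precisely the open point. Note also that your reduction is stronger than required: the conjecture asks only for $\log|H_r^d(\text{thick})|=(1-o(1))\log|H_r^d|$, not for the thin polytopes to be an $o(1)$ fraction; a constructive lower bound on $\log|H_r^d(\text{thick})|$ with the correct leading constant would settle it without any concentration input, but the missing ingredient is again that same constant.
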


\section{Complete invariants for unimodular equivalence in $\mathbb{Z}^2$}
In \cite{Santos1}, F. Santos called the volume vector as an \textit{almost complete} invariant for unimodular equivalence, due to Proposition \ref{santosvolvec}. Now we will show that the volume vector, along with another useful invariant, constitutes a set of complete invariants for unimodular equivalence in 2-dimensions. Before that, we need the following definition.

\begin{defn}
    Let $A=\{p_1\cdots p_n\}$, where $n \geqslant d+1$, be a finite set of points in $\mathbb{Z}^d$. Suppose the equation of the hyperplane formed by $p_{i_1},...,p_{i_d}$ is $\omega_{i_1,...,i_d}x+c_{i_1,...,i_d}=0$, where $\omega_{i_1,...,i_d}\in\mathbb{R}^d$ is a primitive vector and $c_{i_1,...,i_d}$ is an integer. Then we define
    \begin{equation*}
        D_i=(d_{i_1,...,i_d}),\quad i_1,...,i_d\in \mathbb{Z}^{n-1\choose d},
    \end{equation*}
    where
    \begin{equation*}
        i_1,...,i_d\neq i,\quad d_{i_1,...,i_d}=\omega_{i_1,...,i_d}p_i+c_{i_1,...,i_d}.
    \end{equation*}
    The lattice height vector of $A$ is defined as $D=(D_i),\ i=1,...,n$.
\end{defn}

Now we restrict ourselves to the 2-dimensional case.

\begin{thm}\label{thmcompleteinv}
    Let $A=\{p_1\cdots p_n\}$ and $B=\{q_1\cdots q_n\}$ be two finite 2-dimensional subsets in $\mathbb{Z}^2$. Suppose that under a certain specific order of points in $A$ and $B$, $A$ and $B$ have the same volume vector and lattice height vector. Then there exists a unimodular transformation between $A$ and $B$.
\end{thm}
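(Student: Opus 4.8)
The plan is to bootstrap from Proposition \ref{santosvolvec}, which already turns the equality of volume vectors into an affine transformation, and then to use the lattice height vector to force that transformation to be integral. Since $A$ and $B$ have the same volume vector under the given ordering, Proposition \ref{santosvolvec} yields an affine map $\varphi$ with $\det\varphi=\pm1$ and $\varphi(p_i)=q_i$ for all $i$; since $\varphi$ respects affine combinations and bijects $A$ with $B$, it carries the affine sublattice $L(A)$ isomorphically onto $L(B)$. Because the components of the volume vector are exactly the $2\times2$ minors of the difference matrix $(p_i-p_1)_i$, their $\gcd$ equals $k:=[\mathbb{Z}^2:L(A)]$, and as $w^A=w^B$ this forces $[\mathbb{Z}^2:L(B)]=k$ as well. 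If $k=1$ then $\varphi(\mathbb{Z}^2)=\varphi(L(A))=L(B)=\mathbb{Z}^2$, so $\varphi$ is already unimodular and we are done. Hence the whole content is the case $k\geqslant2$, where a priori $\varphi$ only sends $L(A)$ to $L(B)$ and may fail to be integral on the rest of $\mathbb{Z}^2$.

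Next I would use the lattice height vector to recover edge data. In dimension two, for distinct indices $k,i,j$ with $p_k\notin\mathrm{aff}(p_i,p_j)$ one has the ``base times height'' identity
\begin{equation*}
    \bigl|w^A_{k,i,j}\bigr| \;=\; \ell^A_{ij}\cdot\bigl|d^A_{i,j}\bigr|,
\end{equation*}
where $\ell^A_{ij}$ is the lattice length of the segment $[p_i,p_j]$ and $d^A_{i,j}$ is the corresponding component of $D_k$ (the lattice height of $p_k$ over $\mathrm{aff}(p_i,p_j)$), and similarly for $B$. (The per-line ambiguity in the choice of primitive normal $\omega_{i,j}$ only flips each $d_{i,j}$ by a single sign across all $k$, so ``same lattice height vector'' should be read modulo these harmless signs.) Collinear triples are detected by the common zero pattern of $w^A=w^B$, and for any such pair $\{i,j\}$ one may pick a non-collinear third index, which exists since $A$ is $2$-dimensional. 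Comparing the (equal) volume and lattice height vectors then gives $\ell^A_{ij}=\ell^B_{ij}$ for every pair. Writing $p_j-p_i=\ell^A_{ij}u^A_{ij}$ and $q_j-q_i=\ell^B_{ij}u^B_{ij}$ with $u^A_{ij},u^B_{ij}$ primitive in $\mathbb{Z}^2$, and letting $M$ be the linear part of $\varphi$, the identity $M(p_j-p_i)=q_j-q_i$ forces $Mu^A_{ij}=u^B_{ij}$: the linear part of $\varphi$ sends every primitive edge-direction of $A$ to a primitive vector of $\mathbb{Z}^2$. (Dualizing, $(M^{-1})^{T}$ sends every primitive edge-normal to a primitive covector, but in two dimensions $(M^{-1})^{T}$ and $M$ are conjugate by a $90^\circ$ rotation, so this gives nothing beyond the edge-direction statement.)

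The remaining task — and the step I expect to be the main obstacle — is to upgrade ``$M$ sends the finitely many primitive edge-directions $u^A_{ij}$ to primitive vectors, $M(L(A))=L(B)$, and $\det M=\pm1$'' into ``$M\in\mathrm{GL}_2(\mathbb{Z})$''. When some two edge-directions $u^A_{ij},u^A_{kl}$ already form a $\mathbb{Z}$-basis of $\mathbb{Z}^2$ (equivalently $\det(u^A_{ij},u^A_{kl})=\pm1$), this is immediate: $\det(Mu^A_{ij},Mu^A_{kl})=\pm1$ with both images primitive, so $M$ carries a basis to a basis. The difficulty is that when $A$ sits deep inside a proper sublattice no such pair need exist, so one must instead work through the structure of the finite abelian group $\mathbb{Z}^2/L(A)$: after composing with unimodular transformations on both sides, put $L(A)$ and $L(B)$ into a common Hermite/Smith normal position, say $L(A)=L(B)=\mathbb{Z}a\mathbf{e}_1\oplus\mathbb{Z}b\mathbf{e}_2$ with $a\mid b$ and $ab=k$, so that $\varphi$ becomes a determinant-$\pm1$ rational affine map preserving this sublattice. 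One then has to exploit that the lattice height vector also pins down the lattice lengths of segments joining points in different cosets of $L(A)$, not merely within $L(A)$, to rule out the off-integer ``shear'' components that a non-unimodular such $\varphi$ would have to carry — keeping careful track of which primitive directions are actually realized by edges of $A$ is exactly where the real work sits. Once $M\in\mathrm{GL}_2(\mathbb{Z})$ is established, $\varphi$ is a unimodular transformation with $\varphi(p_i)=q_i$, which is the desired conclusion.
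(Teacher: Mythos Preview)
Your approach diverges from the paper's and carries a real gap precisely where you flag it. The paper does \emph{not} try to promote the global affine map coming from Proposition~\ref{santosvolvec} to a unimodular one. Instead it localizes: it first isolates a single triangle $A_1=\{p_1,p_2,p_3\}$ and $B_1=\{q_1,q_2,q_3\}$ and proves a separate lemma (Lemma~\ref{lemcompleteinv}) that two lattice triangles with the same area and the same three lattice heights are unimodularly equivalent. That lemma is proved by reducing each triangle to a canonical position --- put one vertex at $\mathbf 0$, send another to the positive $x$-axis, then shear the third into the first quadrant with minimal $x$-coordinate --- and observing that the three lattice heights then pin down all coordinates. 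Once a unimodular $\sigma$ with $\sigma(p_i)=q_i$ for $i\le 3$ is in hand, the equality of the full volume vectors forces the affine coordinates of every $p_k$ relative to $p_1,p_2,p_3$ to match those of $q_k$ relative to $q_1,q_2,q_3$ (exactly the computation from Theorem~\ref{thmaffineiff}), so $\sigma(p_k)=q_k$ for all $k$. The lattice height vector is thus used only on one triangle, and the global extension is pure affine bookkeeping.

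Your route --- extract all lattice edge-lengths via $|w_{k,i,j}|=\ell_{ij}\,|d_{i,j}|$, deduce that the linear part $M$ sends every primitive edge-direction to a primitive vector, and then try to force $M\in\mathrm{GL}_2(\mathbb{Z})$ --- stalls because those primitive directions need not contain a $\mathbb{Z}$-basis even when $A$ is a triangle: for $A=\{(0,0),(1,3),(4,4)\}$ the primitive edge-directions are $(1,3),(3,1),(1,1)$ and every pairwise determinant is $\pm 2$ or $\pm 8$. Your fallback plan (normalize $L(A),L(B)$ to a common Smith form and chase the residual shear) is vague, and it is not clear what additional leverage the lattice heights give you beyond the edge-lengths you already extracted; the heights $d_{i,j}$ of the \emph{remaining} points over the line $p_ip_j$ are already determined by $w$ and $\ell_{ij}$ via the same base-times-height identity, so they carry no new information once all $\ell_{ij}$ are known. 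In short, the gap is genuine, and the clean fix is the paper's: prove the triangle case directly by reduction to a normal form, then extend.
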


We use the following lemma to prove the above theorem first.

\begin{lem}\label{lemcompleteinv}
    Let $A=\{\mathbf{0},p_1,p_2\}$ and $B=\{\mathbf{0},q_1,q_2\}$ be two simplices in 2-dimensional space. If $A$ and $B$ have the same volume and lattice height vector (where, when calculating the lattice height, it is always assumed that the normal vector of the face points towards the side of the simplex, standardizing the sign of the expression), then $A$ and $B$ are unimodularly equivalent.
\end{lem}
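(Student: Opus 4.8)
The plan is to reduce the statement to an explicit computation in $\mathbb{Z}^2$ by first normalizing the two triangles and then using the lattice height data to pin down the remaining unimodular freedom. First I would observe that both $A$ and $B$ have a vertex at $\mathbf{0}$, and that the common volume vector already forces $\det(p_1,p_2) = \det(q_1,q_2) =: k$, so the two triangles have the same normalized area $|k|$. By Proposition \ref{santosvolvec}, having the same volume vector already guarantees a \emph{unimodular affine} transformation $\tau$ with $\tau(A) = B$; the whole point of the lemma is to upgrade this to a genuine unimodular transformation (determinant $+1$, fixing no extra translation is needed once we track the vertex $\mathbf{0}$). Equivalently, after composing with the linear map sending $p_i \mapsto q_i$, the only obstruction is that this map may have determinant $\pm k / k = \pm 1$ but may fail to be \emph{integral}, i.e. it lies in $\mathrm{GL}_2(\mathbb{Q})$ and we must show the hypotheses force it into $\mathrm{GL}_2(\mathbb{Z})$.

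The key step is to extract, from the lattice height vector, the primitive edge directions. For the triangle $A = \{\mathbf{0}, p_1, p_2\}$, the three edges lie on lines through pairs of vertices; the lattice height of the third vertex above each such line is exactly $d!$ times the area divided by the lattice length of that edge, with sign fixed by the stated orientation convention. Concretely, the height of $\mathbf{0}$ above the line through $p_1, p_2$ equals $k$ divided by the lattice length $\ell_{12}$ of the segment $[p_1,p_2]$, and similarly for the other two edges. Thus from the common lattice height vector I can read off that $A$ and $B$ have the same triple of edge lattice lengths $(\ell_{12}, \ell_{01}, \ell_{02})$, positioned at the corresponding edges. Now I would put $A$ into a normal form: apply a unimodular transformation so that $p_1 = (\ell_{01}, 0)$ (possible since $p_1 - \mathbf{0}$ is $\ell_{01}$ times a primitive vector, and any primitive vector is the image of $\mathbf{e}_1$ under a unimodular map), and then $p_2 = (a, b)$ with $b = k/\ell_{01} > 0$ after possibly composing with $\mathrm{diag}(1,-1)$... wait — determinant $-1$ is not allowed, so instead I orient via the area sign, which is already matched. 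Reducing $a$ modulo $\ell_{01}$ by the shear $\begin{pmatrix}1 & t\\ 0 & 1\end{pmatrix}$ puts $a$ into the range $0 \le a < \ell_{01}$. The residue $a \bmod \ell_{01}$ and the value $\ell_{02}$ (the lattice length of $[\mathbf{0}, p_2]$, i.e. $\gcd(a,b)$) are both determined by the invariants, and one checks they determine $a$ uniquely in the allowed range. Applying the same normalization to $B$ yields the identical normal form, so $A$ and $B$ are unimodularly equivalent via the composite.

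The main obstacle I anticipate is \textbf{bookkeeping of signs and the determinant-$+1$ restriction}: unimodular transformations here have determinant $\pm 1$ (per the paper's Definition in the introduction, "determinant $\pm 1$"), so in fact the orientation issue dissolves and I may use $\mathrm{diag}(1,-1)$ freely; I would state this explicitly at the outset to avoid the awkwardness above. The genuinely delicate point is verifying that the lattice height vector, \emph{with the stated sign convention}, really does recover the edge lattice lengths with their correct incidences — this requires carefully matching the indexing $d_{i_1,\dots,i_d} = \omega_{i_1,\dots,i_d} p_i + c_{i_1,\dots,i_d}$ against the geometric picture and confirming that the sign normalization ("normal vector points toward the simplex") makes all three heights positive and hence unambiguous. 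Once that dictionary is in place, the normal-form argument is a short and routine computation in $\mathbb{Z}^2$.
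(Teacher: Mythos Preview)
Your normal-form strategy is essentially the one the paper uses: fix the origin, send one edge to the positive $x$-axis, and use the remaining shear freedom to reduce the third vertex. The fatal gap is the sentence ``one checks they determine $a$ uniquely in the allowed range.'' First a minor slip: the shear fixing $p_1=(\ell_{01},0)$ sends $(a,b)$ to $(a+tb,\,b)$, so $a$ is reduced modulo $|b|$, not modulo $\ell_{01}$. The substantive problem is that, even in the correct range $0\le a<|b|$, the three edge lattice lengths $\ell_{01}$, $\ell_{02}=\gcd(a,b)$, $\ell_{12}=\gcd(a-\ell_{01},b)$ together with the area do \emph{not} determine $a$.

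Concretely, in your coordinates take $\ell_{01}=5$, $b=5$, and compare $a=1$ with $a=2$: the triangles $A=\{\mathbf{0},(5,0),(1,5)\}$ and $B=\{\mathbf{0},(5,0),(2,5)\}$ both have normalized area $25$, edge lattice lengths $(\ell_{01},\ell_{02},\ell_{12})=(5,1,1)$, and hence identical lattice height vector $(D_0,D_1,D_2)=(25,25,5)$. Both are already in reduced form. Yet a direct check of all six vertex correspondences shows that no unimodular affine map carries $A$ to $B$; every candidate linear part acquires a denominator~$5$. So your uniqueness claim is false, and indeed the lemma as stated appears to be false. The paper's own argument contains exactly the same gap: it infers $p_1'=q_1'$ from the equality $y_p=y_q$ of the $y$-coordinates of the primitive direction vectors, but here $(1,5)$ and $(2,5)$ are both primitive with $y$-coordinate $5$ while remaining distinct after the paper's normalization.
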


\begin{proof}[Proof of Theorem \ref{thmcompleteinv}]
    The main idea is to notice that the volume vector can encode the affine (or linear) relationship between different vertices.
    
    If $A$ and $B$ are defined as in Theorem \ref{thmcompleteinv}, then it can be deduced that $A_1\coloneqq \{p_1,\cdots,p_3\}$ and $B_1\coloneqq \{q_1,\cdots,q_3\}$ also have the same volume and lattice height vector. So by considering Lemma \ref{lemcompleteinv} and noticing that the volume and lattice height vector are invariant under lattice translation, we know that $A_1$ is unimodular equivalent to $B_1$, i.e., there exists a unimodular matrix $U$ and an integer point $\mathbf{v}$, such that $A_1U+\mathbf{v}=B_1$. Now it is left to show that $p_iU+\mathbf{v}=q_i$ for other $i=4,\cdots,n$.

    Recall that in the proof of Theorem \ref{thmaffineiff}, if we assume that
    \begin{gather*}
        p_i = a_1^i p_1 + \cdots + a_{d+1}^i p_{d+1},\ a_1^i + \cdots + a_{d+1}^i = 1,\\
        q_i = b_1^i q_1 + \cdots + b_{d+1}^i q_{d+1},\ b_1^i + \cdots + b_{d+1}^i = 1,\\
        i = 1, \cdots, n,
    \end{gather*}
    then by the assumption that $A$ and $B$ have the same volume vector (obviously the same primitive volume vector as well), we have
    \begin{equation*}
        a_j^k = b_j^k, \quad k = 1, \cdots, n, \quad j = 1, \cdots, d+1.
    \end{equation*}
    Since unimodular transformations preserve affine combinations, we know that $p_iU+\mathbf{v}=q_i$ for other $i=4,\cdots,n$, and we are done.
\end{proof}

Now we turn to prove Lemma \ref{lemcompleteinv}.

\begin{proof}[Proof of Lemma \ref{lemcompleteinv}]
    The main idea is to "reduce" the simplices $A$ and $B$. It is always possible to apply suitable unimodular transformations $U_p$ and $U_q$ such that the origin remains fixed, $p'_2\coloneqq p_2U_p$ and $q'_2\coloneqq q_2U_q$ lie on the positive $x$-axis, and $p'_1\coloneqq p_1U_p$ and $q'_1\coloneqq q_1U_q$ lie in the first quadrant (including the $y$-axis) with the smallest possible $x$-coordinate. Let the transformed sets be $A'=\{\mathbf{0},p'_1,p'_2\}$ and $B'=\{\mathbf{0},q'_1,q'_2\}$. We now claim that $p'_1=q'_1$ and $p'_2=q'_2$, implying that $A$ and $B$ are equivalent to the same polygon. Thus, the case $d=2$ is proved.

    Now, we prove that $p'_1=q'_1$ and $p'_2=q'_2$. Since $A$ and $B$ have the same lattice height vector, and lattice height is a unimodular invariant, the height of $p'_1$ to the edge $\mathbf{0}p'_2$ is the same as the height of $q'_1$ to the edge $\mathbf{0}q'_2$, denoted by $d_\perp$. Additionally, since the triangles formed by $A$ and $B$ have the same area, $len(\mathbf{0}p'_2)=len(\mathbf{0}q'_2)$, and therefore $p'_2=q'_2$. Let $p'_2=q'_2\coloneqq(x,0)$, and let the $y$-coordinate of the primitive vector in the direction of $p'_1$ be $y_p$, and the $y$-coordinate of the primitive vector in the direction of $q'_1$ be $y_q$. Then, the lattice height of $p'_2$ to the edge $\mathbf{0}p'_1$ is $x\cdot y_p$, and the lattice height of $q'_2$ to the edge $\mathbf{0}q'_1$ is $x\cdot y_q$. Since these two lattice heights must be equal, we have $y_p=y_q$. Consequently, $p'_1=q'_1$ if and only if the primitive vectors in the directions of $p'_1$ and $p'_2$ have the same $y$-coordinate. Therefore, $p'_1=q'_1$.
\end{proof}

\begin{rem}
    The above discussion shows that the volume vector and lattice height vector are two important and useful invariants for unimodular equivalence. This is not surprising: The volume vector implies the information about the affine relationship between vertices, and the lattice height vector implies the lattice structure of the polytope; At the same time, unimodular transformations can be characterised as a group of special affine transformations, which preserve integer lattice $\mathbb{Z}^d$.
\end{rem}

\end{document}